\newcommand{\colim@}[2]{%
  \vtop{\m@th\ialign{##\cr
    \hfil$#1\operator@font colim$\hfil\cr
    \noalign{\nointerlineskip\kern1.5\ex@}#2\cr
    \noalign{\nointerlineskip\kern-\ex@}\cr}}%
}
\newcommand{\colim}{%
  \mathop{\mathpalette\colim@{}}
  }
\newtheorem{thm}{Theorem}[section]
\newtheorem{cor}[thm]{Corollary}
\newtheorem{lem}[thm]{Lemma}
\newtheorem{exm}[thm]{Example}
\newtheorem{prop}[thm]{Proposition}
\theoremstyle{definition}
\newtheorem{defn}[thm]{Definition}
\theoremstyle{remark}
\newtheorem{rem}[thm]{\bf Remark}
\numberwithin{equation}{section}
\begin{document}
\title[Differential graded enhancements of singularity categories]{Differential graded enhancements of singularity categories}
\author[Chen, Wang] {Xiao-Wu Chen, Zhengfang Wang}

\subjclass[2010]{16E05, 18G80, 16S88}
\date{revised version, \today}

\thanks{xwchen$\symbol{64}$mail.ustc.edu.cn, zhengfangw$\symbol{64}$gmail.com}
\keywords{singularity category, dg category, Vogel dg category, singular Yoneda dg category}

\maketitle

\dedicatory{}%
\commby{}%

\begin{abstract}
The  singularity category of a ring detects the homological singularity of the given ring, and appears in many different contexts. We describe two different dg enhancements of the singularity category, that is, the Vogel dg category and the singular Yoneda dg category. These two dg enhancements turn out to be quasi-equivalent. We report some progress on the Singular Presilting Conjecture.
\end{abstract}

\section{Introduction}

Let $R$ be a left coherent ring. Denote by $\mathbf{D}^b(\text{$R$-mod})$ the bounded derived category of the abelian category consisting of finitely presented  left $R$-modules.   The singularity category $\mathbf{D}_{\rm sg}(R)$ of $R$ is defined to be the Verdier quotient category \cite{Ver} of $\mathbf{D}^b(\text{$R$-mod})$ by the full triangulated subcategory of perfect complexes. The singularity category detects the homological singularity of $R$ in the following sense: if $R$ has finite global dimension, then $\mathbf{D}_{\rm sg}(R)$ vanishes.

The  singularity category was first introduced in \cite{Buc} and then rediscovered in \cite{Orl} in the geometric setting, which is motivated by the homological mirror symmetry conjecture \cite{Kon}. The singularity category appears naturally in a number of different subjects, such as matrix factorizations \cite{Eis}, integral and modular representations of finite groups \cite{Buc, Ric}, Gorenstein homological algebra \cite{ABr, Buc, EJ1}, noncommutative algebraic geometry \cite{Orl2},  weighted projective lines \cite{Len} and cluster categories \cite{AIR}.

We will abbreviate `differential graded' as dg. By a dg enhancement \cite{BK2} of a triangulated category, we mean a pretriangulated dg category whose zeroth cohomology yields the given triangulated category. It is well known that any triangulated category appearing naturally in algebra has a dg enhancement; compare \cite[Subsection~3.6]{Kel06}. Moreover, the dg enhancement contains more information and has more  invariants such as the Hochschild cohomology ring.

The dg singularity category \cite{Kel18, BRTV, BrDy} of $R$ is defined to be the dg quotient category of the bounded dg derived category by the full dg subcategory of perfect complexes. It is a canonical dg enhancement of $\mathbf{D}_{\rm sg}(R)$. We mention that the dg singularity category plays a crucial role in the study \cite{Kel18, HK, JM} of the Donovan-Wemyss conjecture on isolated compound Du Val singularities.

Since the dg singularity category is defined as a dg quotient category \cite{Kel99, Dri}, it is  quite hard to handle. Therefore,  it is of interest to  have more explicit dg models. In this paper, we describe two different  dg enhancements of the singularity category, that is, the \emph{Vogel dg category} and the \emph{singular Yoneda dg category}.

 The idea of the Vogel dg category is implicit in  an unpublished paper of Vogel, which generalizes the Tate cohomology \cite{Tate, Buc} of Gorenstein rings to arbitrary rings; see also \cite{Goi, BC, Mis}.  The construction of the singular Yoneda dg category \cite{CW} relies on  bar resolutions and noncommutative differential forms.

The paper is organized as follows. In Section~2, we obtain  semi-orthogonal decompositions consisting of Verdier quotient categories of homotopy categories of complexes; see Propositions~\ref{prop:semi-ortho} and ~\ref{prop:semi-ortho2}. We define the Vogel dg category in Section~3, whose zeroth cohomology is closely related to the quotient categories studied in Section~2 via an orthogonal decomposition; see Theorem~\ref{thm:Vogel}.

In Section~4, we first point out in Proposition~\ref{prop:sing-V} that the Vogel dg category is quasi-equivalent to the dg singularity category. We give a full proof of a slight generalization of Buchweitz's theorem on the singularity category of a Gorenstein ring; see Theorem~\ref{thm:Buc}. In Subsection~4.3, we report some progress in \cite{CLZZ} on the Singular Presilting Conjecture \cite{CHQW, IY}, which implies the well-known Auslander-Reiten Conjecture \cite{AR75}.

  In Section~5, we recall the construction of the singular Yoneda dg category \cite{CW}.  Proposition~\ref{prop:sing-SY} states that the singular Yoneda dg category is quasi-equivalent to the dg singularity category. In summary, although the Vogel dg category and the singular Yoneda dg category are quite different, they turn out to be quasi-equivalent.

We emphasize that almost all the results in Sections~4 and 5 exist already in the literature. Some of the results in Sections~2 and 3 seem to be new.

\section{Quotient categories and singularity categories}

In this section, we obtain  semi-orthogonal decompositions of certain Verdier quotient categories of homotopy categories of complexes; see Propositions~\ref{prop:semi-ortho} and ~\ref{prop:semi-ortho2}.

 In what follows, we assume that $\mathfrak{a}$ is an additive category. A (cochain) complex in $\mathfrak{a}$ is usually denoted by $X=(X^n, d_X^n)_{n\in \mathbb{Z}}$, where the differentials $d_X^n\colon X^n\rightarrow X^{n+1}$ satisfy $d_X^{n+1}\circ d_X^n=0$. For each integer $n$, we denote by $\Sigma^n(X)$ the degree $n$ shift of $X$. Denote by $C(\mathfrak{a})$ and $\mathbf{K}(\mathfrak{a})$ the category of complexes in $\mathfrak{a}$ and the homotopy category, respectively.

 For a complex $X$ and $n\in \mathbb{Z}$, we have the following brutal truncations:
 $$\sigma_{\geq n}(X)= \cdots \rightarrow 0\rightarrow X^n \stackrel{d_X^n}\rightarrow X^{n+1}\stackrel{d_X^{n+1}}\rightarrow X^{n+1}\rightarrow \cdots$$
 and
 $$\sigma_{<n}(X)=\cdots \rightarrow X^{n-3}\stackrel{d_X^{n-3}}\rightarrow X^{n-2}\stackrel{d_X^{n-2}}\rightarrow X^{n-1}\rightarrow 0\rightarrow \cdots.$$
 Moreover, we have a canonical exact triangle in $\mathbf{K}(\mathfrak{a})$:
 \begin{align}\label{tri:can}
 \sigma_{\geq n}(X)\xrightarrow{{\rm inc}_n} X\xrightarrow{{\rm pr}_n}  \sigma_{<n}(X)\longrightarrow \Sigma \sigma_{\geq n}(X).
 \end{align}
 Here, ${\rm inc}_n$ and ${\rm pr}_n$ denote the corresponding inclusion and projection, respectively. The unnamed cochain map $\sigma_{<n}(X)\rightarrow \Sigma\sigma_{\geq n}(X)$ is induced by the differential $d_X^{n-1}\colon X^{n-1}\rightarrow X^n$.

 A complex $X$ is bounded-above (\emph{resp}., bounded-below) if $X^n=0$ for $n$ (\emph{resp}., $-n$) sufficiently large. A complex is bounded if it is both bounded-above and bounded-below. We denote by $C^{-}(\mathfrak{a})$, $C^{+}(\mathfrak{a})$  and $C^b(\mathfrak{a})$ the full subcategories of $C(\mathfrak{a})$ formed by bounded-above, bounded-below and bounded complexes, respectively. The corresponding homotopy categories are denoted by $\mathbf{K}^{-}(\mathfrak{a})$, $\mathbf{K}^{+}(\mathfrak{a})$  and $\mathbf{K}^b(\mathfrak{a})$, respectively.

In what follows, we consider the Verdier quotient category $\mathbf{K}(\mathfrak{a})/\mathbf{K}^b(\mathfrak{a})$.

\begin{lem}\label{lem:quot}
Let $X$ and $Y$ be two complexes. The following statements hold.
\begin{enumerate}
\item Assume that $Y$ is bounded-above. Then we have a natural isomorphism
$${\rm Hom}_{\mathbf{K}(\mathfrak{a})/\mathbf{K}^b(\mathfrak{a})}(X, Y)\simeq \colim\limits_{n\to +\infty} {\rm Hom}_{\mathbf{K}(\mathfrak{a})}(X, \sigma_{\leq -n}(Y)),$$
where the structure maps in the colimit are induced by the projections $\sigma_{\leq -n}(Y)\rightarrow \sigma_{\leq -(n+1)}(Y)$.
\item Assume that $X$ is bounded-below. Then we have a natural isomorphism
$${\rm Hom}_{\mathbf{K}(\mathfrak{a})/\mathbf{K}^b(\mathfrak{a})}(X, Y)\simeq \colim\limits_{n\rightarrow +\infty} {\rm Hom}_{\mathbf{K}(\mathfrak{a})}(\sigma_{\geq n}(X), Y),$$
where the structure maps are induced by the inclusions $\sigma_{\geq n+1}(X)\rightarrow \sigma_{\geq n}(X)$.
\end{enumerate}
\end{lem}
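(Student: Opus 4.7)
The plan is to prove part (1); part (2) is dual, obtained by interchanging the roles of source and target and by swapping $\sigma_{\leq -n}$ with $\sigma_{\geq n}$.

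First I would set up the comparison map. Applying the canonical triangle \eqref{tri:can} with cutoff $-n+1$ yields
\[
\sigma_{\geq -n+1}(Y) \longrightarrow Y \xrightarrow{p_n} \sigma_{\leq -n}(Y) \longrightarrow \Sigma\,\sigma_{\geq -n+1}(Y),
\]
whose leftmost term is bounded since $Y$ is bounded-above, so $p_n$ becomes invertible in $\mathbf{K}(\mathfrak{a})/\mathbf{K}^b(\mathfrak{a})$. Composition with $p_n^{-1}$ inside the quotient defines maps $\mathrm{Hom}_{\mathbf{K}(\mathfrak{a})}(X, \sigma_{\leq -n}(Y)) \to \mathrm{Hom}_{\mathbf{K}(\mathfrak{a})/\mathbf{K}^b(\mathfrak{a})}(X, Y)$. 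Using $p_{n+1} = \mathrm{pr} \circ p_n$, where $\mathrm{pr}\colon \sigma_{\leq -n}(Y) \to \sigma_{\leq -n-1}(Y)$ is the structure map, these are compatible with the colimit structure maps and assemble into the desired comparison map.

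For surjectivity, I would represent $f \in \mathrm{Hom}_{\mathbf{K}/\mathbf{K}^b}(X,Y)$ as a roof $X \xleftarrow{s} Z \xrightarrow{g} Y$ with $\mathrm{cone}(s) \in \mathbf{K}^b(\mathfrak{a})$ supported in some interval $[a,b]$. Then $\Sigma^{-1}\mathrm{cone}(s)$ is supported in $[a+1,b+1]$, while $\sigma_{\leq -n}(Y)$ is supported in $(-\infty,-n]$; for $n$ large enough the two supports are disjoint, so there are no nonzero chain maps between them, and hence $\mathrm{Hom}_{\mathbf{K}(\mathfrak{a})}(\Sigma^{-1}\mathrm{cone}(s), \sigma_{\leq -n}(Y)) = 0$. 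The long exact sequence attached to the triangle $Z \xrightarrow{s} X \to \mathrm{cone}(s) \to \Sigma Z$ then forces $s^{*}$ to be surjective, so $p_n \circ g$ lifts to some $\tilde{f}_n \in \mathrm{Hom}_{\mathbf{K}(\mathfrak{a})}(X, \sigma_{\leq -n}(Y))$ satisfying $\tilde{f}_n \circ s = p_n \circ g$; in the quotient $\tilde{f}_n = p_n f$, so $p_n^{-1}\tilde{f}_n = f$.

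For injectivity, suppose $\tilde{f}\colon X \to \sigma_{\leq -n}(Y)$ dies under the comparison map; then $\tilde{f} = 0$ in $\mathrm{Hom}_{\mathbf{K}/\mathbf{K}^b}(X, \sigma_{\leq -n}(Y))$, and the calculus of right fractions furnishes $t\colon Z \to X$ with $\mathrm{cone}(t) \in \mathbf{K}^b(\mathfrak{a})$ and $\tilde{f} \circ t = 0$ in $\mathbf{K}(\mathfrak{a})$. Factoring $\tilde{f}$ through $\mathrm{cone}(t)$ as $\tilde{f} = h \circ \pi$, for $m$ sufficiently large the supports of $\mathrm{cone}(t)$ and $\sigma_{\leq -m}(Y)$ become disjoint, so the composite $\mathrm{cone}(t) \xrightarrow{h} \sigma_{\leq -n}(Y) \to \sigma_{\leq -m}(Y)$ has no nonzero chain representative and $\tilde{f}$ dies at stage $m$ in the colimit. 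The main technical point throughout is precisely this disjoint-support argument, which converts Hom-vanishing into a strict, degreewise statement once the truncation is deep enough; everything else is a routine use of the Verdier calculus of fractions.
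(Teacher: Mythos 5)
Your argument is correct, and it reaches the same conclusion by a more hands-on route than the paper. The paper's proof starts from Verdier's general formula expressing ${\rm Hom}_{\mathbf{K}(\mathfrak{a})/\mathbf{K}^b(\mathfrak{a})}(X,Y)$ as a filtered colimit over the coslice category of morphisms $Y\to Y'$ with bounded cone, and then shows that the chain of brutal truncation projections ${\rm pr}_{-n}\colon Y\to \sigma_{<-n}(Y)$ is cofinal in that index category, citing the Kashiwara--Schapira cofinality criterion; the two inputs are exactly that every morphism from a bounded complex into $Y$ factors through $\sigma_{\geq -n}(Y)$ for $n$ large, and that ${\rm Hom}_{\mathbf{K}(\mathfrak{a})}({\rm Cone}(s),\sigma_{<-n}(Y))=0$ for $n$ large. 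You instead build the comparison map directly, using that each $p_n\colon Y\to\sigma_{\leq -n}(Y)$ becomes invertible in the quotient, and check surjectivity and injectivity by the calculus of fractions together with the disjoint-support vanishing ${\rm Hom}_{\mathbf{K}(\mathfrak{a})}(B,\sigma_{\leq -n}(Y))=0$ for $B$ bounded and $n\gg 0$ --- which is the same vanishing the paper's cofinality argument rests on. Your version is more elementary and self-contained (no appeal to the abstract cofinality machinery), at the cost of being longer and of making naturality slightly less automatic; the paper's version is shorter given the cited results. One small point to make explicit in your write-up: the cone of a denominator is only isomorphic in $\mathbf{K}(\mathfrak{a})$ to a bounded complex, so the disjoint-support argument should be applied to a bounded representative and transported along the homotopy equivalence, which is harmless since Hom in $\mathbf{K}(\mathfrak{a})$ is invariant under isomorphism.
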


\begin{proof}
For (1), we recall from \cite[(2.2.1.1)]{Ver} an isomorphism
$${\rm Hom}_{\mathbf{K}(\mathfrak{a})/\mathbf{K}^b(\mathfrak{a})}(X, Y)\simeq {\rm colim}_\mathcal{I}\; {\rm Hom}_{\mathbf{K}(\mathfrak{a})}(X, Y'),$$
where the colimit is taken over a filtered category $\mathcal{I}$. Here, $\mathcal{I}$ is the full subcategory of the coslice category $(Y\downarrow \mathbf{K}(\mathfrak{a}))$ formed by morphisms $s\colon Y\rightarrow Y'$ such that its mapping cone ${\rm Cone}(s)$ is isomorphic to a bounded complex in $\mathbf{K}(\mathfrak{a})$.

Since $Y$ is bounded-above, the projections ${\rm pr}_{-n}\colon Y\rightarrow \sigma_{<-n}(Y)$ are objects in $\mathcal{I}$. Denote by $\mathcal{J}$ the non-full subcategory of $\mathcal{I}$ formed by the projections ${\rm pr}_{-n}$ and the natural morphisms ${\rm pr}_{-n}\rightarrow {\rm pr}_{-(n+1)}$. Since any morphism from a bounded complex to $Y$ factors through the inclusion $\sigma_{\geq -n}(Y)\rightarrow Y$ for $n$ sufficiently large, it follows that any object $s\colon Y\rightarrow Y'$ in $\mathcal{I}$ admits a morphism to ${\rm pr}_{-n}$ for $n$ sufficiently large. Moreover, the morphism $Y'\rightarrow \sigma_{<-n}(Y)$, which yields the morphism $s\rightarrow {\rm pr}_{-n}$ in  $\mathcal{I}$, is unique in $\mathbf{K}(\mathfrak{a})$, since ${\rm Hom}_{\mathbf{K}(\mathfrak{a})}({\rm Cone}(s), \sigma_{<-n}(Y))=0$ for $n$ sufficiently large. This uniqueness implies that $\mathcal{J}$  is cofinal in $\mathcal{I}$; see \cite[Definition~2.5.1]{KS}. Then we infer (1) by \cite[Proposition~2.5.2(v)]{KS}. Dually, we prove (2).
\end{proof}

\begin{cor}\label{cor:ortho}
Let $X$ be a bounded-below complex and $Y$ a bounded-above complex. Then we have
$${\rm Hom}_{\mathbf{K}(\mathfrak{a})/\mathbf{K}^b(\mathfrak{a})}(X, Y)=0.$$
\end{cor}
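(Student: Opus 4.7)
The plan is to reduce the claim to the colimit description of Hom in the Verdier quotient provided by Lemma~\ref{lem:quot}. Since $Y$ is bounded-above, part~(1) of that lemma applies and yields a natural isomorphism
$$\mathrm{Hom}_{\mathbf{K}(\mathfrak{a})/\mathbf{K}^b(\mathfrak{a})}(X, Y) \simeq \colim_{n\to +\infty}\, \mathrm{Hom}_{\mathbf{K}(\mathfrak{a})}(X, \sigma_{\leq -n}(Y)).$$
It therefore suffices to exhibit a cofinal set of indices $n$ for which the individual term on the right-hand side is zero.

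The key observation is that the boundedness hypotheses on $X$ and $Y$ force the complexes $X$ and $\sigma_{\leq -n}(Y)$ to be supported in disjoint sets of degrees once $n$ is large. More precisely, since $X$ is bounded-below, fix $N \in \mathbb{Z}$ with $X^i = 0$ for all $i < N$. For every integer $n > -N$ we have $-n < N$, so $\sigma_{\leq -n}(Y)$ is concentrated in degrees $\leq -n < N$ whereas $X$ is concentrated in degrees $\geq N$. There is then no non-zero cochain map from $X$ to $\sigma_{\leq -n}(Y)$ at the level of $C(\mathfrak{a})$, and a fortiori no non-zero morphism in $\mathbf{K}(\mathfrak{a})$. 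Passing to the colimit gives the desired vanishing.

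There is really no substantive obstacle here: the corollary is an immediate bookkeeping consequence of Lemma~\ref{lem:quot} together with the support considerations above. If one preferred, the dual route would be to invoke Lemma~\ref{lem:quot}(2), writing the Hom group as $\colim_n \mathrm{Hom}_{\mathbf{K}(\mathfrak{a})}(\sigma_{\geq n}(X), Y)$; since $X$ is bounded-below, the brutal truncation $\sigma_{\geq n}(X)$ becomes the zero complex for $n$ sufficiently large, so the colimit again vanishes. Either version gives the result with no further work.
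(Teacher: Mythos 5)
Your main argument is correct and is essentially the paper's own proof: both reduce via Lemma~\ref{lem:quot}(1) to the colimit of ${\rm Hom}_{\mathbf{K}(\mathfrak{a})}(X, \sigma_{\leq -n}(Y))$ and observe that these groups vanish for $n$ large because $X$ and $\sigma_{\leq -n}(Y)$ are concentrated in disjoint ranges of degrees. One caveat on your optional aside: with the paper's conventions, bounded-below means $X^i=0$ for $i$ sufficiently \emph{negative}, so $\sigma_{\geq n}(X)$ does \emph{not} become the zero complex for large $n$; the dual route via Lemma~\ref{lem:quot}(2) still works, but again by a disjoint-support argument ($\sigma_{\geq n}(X)$ lives in degrees $\geq n$ while the bounded-above complex $Y$ lives in degrees below any such $n$ once $n$ is large), not by the truncation vanishing.
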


\begin{proof}
We have ${\rm Hom}_{\mathbf{K}(\mathfrak{a})}(X, \sigma_{\leq -n}(Y))=0$ if $n$ is sufficiently large. Then the result follows immediately from  Lemma~\ref{lem:quot}(1).
\end{proof}

Let $\mathcal{T}$ be a triangulated category. For a triangulated subcategory $\mathcal{X}$, the orthogonal subcategories are defined to be
$$\mathcal{X}^\perp=\{Y\in \mathcal{T}\;|\; {\rm Hom}_\mathcal{T}(X, Y)=0  \mbox{ for all } X\in \mathcal{X}\}$$
 and
 $$^\perp\mathcal{X}=\{Y\in \mathcal{T}\;|\; {\rm Hom}_\mathcal{T}(Y, X)=0  \mbox{ for all } X\in \mathcal{X}\}.$$
 Both $\mathcal{X}^\perp$ and $^\perp\mathcal{X}$ are triangulated subcategories of $\mathcal{T}$.

Recall from \cite{Bon, BK} that a \emph{semi-orthogonal decomposition} $\mathcal{T}=\langle \mathcal{X}, \mathcal{Y}\rangle$ consists of two full triangulated subcategories $\mathcal{X}$ and $\mathcal{Y}$ subject to the following conditions:
\begin{enumerate}
\item ${\rm Hom}_\mathcal{T}(Y, X)=0$ for any $Y\in\mathcal{Y}$ and $X\in \mathcal{X}$;
\item any object $T$ in $\mathcal{T}$ fits into an exact triangle
$$Y\longrightarrow T\longrightarrow X\longrightarrow \Sigma(Y)$$
with some $Y\in \mathcal{Y}$ and $X\in \mathcal{X}$.
\end{enumerate}
The semi-orthogonal decomposition is \emph{orthogonal} if ${\rm Hom}_\mathcal{T}(X, Y)=0$ for any  $X\in\mathcal{X}$ and $Y\in \mathcal{Y}$, in which case, we write $\mathcal{T}=\mathcal{X}\times\mathcal{Y}$. We mention that semi-orthogonal decompositions are essentially equivalent to stable $t$-structures in the sense of \cite[Definition~9.14]{Miy} and Bousfield localizations of triangulated categories in \cite[Chapter~9]{Nee}.

\begin{rem}\label{rem:ortho}
Let  $\mathcal{T}=\langle \mathcal{X}, \mathcal{Y}\rangle$ be a semi-orthogonal decomposition. Then it is orthogonal if and only if each object $T$ in $\mathcal{T}$ admits a decomposition $T\simeq Y\oplus X$ with $Y\in \mathcal{Y}$ and $X\in \mathcal{X}$.
\end{rem}

The following facts are well known; see \cite[Lemma~3.1]{Bon} and \cite[Proposition~1.6]{BK}.

\begin{lem}\label{lem:semi-ortho}
Assume that $\mathcal{T}=\langle \mathcal{X}, \mathcal{Y}\rangle$ is a semi-orthogonal decomposition. Then the following statements hold.
\begin{enumerate}
\item We have $\mathcal{X}=\mathcal{Y}^\perp$ and $\mathcal{Y}={^\perp\mathcal{X}}$.
\item The canonical functors $\mathcal{X}\rightarrow \mathcal{T}/\mathcal{Y}$ and $\mathcal{Y}\rightarrow \mathcal{T}/\mathcal{X}$ are triangle equivalences.
\end{enumerate}
\end{lem}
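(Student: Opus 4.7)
The plan is to prove (1) first, then deduce (2) by combining essential surjectivity (via the decomposition triangle) with full faithfulness (via the calculus of fractions for Verdier quotients).

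For part (1), I would argue $\mathcal{X} = \mathcal{Y}^\perp$ as follows. The inclusion $\mathcal{X} \subseteq \mathcal{Y}^\perp$ is condition~(1) of the definition. For the reverse, take $T \in \mathcal{Y}^\perp$ and apply condition~(2) to obtain an exact triangle
\[
Y \longrightarrow T \longrightarrow X \longrightarrow \Sigma(Y)
\]
with $Y \in \mathcal{Y}$ and $X \in \mathcal{X}$. Since both $T$ and $X$ lie in $\mathcal{Y}^\perp$, applying $\mathrm{Hom}_\mathcal{T}(Y', -)$ with $Y' \in \mathcal{Y}$ to this triangle and the shifted triangle shows that $Y \in \mathcal{Y}^\perp$. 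Thus $Y \in \mathcal{Y} \cap \mathcal{Y}^\perp$, which forces $\mathrm{id}_Y = 0$ and hence $Y = 0$. The triangle then yields $T \simeq X \in \mathcal{X}$. The identity $\mathcal{Y} = {}^\perp\mathcal{X}$ is entirely analogous.

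For part (2), I handle the functor $F\colon \mathcal{X} \to \mathcal{T}/\mathcal{Y}$; the other case is symmetric. Essential surjectivity is immediate: for $T \in \mathcal{T}$, the decomposition triangle $Y \to T \to X \to \Sigma Y$ has $Y \in \mathcal{Y}$, which becomes zero in $\mathcal{T}/\mathcal{Y}$, so $T \simeq X = F(X)$ in $\mathcal{T}/\mathcal{Y}$. For full faithfulness, I use the standard calculus-of-fractions description: a morphism $F(X) \to F(X')$ is represented by a roof $X \xleftarrow{s} Z \xrightarrow{f} X'$ with $\mathrm{Cone}(s) \in \mathcal{Y}$. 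Completing to a triangle
\[
\Sigma^{-1}\mathrm{Cone}(s) \longrightarrow Z \xrightarrow{\ s\ } X \longrightarrow \mathrm{Cone}(s)
\]
and applying $\mathrm{Hom}_\mathcal{T}(-, X')$, both end terms vanish because $\mathrm{Cone}(s) \in \mathcal{Y}$, $X' \in \mathcal{X} = \mathcal{Y}^\perp$ by part~(1), and $\mathcal{Y}$ is stable under $\Sigma^{-1}$. Therefore $s^*\colon \mathrm{Hom}_\mathcal{T}(X,X') \to \mathrm{Hom}_\mathcal{T}(Z,X')$ is bijective, which both shows every roof collapses to a canonical one of the form $X \xleftarrow{\mathrm{id}} X \to X'$ (giving fullness) and shows that two maps $X \to X'$ equal in $\mathcal{T}/\mathcal{Y}$ are already equal in $\mathcal{T}$ (giving faithfulness). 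That $F$ is a triangle functor is automatic from its factorization through the triangle functor $\mathcal{T} \to \mathcal{T}/\mathcal{Y}$.

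The main obstacle is the fully faithful step in part~(2), but once part~(1) is available the orthogonality $\mathrm{Hom}_\mathcal{T}(\mathcal{Y}, \mathcal{X}) = 0$ makes the roof-collapsing argument essentially automatic. The only subtle point is ensuring the invariance of both endpoints of the triangle under shifting, which reduces to $\mathcal{Y}$ being triangulated and hence closed under $\Sigma^{\pm 1}$.
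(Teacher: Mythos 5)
Your argument is correct, and it is worth noting that the paper does not prove this lemma at all: it is quoted as a well-known fact with references to Bondal and Bondal--Kapranov, so your write-up supplies the standard direct proof that the paper leaves to the literature. Part (1) is fine: the long exact sequence applied to the decomposition triangle of an object $T\in\mathcal{Y}^\perp$ indeed forces the $\mathcal{Y}$-term to lie in $\mathcal{Y}\cap\mathcal{Y}^\perp$, hence to vanish, and dually for ${}^\perp\mathcal{X}$. Part (2) for $\mathcal{X}\rightarrow\mathcal{T}/\mathcal{Y}$ is also the standard calculus-of-fractions argument and is complete (for faithfulness one uses, as you implicitly do, that two maps equal in the quotient are equalized by some $s$ with cone in $\mathcal{Y}$, and then injectivity of $s^\ast$).

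One caution: the second functor $\mathcal{Y}\rightarrow\mathcal{T}/\mathcal{X}$ is not literally ``symmetric'' to the first. If you repeat your roof argument verbatim, with roofs $Y\leftarrow Z\rightarrow Y'$ whose denominator has cone $C\in\mathcal{X}$, you would need ${\rm Hom}_\mathcal{T}(C, Y')=0$, i.e.\ vanishing of ${\rm Hom}_\mathcal{T}(\mathcal{X},\mathcal{Y})$, which is exactly what a semi-orthogonal (as opposed to orthogonal) decomposition does not give. The correct treatment is the dual one: represent morphisms in $\mathcal{T}/\mathcal{X}$ by co-roofs $Y\rightarrow W\leftarrow Y'$ with the backward map having cone in $\mathcal{X}$, and apply ${\rm Hom}_\mathcal{T}(Y,-)$ to the relevant triangle, using ${\rm Hom}_\mathcal{T}(\mathcal{Y},\mathcal{X})=0$. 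So the second case is handled by dualizing the argument (swapping roofs for co-roofs and left for right orthogonality), not by repeating it; with that adjustment your proof is complete.
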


We view $\mathbf{K}^{-}(\mathfrak{a})/\mathbf{K}^b(\mathfrak{a})$ and $\mathbf{K}^{+}(\mathfrak{a})/\mathbf{K}^b(\mathfrak{a})$ as full triangulated subcategories of $\mathbf{K}(\mathfrak{a})/\mathbf{K}^b(\mathfrak{a})$.

We mention that one might deduce the following semi-orthogonal decomposition by using a general result \cite[Theorem~B]{JK}.

\begin{prop}\label{prop:semi-ortho}
Keep the notation as above. Then we have a semi-orthogonal decomposition
$$\mathbf{K}(\mathfrak{a})/\mathbf{K}^b(\mathfrak{a})=\langle \mathbf{K}^{-}(\mathfrak{a})/\mathbf{K}^b(\mathfrak{a}),  \mathbf{K}^{+}(\mathfrak{a})/\mathbf{K}^b(\mathfrak{a})\rangle.$$
Consequently, the following canonical functors
$$\mathbf{K}^{-}(\mathfrak{a})/\mathbf{K}^b(\mathfrak{a})\rightarrow \mathbf{K}(\mathfrak{a})/\mathbf{K}^{+}(\mathfrak{a}) \quad \text{and}\quad \mathbf{K}^{+}(\mathfrak{a})/\mathbf{K}^b(\mathfrak{a})\rightarrow \mathbf{K}(\mathfrak{a})/\mathbf{K}^{-}(\mathfrak{a}) $$
are both triangle equivalences.
\end{prop}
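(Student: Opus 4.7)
Set $\mathcal{T}=\mathbf{K}(\mathfrak{a})/\mathbf{K}^b(\mathfrak{a})$, $\mathcal{X}=\mathbf{K}^-(\mathfrak{a})/\mathbf{K}^b(\mathfrak{a})$ and $\mathcal{Y}=\mathbf{K}^+(\mathfrak{a})/\mathbf{K}^b(\mathfrak{a})$, viewed as full triangulated subcategories of $\mathcal{T}$ as indicated before the statement. My plan is to verify the two defining conditions of a semi-orthogonal decomposition for $\mathcal{T}=\langle \mathcal{X}, \mathcal{Y}\rangle$ directly, and then deduce the second assertion from Lemma~\ref{lem:semi-ortho}(2).

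\textbf{Step 1: Hom-vanishing.} Given $Y\in \mathcal{Y}$ (so $Y$ is bounded-below) and $X\in \mathcal{X}$ (so $X$ is bounded-above), the vanishing ${\rm Hom}_{\mathcal{T}}(Y,X)=0$ is \emph{exactly} Corollary~\ref{cor:ortho} applied with the roles of the two complexes as specified there. So this condition is free.

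\textbf{Step 2: The decomposition triangle.} Given any $T\in \mathbf{K}(\mathfrak{a})$, pick any integer $n$ and consider the canonical brutal truncation triangle~(\ref{tri:can}),
\begin{equation*}
\sigma_{\geq n}(T)\xrightarrow{{\rm inc}_n} T\xrightarrow{{\rm pr}_n} \sigma_{<n}(T)\longrightarrow \Sigma\sigma_{\geq n}(T),
\end{equation*}
in $\mathbf{K}(\mathfrak{a})$. By construction, $\sigma_{\geq n}(T)$ is bounded-below, hence lies in $\mathcal{Y}$, while $\sigma_{<n}(T)$ is bounded-above, hence lies in $\mathcal{X}$. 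Pushing the triangle to the Verdier quotient $\mathcal{T}$ produces the required exact triangle with left term in $\mathcal{Y}$ and right term in $\mathcal{X}$. Combined with Step~1, this establishes $\mathcal{T}=\langle\mathcal{X},\mathcal{Y}\rangle$.

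\textbf{Step 3: Consequence.} Apply Lemma~\ref{lem:semi-ortho}(2): the canonical functors $\mathcal{X}\to \mathcal{T}/\mathcal{Y}$ and $\mathcal{Y}\to \mathcal{T}/\mathcal{X}$ are triangle equivalences. It remains to identify $\mathcal{T}/\mathcal{Y}$ and $\mathcal{T}/\mathcal{X}$. For this, use the standard identification of iterated Verdier quotients: since $\mathbf{K}^b(\mathfrak{a})\subseteq \mathbf{K}^+(\mathfrak{a})$ are thick subcategories of $\mathbf{K}(\mathfrak{a})$, one has a natural triangle equivalence
\begin{equation*}
\bigl(\mathbf{K}(\mathfrak{a})/\mathbf{K}^b(\mathfrak{a})\bigr)\big/\bigl(\mathbf{K}^+(\mathfrak{a})/\mathbf{K}^b(\mathfrak{a})\bigr)\;\simeq\; \mathbf{K}(\mathfrak{a})/\mathbf{K}^+(\mathfrak{a}),
\end{equation*}
and similarly with $+$ replaced by $-$. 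Substituting these identifications into Lemma~\ref{lem:semi-ortho}(2) yields the two triangle equivalences claimed in the statement.

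\textbf{Main obstacle.} The only genuinely non-formal point is Step~1, which, however, has already been done in Corollary~\ref{cor:ortho} via Lemma~\ref{lem:quot}; thus after that input the entire proof is an assembly of standard facts. A subsidiary subtlety worth mentioning is the implicit full faithfulness of $\mathcal{X},\mathcal{Y}\hookrightarrow \mathcal{T}$ (used in the sentence preceding the statement), which can be verified from the colimit formulas in Lemma~\ref{lem:quot} but is not strictly part of proving the decomposition itself.
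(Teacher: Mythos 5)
Your proof is correct and follows essentially the same route as the paper: the semi-orthogonal decomposition from Corollary~\ref{cor:ortho} together with the truncation triangle (\ref{tri:can}), and the consequence via Lemma~\ref{lem:semi-ortho}(2) combined with Verdier's identification of iterated quotients (the paper cites \cite[Chapitre~2, Proposition~2.3.1.c)]{Ver} for exactly the equivalence you invoke in Step~3).
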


\begin{proof}
The semi-orthogonal decomposition follows from  Corollary~\ref{cor:ortho} and the exact triangle (\ref{tri:can}). By \cite[Chapitre~2, Proposition~2.3.1.c)]{Ver}, we have  the following well-known equivalence
$$\mathbf{K}(\mathfrak{a})/\mathbf{K}^{+}(\mathfrak{a})\stackrel{\sim}\longrightarrow {(\mathbf{K}(\mathfrak{a})/\mathbf{K}^b(\mathfrak{a}))}/{(\mathbf{K}^{+}(\mathfrak{a})/
\mathbf{K}^b(\mathfrak{a}))}.$$
Then we deduce the first equivalence in the consequence by applying Lemma~\ref{lem:semi-ortho}(2) to the semi-orthogonal decomposition above. Similarly, we have the second one.
\end{proof}

In what follows, we assume that $R$ is an arbitrary ring.  Denote by  $R\mbox{-Mod}$ the category of left $R$-modules and by $R\mbox{-mod}$ its full subcategory formed by finitely presented modules. Denote by $R\mbox{-Proj}$ the category of projective $R$-modules and by $R\mbox{-proj}$ the full subcategory formed by finitely generated ones. Denote by $\mathbf{K}^{-, b}(R\mbox{-proj})$ the full subcategory of $\mathbf{K}^{-}(R\mbox{-proj})$ consisting of bounded-above complexes with bounded cohomology. Similarly, we have the category $\mathbf{K}^{-,b}(R\mbox{-Proj})$.

The following well-known notion is due to \cite{Buc}; see also \cite{Orl}.

\begin{defn}
The \emph{singularity category} of $R$ is the following Verdier quotient category
$$\mathbf{D}_{\rm sg}(R)=\mathbf{K}^{-, b}(R\mbox{-proj})/{\mathbf{K}^{b}(R\mbox{-proj})}.$$
\end{defn}

\begin{rem}\label{rem:bigsingularitycategory}
(1) The \emph{big singularity category} of $R$ is defined to be
$$\mathbf{D}'_{\rm sg}(R)=\mathbf{K}^{-, b}(R\mbox{-Proj})/{\mathbf{K}^{b}(R\mbox{-Proj})}.$$
By \cite[Remark~3.6]{Chen11}, the canonical functor $\mathbf{D}_{\rm sg}(R)\rightarrow \mathbf{D}_{\rm sg}'(R)$ is fully faithful; compare \cite[Proposition~1.13]{Orl}.

(2) Assume that $R$ is left coherent. Then $R\mbox{-mod}$ is abelian. We might identify the bounded derived category $\mathbf{D}^b(R\mbox{-mod})$ with $\mathbf{K}^{-, b}(R\mbox{-proj})$. Therefore, the singularity category $\mathbf{D}_{\rm sg}(R)$ might be defined as $\mathbf{D}^b(R\mbox{-mod})/{\mathbf{K}^{b}(R\mbox{-proj})}$. Similarly, for an arbitrary ring $R$, we might define $\mathbf{D}'_{\rm sg}(R)$ as $\mathbf{D}^b(R\mbox{-Mod})/{\mathbf{K}^{b}(R\mbox{-Proj})}$.
\end{rem}

Let $P$ be a complex in  $\mathbf{K}^{-, b}(R\mbox{-proj})$. Then there exists a sufficiently large $n_0$ such that for any $n\geq n_0$, the cocycle $Z^{-n}(P)$ is finitely presented and $\sigma_{< -n}(P)$ is a shifted projective resolution of $Z^{-n}(P)$. For any such $n$, the projection
\begin{align}\label{iso:proj}
P\longrightarrow \sigma_{<-n}(P)
\end{align}
is an isomorphism in $\mathbf{D}_{\rm sg}(R)$. In other words, any object in the singularity category is given by a shifted projective resolution of a finitely presented module. Denote by $R\mbox{-\underline{mod}}$ the stable category of $R\mbox{-mod}$ modulo projective modules; see \cite[Chapter~IV.1]{ARS}.

\begin{lem}\label{lem:iso-in-sg}
Let $P$ and $Q$ be two complexes in  $\mathbf{K}^{-, b}(R\mbox{-{\rm proj}})$. Then $P$ and $Q$ are isomorphic in $\mathbf{D}_{\rm sg}(R)$ if and only if $Z^{-n}(P)$ and $Z^{-n}(Q)$ are isomorphic in $R\mbox{-\underline{\rm mod}}$ for any sufficiently large $n$.
\end{lem}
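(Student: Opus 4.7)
The plan is to use the isomorphism $P\cong \sigma_{<-n}(P)$ in $\mathbf{D}_{\rm sg}(R)$ from (\ref{iso:proj}), which identifies $P$ with a shifted projective resolution of $M:=Z^{-n}(P)$ for $n$ sufficiently large, and analogously for $Q$ and $N:=Z^{-n}(Q)$. Both implications then reduce to Schanuel-type arguments relating the syzygy picture to morphisms in $\mathbf{D}_{\rm sg}(R)$.

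For the $(\Leftarrow)$ direction, I would fix $n$ large enough that $\sigma_{<-n}(P)$ and $\sigma_{<-n}(Q)$ are shifted projective resolutions of $M$ and $N$, respectively, and that $M\cong N$ in $R\mbox{-\underline{\rm mod}}$. By the standard characterisation of the stable category, there exist finitely generated projective modules $A$ and $B$ with $M\oplus A\cong N\oplus B$ in $R\mbox{-mod}$. Appending the stalk complexes $A$ and $B$ (placed in degree $-n-1$) to the two truncations yields projective resolutions of isomorphic modules, which are therefore homotopy equivalent in $\mathbf{K}^{-}(R\mbox{-proj})$. Since the appended stalks lie in $\mathbf{K}^{b}(R\mbox{-proj})$ and hence vanish in $\mathbf{D}_{\rm sg}(R)$, we obtain $\sigma_{<-n}(P)\cong \sigma_{<-n}(Q)$, and thus $P\cong Q$ via (\ref{iso:proj}).

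For the $(\Rightarrow)$ direction, I would first record the auxiliary fact that the cocycle assignment $Z^{-n}$ descends to a functor $\mathbf{K}(R\mbox{-proj})\to R\mbox{-\underline{\rm mod}}$: a homotopy $h$ between chain maps $f,g\colon X\to Y$ satisfies $(f-g)|_{Z^{-n}(X)}=d_Y^{-n-1}\circ h|_{Z^{-n}(X)}$, which factors through the projective module $Y^{-n-1}$. Consequently, for any $C\in \mathbf{K}^{b}(R\mbox{-proj})$ and $n$ sufficiently large, $Z^{-n}(C)$ is projective, since comparing with a genuinely bounded representative yields $Z^{-n}(C)\cong 0$ in $R\mbox{-\underline{\rm mod}}$. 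Now represent the given isomorphism by a roof $P\xleftarrow{s} X\xrightarrow{g} Q$ with ${\rm Cone}(s),{\rm Cone}(g)\in \mathbf{K}^{b}(R\mbox{-proj})$. The canonical short exact sequence of complexes $0\to P\to {\rm Cone}(s)\to \Sigma X\to 0$, combined with $H^{-n+1}(P)=0$ for $n$ large, induces a short exact sequence
\begin{equation*}
0\longrightarrow Z^{-n}(P)\longrightarrow Z^{-n}({\rm Cone}(s))\longrightarrow Z^{-n+1}(X)\longrightarrow 0,
\end{equation*}
whose middle term is projective. Coupled with the tautological sequence $0\to Z^{-n}(X)\to X^{-n}\to Z^{-n+1}(X)\to 0$, in which $X^{-n}$ is projective and surjectivity uses $H^{-n+1}(X)=0$, Schanuel's lemma produces $Z^{-n}(P)\cong Z^{-n}(X)$ in $R\mbox{-\underline{\rm mod}}$. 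The symmetric argument applied to $g$ delivers $Z^{-n}(Q)\cong Z^{-n}(X)$, whence $Z^{-n}(P)\cong Z^{-n}(Q)$ for all sufficiently large $n$.

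The main obstacle I foresee lies in this last step: the mapping cone ${\rm Cone}(s)$ is a priori only isomorphic in $\mathbf{K}$ to a genuinely bounded complex, so its terms in very negative degrees need not vanish. The remedy is exactly the homotopy-invariance of $Z^{-n}$ after passing to $R\mbox{-\underline{\rm mod}}$, which ensures $Z^{-n}({\rm Cone}(s))$ is projective for large $n$ even though it need not be zero on the nose, so that Schanuel's lemma still produces the desired stable isomorphism.
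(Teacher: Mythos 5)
Your proof is correct and follows essentially the same route as the paper: the ``if'' part via the truncation isomorphism (\ref{iso:proj}), and the ``only if'' part by reducing to maps whose cone is homotopy equivalent to a bounded complex. You merely supply the details the paper leaves implicit (the stable functoriality of $Z^{-n}$ on $\mathbf{K}(R\mbox{-proj})$ and the Schanuel argument for each leg of the roof), which is a legitimate filling-in rather than a different method.
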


\begin{proof}
The following observation implies the  ``only if" part: if there is a cochain map $f\colon P\rightarrow P'$ whose mapping cone is homotopical to a bounded complex, then $Z^{-n}(P)$ and $Z^{-n}(P')$ are isomorphic in $R\mbox{-\underline{\rm mod}}$ for any sufficiently large $n$. The ``if" part follows from the isomorphism (\ref{iso:proj}) applied to $P$ and $Q$, respectively.
\end{proof}

For a complex $P$  in $\mathbf{K}(R\mbox{-proj})$, we consider the \emph{dual complex} $P^*={\rm Hom}_R(P, R)$, which is a complex of finitely generated projective right $R$-modules. We view $P^*$ as an object in $\mathbf{K}(R^{\rm op}\mbox{-proj})$. Here, $R^{\rm op}$ denotes the opposite ring of $R$, and we identify right $R$-modules with left $R^{\rm op}$-modules.

The following treatment might be compared with \cite[Subsection~6.1]{JK}. We consider the following full triangulated subcategory of $\mathbf{K}(R\mbox{-proj})$:
$$\mathbf{K}^B(R\mbox{-proj})=\{P\in \mathbf{K}(R\mbox{-proj})\; |\; H^{-n}(P)=0=H^{-n}(P^*) \mbox{ for } n \gg 0\}.$$
We have $\mathbf{K}^b(R\mbox{-proj})\subseteq \mathbf{K}^B(R\mbox{-proj})$, and consider the Verdier quotient category $\mathbf{K}^B(R\mbox{-proj})/{\mathbf{K}^b(R\mbox{-proj})}$.

We will  adopt semi-orthogonal decompositions in a slightly more general setting. Let $\mathcal{T}=\langle \mathcal{X}, \mathcal{Y}\rangle$ be a semi-orthogonal decomposition. If $\mathcal{X}'$ (\emph{resp}. $\mathcal{Y}'$) is a triangulated category which is triangle equivalent to $\mathcal{X}$ (\emph{resp}. $\mathcal{Y}$), we still write $\mathcal{T}=\langle \mathcal{X}', \mathcal{Y}'\rangle$. For a triangulated category $\mathcal{X}$, we denote by $\mathcal{X}^{\rm op}$ its opposite category, which is also naturally triangulated.

\begin{prop}\label{prop:semi-ortho2}
We have a semi-orthogonal decomposition  $$\mathbf{K}^B(R\mbox{-{\rm proj}})/{\mathbf{K}^b(R\mbox{-{\rm proj}})}=\langle \mathbf{D}_{\rm sg}(R), \mathbf{D}_{\rm sg}(R^{\rm op})^{\rm op}\rangle.$$
\end{prop}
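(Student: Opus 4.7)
The plan is to apply Proposition~\ref{prop:semi-ortho} with $\mathfrak{a} = R\mbox{-proj}$ and show that the resulting semi-orthogonal decomposition restricts to the full triangulated subcategory $\mathbf{K}^B(R\mbox{-proj})/\mathbf{K}^b(R\mbox{-proj})$. The key tool is the duality $(-)^* = {\rm Hom}_R(-, R)$, which is a triangle anti-equivalence $\mathbf{K}(R\mbox{-proj}) \simeq \mathbf{K}(R^{\rm op}\mbox{-proj})^{\rm op}$ (with inverse given by the same formula, as finitely generated projectives are reflexive), exchanging bounded-above and bounded-below complexes, preserving $\mathbf{K}^b$, and preserving $\mathbf{K}^B$ by the manifest symmetry of its defining condition.

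First I would identify both pieces inside $\mathbf{K}^B(R\mbox{-proj})/\mathbf{K}^b(R\mbox{-proj})$. For bounded-above $P$, the dual $P^*$ is bounded-below, so $H^{-n}(P^*) = 0$ for $n \gg 0$ is automatic; hence $\mathbf{K}^B(R\mbox{-proj}) \cap \mathbf{K}^-(R\mbox{-proj}) = \mathbf{K}^{-,b}(R\mbox{-proj})$, and the corresponding quotient modulo $\mathbf{K}^b(R\mbox{-proj})$ is $\mathbf{D}_{\rm sg}(R)$. Dually, $(-)^*$ sends $\mathbf{K}^B(R\mbox{-proj}) \cap \mathbf{K}^+(R\mbox{-proj})$ onto $\mathbf{K}^{-,b}(R^{\rm op}\mbox{-proj})^{\rm op}$ (a bounded-below $P$ with $H^{-n}(P^*) = 0$ for $n \gg 0$ dualizes to a bounded-above complex with bounded cohomology), giving $\mathbf{D}_{\rm sg}(R^{\rm op})^{\rm op}$ after quotienting.

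Now I would verify the two axioms of a semi-orthogonal decomposition. The Hom-vanishing is immediate from Corollary~\ref{cor:ortho}, together with the fact that the natural functor $\mathbf{K}^B(R\mbox{-proj})/\mathbf{K}^b(R\mbox{-proj}) \to \mathbf{K}(R\mbox{-proj})/\mathbf{K}^b(R\mbox{-proj})$ is fully faithful. For the decomposition triangles, given $P \in \mathbf{K}^B(R\mbox{-proj})$ choose $N$ so that $H^{-n}(P) = H^{-n}(P^*) = 0$ for all $n \geq N$, and apply the canonical triangle (\ref{tri:can}) at $n = -N$:
$$\sigma_{\geq -N}(P) \longrightarrow P \longrightarrow \sigma_{<-N}(P) \longrightarrow \Sigma\sigma_{\geq -N}(P).$$
A routine truncation computation shows $H^k(\sigma_{<-N}(P)) = H^k(P) = 0$ for $k \leq -N-2$, so $\sigma_{<-N}(P) \in \mathbf{K}^{-,b}(R\mbox{-proj})$. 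Since $\sigma_{\geq -N}(P)^* = \sigma_{<N+1}(P^*)$, whose cohomology agrees with that of $P^*$ in degrees $\leq N-1$ and hence vanishes in all degrees $\leq -N$, we also have $\sigma_{\geq -N}(P) \in \mathbf{K}^B(R\mbox{-proj}) \cap \mathbf{K}^+(R\mbox{-proj})$, corresponding to an object of $\mathbf{D}_{\rm sg}(R^{\rm op})^{\rm op}$. The main obstacle is the bookkeeping in this duality step: one must carefully track how $(-)^*$ swaps the bounded-above and bounded-below conditions and interacts with the brutal truncations, so that the defining condition of $\mathbf{K}^B$ passes to both outer terms of the canonical triangle. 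Everything else is a direct transcription of Proposition~\ref{prop:semi-ortho}.
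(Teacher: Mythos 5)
Your proposal is correct and follows essentially the same route as the paper: restrict the semi-orthogonal decomposition of Proposition~\ref{prop:semi-ortho} (for $\mathfrak{a}=R\mbox{-proj}$) to $\mathbf{K}^B(R\mbox{-proj})/\mathbf{K}^b(R\mbox{-proj})$, identify the bounded-above factor with $\mathbf{D}_{\rm sg}(R)$ via $\mathbf{K}^B\cap\mathbf{K}^-=\mathbf{K}^{-,b}$, and identify the bounded-below factor with $\mathbf{D}_{\rm sg}(R^{\rm op})^{\rm op}$ via the duality $\mathrm{Hom}_R(-,R)$. The only difference is that you spell out the truncation bookkeeping behind the word ``restricts'' (that both outer terms of the canonical triangle stay in $\mathbf{K}^B$), which the paper leaves implicit; this is a correct and welcome elaboration, not a different argument.
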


\begin{proof}
The semi-orthogonal decomposition in Proposition~\ref{prop:semi-ortho} restricts to the following one
$$\frac{\mathbf{K}^B(R\mbox{-proj})}{{\mathbf{K}^b(R\mbox
{-proj})}}=\langle \frac{\mathbf{K}^B(R\mbox{-proj})\cap \mathbf{K}^-(R\mbox{-proj})}{{\mathbf{K}^b(R\mbox{-proj})}}, \frac{\mathbf{K}^B(R\mbox{-proj})\cap \mathbf{K}^+(R\mbox{-proj})}{{\mathbf{K}^b(R\mbox{-proj})}}  \rangle.$$
For the first factor in the decomposition, we just observe that
$$\mathbf{K}^B(R\mbox{-proj})\cap \mathbf{K}^-(R\mbox{-proj})=\mathbf{K}^{-, b}(R\mbox{-proj}).$$
Therefore, we identify the first factor with $\mathbf{D}_{\rm sg}(R)$. For the second factor, we use the following duality of categories
$${\rm Hom}_{R^{\rm op}}(-, R)\colon \mathbf{K}^{-, b}(R^{\rm op}\mbox{-proj})\longrightarrow \mathbf{K}^B(R\mbox{-proj})\cap \mathbf{K}^+(R\mbox{-proj}),$$
which sends $\mathbf{K}^{b}(R^{\rm op}\mbox{-proj})$ to $\mathbf{K}^{b}(R\mbox{-proj})$. The duality allows us to identify the second factor with $\mathbf{D}_{\rm sg}(R^{\rm op})^{\rm op}$. Then we are done.
\end{proof}

The  following example shows that the semi-orthogonal decomposition in Proposition~\ref{prop:semi-ortho2} is not orthogonal in general. It follows that the same holds for the one in Proposition~\ref{prop:semi-ortho}.

\begin{exm}
{\rm Let $R$ be a quasi-Frobenius ring and $M$ be a finitely generated non-projective $R$-module. Take a \emph{complete resolution} $P$ of $M$, that is $P=(P^n, d_P^n)_{n\in \mathbb{Z}}$ is an acyclic complex of finitely generated projective $R$-modules with its $(-1)$-th cocycle $Z^{-1}(P)\simeq M$. The differential $d_P^{0}\colon P^0\rightarrow P^1$ induces a cochain map $\phi\colon \sigma_{\leq 0}(P)\rightarrow \Sigma\sigma_{\geq 1}(P)$.

We claim that $\phi$ yields a nonzero morphism in $\mathbf{K}(R\mbox{-proj})/\mathbf{K}^b(R\mbox{-proj})$. This implies that the semi-orthogonal decomposition in Proposition~\ref{prop:semi-ortho2} is not orthogonal.

For the claim,  we assume the converse. Then, when viewed as a morphism in $\mathbf{K}(R\mbox{-proj})$, $\phi$ factors through an object $Q\in \mathbf{K}^b(R\mbox{-proj})$.  We observe that $\phi$ is a quasi-isomorphism and becomes an isomorphism in the derived category $\mathbf{D}(R\mbox{-Mod})$. Indeed,  both $\sigma_{\leq 0}(P)$ and $\Sigma\sigma_{\geq 1}(P)$ are quasi-isomorphic to $M$. It follows that the identity map on $M$ factors through $Q$ in $\mathbf{D}(R\mbox{-Mod})$. This implies that in $\mathbf{D}(R\mbox{-Mod})$, $M$ is isomorphic to a bounded complex of finitely generated projective modules. This is impossible, as the projective dimension of $M$ is infinite.
}\end{exm}

\section{The Vogel dg category}

For an additive category, we will introduce its Vogel dg category; see Definition~\ref{defn:V}. We obtain an orthogonal decomposition of the homotopy category of the Vogel dg category; see Theorem~\ref{thm:Vogel}.

\subsection{Preliminaries on dg categories}

In this subsection, we recall from \cite{Kel94, Dri} basic facts on dg categories.

Let $\mathcal{C}$ be a dg category. For two objects $X$ and $Y$, the morphism complex is usually denoted by $\mathcal{C}(X, Y)$, whose differential is denoted by $d_\mathcal{C}$ or simply by $d$. By default, we only consider homogeneous morphisms in any dg category. For any $f\in \mathcal{C}(X, Y)^n$, we write $|f|=n$. A morphism $f\colon X\rightarrow Y$ is called \emph{closed}, if $d_\mathcal{C}(f)=0$. A closed isomorphism of degree zero is called a \emph{dg-isomorphism}.

The \emph{ordinary category} $Z^0(\mathcal{C})$ of $\mathcal{C}$ is a pre-additive category, which has the same objects as $\mathcal{C}$ has,  and whose morphisms are precisely closed morphisms of degree zero in $\mathcal{C}$. The \emph{homotopy category} $H^0(\mathcal{C})$ is a factor category of $Z^0(\mathcal{C})$, whose morphisms are given by the zeroth cohomology groups of the morphism complexes in $\mathcal{C}$. In other words, we have
$${\rm Hom}_{Z^0(\mathcal{C})}(X, Y)=Z^0(\mathcal{C}(X, Y)) \quad \text{and} \quad {\rm Hom}_{H^0(\mathcal{C})}(X, Y)=H^0(\mathcal{C}(X, Y)).$$
A closed morphism of degree zero is called \emph{homotopy equivalence} if it becomes an isomorphism in $H^0(\mathcal{C})$. Clearly, a dg-isomorphism is a homotopy equivalence.

An object $X$ in $\mathcal{C}$ is called \emph{contractible} if there exists an endomorphism $\epsilon$ of $X$ of degree $-1$ with $d_\mathcal{C}(\epsilon)={\rm Id}_X$. This is equivalent to the condition that $X$ is the zero object in $H^0(\mathcal{C})$.

The prototypical example of dg categories is as follows.

\begin{exm}\label{exm:Hom-complex}
 Let $\mathfrak{a}$ be an additive category.  For two complexes $X$ and $Y$, the Hom-complex ${\rm Hom}_\mathfrak{a}(X, Y)$ is a complex of abelian groups whose $n$-th component is given by the following infinite product
$${\rm Hom}_\mathfrak{a}(X, Y)^n=\prod_{p\in \mathbb{Z}} {\rm Hom}_\mathfrak{a}(X^p, Y^{n+p}).$$
An element of ${\rm Hom}_\mathfrak{a}(X, Y)^n$ is denoted by $f=(f^p)_{p\in \mathbb{Z}}$, which might be viewed as a graded morphism of degree $n$ from $X$ to $Y$. The differential
$$d^n\colon {\rm Hom}_\mathfrak{a}(X, Y)^n\longrightarrow {\rm Hom}_\mathfrak{a}(X, Y)^{n+1}$$
sends $f$ to $d^nf$ such that $(d^nf)^p=d_Y^{p+n}\circ f^p-(-1)^n f^{p+1}\circ d_X^{p}$ for each $p\in \mathbb{Z}$. The importance of the Hom-complex lies in the following standard identities
\begin{align}\label{equ:Hom-complex}
Z^0{\rm Hom}_\mathfrak{a}(X, Y)={\rm Hom}_{C(\mathfrak{a})}(X, Y), \; \mbox{and } H^0{\rm Hom}_\mathfrak{a}(X, Y)={\rm Hom}_{\mathbf{K}(\mathfrak{a})}(X, Y).
\end{align}

We denote by $C_{\rm dg}(\mathfrak{a})$ the dg category of complexes, whose objects are just complexes in $\mathfrak{a}$ and whose morphism  complexes between $X$ and $Y$ are given by ${\rm Hom}_\mathfrak{a}(X, Y)$. The composition of morphisms in  $C_{\rm dg}(\mathfrak{a})$  is given by the composition of graded morphisms. In view of (\ref{equ:Hom-complex}), we infer that
$$Z^0(C_{\rm dg}(\mathfrak{a}))=C(\mathfrak{a}) \mbox{ and } H^0(C_{\rm dg}(\mathfrak{a}))=\mathbf{K}(\mathfrak{a}).$$ Therefore, the categories $C(\mathfrak{a})$ and $\mathbf{K}(\mathfrak{a})$ are shadows of the dg category $C_{\rm dg}(\mathfrak{a})$.

We denote by $C^{-}_{\rm dg}(\mathfrak{a})$, $C^{+}_{\rm dg}(\mathfrak{a})$ and $C^b_{\rm dg}(\mathfrak{a})$ the full dg subcategories of $C_{\rm dg}(\mathfrak{a})$ formed by bounded-above complexes, bounded-below complexes and bounded complexes, respectively.
\end{exm}

A dg functor $F\colon \mathcal{C}\rightarrow \mathcal{D}$ is said to be quasi-fully faithful if for any objects $X$ and $Y$ in $\mathcal{C}$, the cochain map
$$\mathcal{C}(X, Y)\longrightarrow \mathcal{D}(FX, FY), \; f\mapsto F(f),$$
is a quasi-isomorphism. In this case, the induced functor $H^0(F)\colon H^0(\mathcal{C})\rightarrow H^0(\mathcal{D})$ is fully faithful. If in addition $H^0(F)$ is dense, we call $F$ a \emph{quasi-equivalence}.

Following \cite[p.105, Remark]{BK2} and \cite[p.650]{Dri}, a dg category $\mathcal{C}$ is called \emph{strongly pretriangulated}, if it has internal shifts of objects and internal cones of closed morphisms of degree zero.

Let us explain the internal cones. For a closed morphism $f\colon X\rightarrow Y$ of degree zero, its \emph{internal cone} means an object $C$ which fits into a diagram in $\mathcal{C}$
    \[
\xymatrix{Y \ar[rr]^-{j} && C \ar@/^1pc/@{.>}[ll]^-{t} \ar[rr]^-{p} &&  X \ar@/^1pc/@{.>}[ll]^-{s} }\]
with $|j|=0=|t|$, $|p|=1$ and $|s|=-1$ subject to the following identities:
$$p\circ j=0=t\circ s,\ \ {\rm Id}_C=s\circ p+j\circ t,\ \  {\rm Id}_{Y}=t\circ j, \ \ {\rm Id}_{X}=p\circ s$$
and
$$ d_\mathcal{C}(j)=0=d_{\mathcal{C}}(p),\ \ d_\mathcal{C}(s)=j\circ f.$$
Such an internal cone is unique up to dg-isomorphism, if it exists. We mention that assuming the remaining assumptions, the condition $d_\mathcal{C}(s)=j\circ f$ is equivalent to  $d_\mathcal{C}(t)=-f\circ p$.

We mention that strongly pretriangulated dg categories are called exact in \cite{Kel99}.  In this case, the category $Z^0(\mathcal{C})$ has a canonical Frobenius exact structure whose stable category coincides with $H^0(\mathcal{C})$. Consequently, $H^0(\mathcal{C})$ has a canonical triangulated structure; see \cite[Subsection~2.1]{Kel99}.

It is well known that $C_{\rm dg}(\mathfrak{a})$ is strongly pretriangulated. The internal shifts of complexes are just usual shifts of complexes.  Since a closed morphism of degree zero in $C_{\rm dg}(\mathfrak{a})$  is precisely a cochain map, its internal
cone is given by the mapping cone. For the same reason,  $C^{-}_{\rm dg}(\mathfrak{a})$, $C^{+}_{\rm dg}(\mathfrak{a})$ and $C^b_{\rm dg}(\mathfrak{a})$ are all strongly pretriangulated.

One of the disadvantages of strongly pretriangulated categories is that they are not invariant under quasi-equivalences. A dg category $\mathcal{C}$ is \emph{pretriangulated} if it has shifts and cones up to homotopy; see \cite[p.650]{Dri}. In this case, the homotopy category $H^0(\mathcal{C})$ still has a canonical triangulated structure. If there is a quasi-equivalence $F\colon \mathcal{C}\rightarrow \mathcal{D}$, then $\mathcal{C}$ is pretriangulated if and only if so is $\mathcal{D}$, in which case $H^0(F)\colon H^0(\mathcal{C})\rightarrow H^0(\mathcal{D})$ is a triangle equivalence.

Let us explain   cones up to homotopy, which are somewhat subtle. For a closed morphism $f\colon X\rightarrow Y$ of degree zero, its \emph{cone up to homotopy} means an object $C'$ which fits into a diagram in $\mathcal{C}$
    \[
\xymatrix{Y \ar[rr]^-{j} && C' \ar@/^1pc/@{.>}[ll]^-{t} \ar[rr]^-{p} &&  X \ar@/^1pc/@{.>}[ll]^-{s} }\]
with $|j|=0=|t|$, $|p|=1$ and $|s|=-1$ subject to the following conditions:
\begin{enumerate}
\item $d_\mathcal{C}(j)=0=d_{\mathcal{C}}(p),\ \ d_\mathcal{C}(s)=j\circ f,\ \ d_\mathcal{C}(t)=-f\circ p$;
\item there exists $\varepsilon'\in \mathcal{C}(C', C')^{-1}$ such that ${\rm Id}_{C'}-d_\mathcal{C}(\varepsilon')=s\circ p+j\circ t$;
\item there exist morphisms $h\in \mathcal{C}(Y, X)^{0}$, $\varepsilon_X\in \mathcal{C}(X, X)^{-1}$, $\varepsilon_Y\in \mathcal{C}(Y, Y)^{-1}$,  and $r\in \mathcal{C}(X, Y)^{-2}$ satisfying the following identities:
\begin{align*}
    &p\circ j=d_\mathcal{C}(h), \quad    t\circ s=d_\mathcal{C}(r)+f\circ \varepsilon_X-\varepsilon_Y\circ f,\\
  & {\rm Id}_Y-t\circ j=d_\mathcal{C}(\varepsilon_Y)+f\circ h, \mbox{ and } {\rm Id}_X-p\circ s=d_\mathcal{C}(\varepsilon_X)+h\circ f.
\end{align*}
\end{enumerate}
Such a cone is unique up to homotopy equivalence, if it exists.

\begin{rem}
    Denote by ${\rm DGMod}\mbox{-}\mathcal{C}$ the dg category formed by right dg $\mathcal{C}$-modules. Consider the Yoneda dg functor
    $$\mathbf{Y}\colon \mathcal{C}\longrightarrow {\rm DGMod}\mbox{-}\mathcal{C},\quad  X\mapsto \mathcal{C}(-, X).$$
Assume that $f\colon X\rightarrow Y $ is a closed morphism  of degree zero in $\mathcal{C}$. Then an object $E$ in $\mathcal{C}$ is dg-isomorphic to the  internal cone of $f$ if and only if $\mathbf{Y}(E)$ is dg-isomorphic to the mapping cone of $\mathbf{Y}(f)$ in ${\rm DGMod}\mbox{-}\mathcal{C}$. Similarly, an object $E'$ in $\mathcal{C}$ is homotopy equivalent to the cone up to homotopy  of $f$ if and only if $\mathbf{Y}(E')$ is homotopy equivalent to the mapping cone of $\mathbf{Y}(f)$ in ${\rm DGMod}\mbox{-}\mathcal{C}$.
\end{rem}

The ``only if" part of the following well-known result is trivial, and the ``if" part is useful; see \cite[Lemma~3.1]{CC}.

\begin{lem}\label{lem:CC}
Let $F\colon \mathcal{C}\rightarrow \mathcal{D}$ be a dg functor between two pretriangulated dg categories. Then $F$ is a quasi-equivalence if and only if $H^0(F)\colon H^0(\mathcal{C})\rightarrow H^0(\mathcal{D})$ is a triangle equivalence.
\end{lem}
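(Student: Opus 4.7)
The plan is to handle the two implications separately, with the forward direction being essentially a definition chase and the reverse direction requiring a cohomology-shift argument.

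For the ``only if'' part, I would simply unpack definitions. A quasi-equivalence $F$ induces a fully faithful functor $H^0(F)$ on zeroth cohomology; density is built into the definition, so $H^0(F)$ is an equivalence. Since $F$ is a dg functor and both $\mathcal{C}$ and $\mathcal{D}$ are pretriangulated, $F$ preserves shifts and cones up to homotopy (these are characterized by the universal property stated in the Remark above via the Yoneda embedding). Hence $H^0(F)$ carries distinguished triangles to distinguished triangles, making it a triangle equivalence.

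For the ``if'' part, assume $H^0(F)$ is a triangle equivalence. Density of $H^0(F)$ tells us at once that $H^0(F)$ is essentially surjective, which gives the density condition required for $F$ to be a quasi-equivalence. What remains is to upgrade the fully faithfulness of $H^0(F)$ to quasi-fully faithfulness of $F$, that is, to show that the cochain map
\[
F_{X,Y}\colon \mathcal{C}(X,Y)\longrightarrow \mathcal{D}(FX,FY)
\]
induces an isomorphism on $H^n$ for every integer $n$, not merely for $n=0$. The key reduction is to use the shift in the pretriangulated structure of $\mathcal{C}$: by definition of a pretriangulated dg category, for each $Y$ and each $n\in\mathbb{Z}$ there exists an object $\Sigma^n Y\in\mathcal{C}$ whose representable module is homotopy equivalent to $\mathcal{C}(-,Y)[n]$. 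This identifies
\[
H^n(\mathcal{C}(X,Y))\simeq H^0(\mathcal{C}(X,\Sigma^n Y))={\rm Hom}_{H^0(\mathcal{C})}(X,\Sigma^n Y),
\]
and analogously for $\mathcal{D}$. Since $H^0(F)$ is a triangle equivalence, it commutes with shifts, so $F(\Sigma^n Y)$ and $\Sigma^n(FY)$ are homotopy equivalent in $\mathcal{D}$. Combining these identifications, $F_{X,Y}$ on $H^n$ is identified with the map
\[
{\rm Hom}_{H^0(\mathcal{C})}(X,\Sigma^n Y)\longrightarrow {\rm Hom}_{H^0(\mathcal{D})}(FX,\Sigma^n(FY))
\]
induced by $H^0(F)$, which is an isomorphism by fully faithfulness. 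Hence $F_{X,Y}$ is a quasi-isomorphism and $F$ is a quasi-equivalence.

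The main obstacle I anticipate is pedantic rather than conceptual: the compatibility isomorphisms between $F$ and the shift functors exist only up to homotopy, and similarly the representability of $\mathcal{C}(-,Y)[n]$ by an actual object $\Sigma^n Y$ holds only up to homotopy equivalence of dg modules. One must therefore check carefully that the naturality square identifying $H^n(F_{X,Y})$ with $H^0(F)$ on the shifted Hom-sets actually commutes, which is most cleanly done by passing through the Yoneda embedding into ${\rm DGMod}\text{-}\mathcal{C}$ and ${\rm DGMod}\text{-}\mathcal{D}$ (cf.\ the Remark above), where shifts and cones are strict.
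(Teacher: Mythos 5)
Your argument is correct, but there is nothing in the paper to compare it with: the lemma is quoted as a known result, the ``only if'' part being declared trivial and the ``if'' part being cited to \cite[Lemma~3.1]{CC}, and your proof is essentially the standard argument from that reference. One remark that disposes of the ``pedantic obstacle'' you anticipate: you can avoid both the comparison $F(\Sigma^n Y)\simeq \Sigma^n(FY)$ and any appeal to $H^0(F)$ commuting with shifts. Since $\mathcal{C}$ is pretriangulated, the shift $\Sigma^n Y$ is certified inside $\mathcal{C}$ by explicit data: closed morphisms $v\colon Y\rightarrow \Sigma^n Y$ and $w\colon \Sigma^n Y\rightarrow Y$ of the appropriate (opposite) degrees together with homotopies $h,h'$ with $\mathrm{Id}_Y-w\circ v=d(h)$ and $\mathrm{Id}_{\Sigma^n Y}-v\circ w=d(h')$; this is what the homotopy equivalence $\mathbf{Y}(\Sigma^n Y)\simeq \mathbf{Y}(Y)[n]$ amounts to via the dg Yoneda lemma. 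Composition with $v$ is then, for every $X$, a homotopy equivalence $\mathcal{C}(X,Y)\rightarrow \mathcal{C}(X,\Sigma^n Y)$ shifting degrees by $n$, and since $F$ is a dg functor it preserves all of this data on the nose, so $F(v),F(w),F(h),F(h')$ certify the same for $\mathcal{D}(FX,FY)\rightarrow \mathcal{D}(FX,F\Sigma^n Y)$; moreover the square comparing $F_{X,Y}$ with $F_{X,\Sigma^n Y}$ via composition with $v$ and $F(v)$ commutes strictly because $F$ preserves composition. Hence $H^n(F_{X,Y})$ is identified with $H^0(F_{X,\Sigma^n Y})\colon {\rm Hom}_{H^0(\mathcal{C})}(X,\Sigma^n Y)\rightarrow {\rm Hom}_{H^0(\mathcal{D})}(FX,F(\Sigma^n Y))$, which is bijective by full faithfulness of $H^0(F)$, and density is immediate as you say; in particular only full faithfulness and density of $H^0(F)$ and the existence of shifts up to homotopy in $\mathcal{C}$ are used in this direction, so no naturality check through the Yoneda embedding of $\mathcal{D}$ is needed.
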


 Let $\mathcal{C}$ be a dg category. A \emph{dg ideal} $\mathcal{I}=\{\mathcal{I}(X, Y)\}_{X, Y\in {\rm Obj}(\mathcal{C})}$ consists of sub-complexes  $\mathcal{I}(X, Y)$ of $\mathcal{C}(X, Y)$ which satisfy the following condition: for any $f\in \mathcal{I}(X, Y)$, $a\in \mathcal{C}(X', X)$ and $b\in \mathcal{C}(Y, Y')$, the composition $b\circ f\circ a$ lies in $\mathcal{I}(X', Y')$. We have the factor dg category $\mathcal{C}/\mathcal{I}$, which has the same objects with $\mathcal{C}$ and whose morphism complexes are given by the quotient complex $\mathcal{C}(X, Y)/\mathcal{I}(X, Y)$. For a morphism $f$ in $\mathcal{C}$, the corresponding morphism in $\mathcal{C}/\mathcal{I}$ is denoted by $\underline{f}$.

 \begin{lem}\label{lem:factor}
 Let $\mathcal{C}$ be a strongly pretriangulated dg category and $\mathcal{I}$ a dg ideal of $\mathcal{C}$. Assume that for any morphism $f\colon X\rightarrow Y$ of degree zero satisfying $d_\mathcal{C}(f)\in \mathcal{I}(X, Y)$, there exist two morphisms $\iota\colon X'\rightarrow X$ and $\kappa \colon Y\rightarrow Y'$ of degree zero such that $d_\mathcal{C}(\kappa\circ f\circ \iota)=0$ and that both $\iota$ and $\kappa$ represent  dg-isomorphisms in $\mathcal{C}/\mathcal{I}$. Then $\mathcal{C}/\mathcal{I}$ is also strongly pretriangulated.
 \end{lem}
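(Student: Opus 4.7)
The plan is to establish the two defining features of a strongly pretriangulated dg category for the quotient $\mathcal{C}/\mathcal{I}$: internal shifts of objects and internal cones of closed morphisms of degree zero. Since the canonical dg functor $\pi\colon \mathcal{C}\rightarrow \mathcal{C}/\mathcal{I}$ is the identity on objects and sends closed morphisms to closed morphisms, an internal shift $\Sigma X$ of an object $X$ in $\mathcal{C}$, together with the structure morphisms witnessing the shift, descends under $\pi$ to an internal shift of $X$ in $\mathcal{C}/\mathcal{I}$. Hence the shift part is immediate.

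The main content is to construct, for every closed morphism $\underline{f}\colon X\rightarrow Y$ of degree zero in $\mathcal{C}/\mathcal{I}$, an internal cone. First I would pick any lift $f\in \mathcal{C}(X,Y)^0$ of $\underline{f}$. By definition of the differential on $\mathcal{C}/\mathcal{I}$, the equality $d(\underline{f})=0$ means $d_\mathcal{C}(f)\in \mathcal{I}(X,Y)$, so the hypothesis of the lemma supplies morphisms $\iota\colon X'\rightarrow X$ and $\kappa\colon Y\rightarrow Y'$ of degree zero such that $g:=\kappa\circ f\circ \iota$ is closed in $\mathcal{C}$ and $\underline{\iota},\underline{\kappa}$ are dg-isomorphisms in $\mathcal{C}/\mathcal{I}$. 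Since $\mathcal{C}$ is strongly pretriangulated, $g$ admits an internal cone $C_g$ in $\mathcal{C}$, equipped with structure morphisms $j\colon Y'\rightarrow C_g$, $t\colon C_g\rightarrow Y'$, $p\colon C_g\rightarrow X'$, $s\colon X'\rightarrow C_g$ satisfying the axioms recalled before the lemma.

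Next I would promote $C_g$ to an internal cone of $\underline{f}$ in $\mathcal{C}/\mathcal{I}$ by conjugating the structure morphisms with $\underline{\iota}$ and $\underline{\kappa}$. Concretely, put
\[
j'=\underline{j}\circ\underline{\kappa},\quad t'=\underline{\kappa}^{-1}\circ\underline{t},\quad p'=\underline{\iota}\circ\underline{p},\quad s'=\underline{s}\circ\underline{\iota}^{-1}
\]
in $\mathcal{C}/\mathcal{I}$. The identities $p'\circ j'=0=t'\circ s'$, ${\rm Id}_{C_g}=s'\circ p'+j'\circ t'$, ${\rm Id}_Y=t'\circ j'$, and ${\rm Id}_X=p'\circ s'$ then follow from the analogous identities for $(j,t,p,s)$ combined with $\underline{\kappa}^{-1}\circ\underline{\kappa}={\rm Id}_Y$ and $\underline{\iota}\circ\underline{\iota}^{-1}={\rm Id}_X$. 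Closedness of $j',t',p'$ is automatic because $\underline{\iota},\underline{\kappa},\underline{\iota}^{-1},\underline{\kappa}^{-1}$ are closed of degree zero in $\mathcal{C}/\mathcal{I}$; the only nontrivial relation is $d(s')=j'\circ \underline{f}$, which follows from $d(\underline{s})=\underline{j}\circ\underline{g}=\underline{j}\circ\underline{\kappa}\circ\underline{f}\circ\underline{\iota}$ by post-composing with $\underline{\iota}^{-1}$ and using the graded Leibniz rule together with $d(\underline{\iota}^{-1})=0$.

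The only point requiring genuine care is the existence of the rectification $(\iota,\kappa)$, which is exactly the content of the lemma's hypothesis; everything else is formal bookkeeping. In particular, the dg-invertibility of $\underline{\iota}$ and $\underline{\kappa}$ in $\mathcal{C}/\mathcal{I}$ is essential, as it is what allows $\underline{f}$ and $\underline{g}$ to share a common internal cone. I expect no further obstacle beyond tracking degrees and signs in the Leibniz computation above.
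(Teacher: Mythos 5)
Your proposal is correct and takes essentially the same route as the paper: lift $\underline{f}$, rectify it via the hypothesis to the closed morphism $\kappa\circ f\circ\iota$, form its internal cone in $\mathcal{C}$, and transport that cone back along the dg-isomorphisms $\underline{\iota}$ and $\underline{\kappa}$. Your explicit conjugation of the structure morphisms $(j,t,p,s)$ simply spells out the paper's appeal to the invariance of internal cones under dg-isomorphisms, so there is nothing to add.
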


 \begin{proof}
Recall that a dg category $\mathcal{C}$ has internal shifts if and only if for each object $X$, there are a closed isomorphism $X\rightarrow X_1$ of degree $1$ and a closed isomorphism $X\rightarrow X_2$ of degree $-1$. Since the given dg category $\mathcal{C}$  has internal shifts, so does  the factor dg category $\mathcal{C}/\mathcal{I}$.

For the existence of internal cones, we observe that any closed morphism of degree zero in $\mathcal{C}/\mathcal{I}$ is of the form $\underline{f}$ for some morphism $f\colon X\rightarrow Y$ of degree zero in $\mathcal{C}$ satisfying $d_\mathcal{C}(f)\in \mathcal{I}(X, Y)$. By the assumption, we have morphisms $\iota\colon X'\rightarrow X$ and $\kappa \colon Y\rightarrow Y'$ of degree zero such that $\kappa\circ f\circ \iota$ is closed and that $\underline{\iota}$ and $\underline{\kappa}$ are  dg-isomorphisms. Therefore, the internal cone ${\rm Cone}(\kappa\circ f\circ \iota)$ of $\kappa\circ f\circ \iota$ exists in $\mathcal{C}$, which also is the internal cone of $\underline{\kappa\circ f\circ \iota}$ in $\mathcal{C}/\mathcal{I}$. As internal cones are invariant under dg-isomorphisms and  both $\underline{\iota}$ and $\underline{\kappa}$ are dg-isomorphisms in $\mathcal{C}/\mathcal{I}$, it follows that ${\rm Cone}(\kappa\circ f\circ \iota)$ is dg-isomorphic to the desired cone of $\underline{f}$ in $\mathcal{C}/\mathcal{I}$. This completes the proof.
 \end{proof}

 Denote by ${\bf dgcat}$ the category of small dg categories. The \emph{homotopy category} ${\bf Hodgcat}$ is obtained from ${\bf dgcat}$ by formally inverting quasi-equivalences. By the Dwyer-Kan model structure \cite{Tab} on ${\bf dgcat}$, the homotopy category ${\bf Hodgcat}$ has small Hom sets. Morphisms in ${\bf Hodgcat}$ are called dg quasi-functors. Here, the first two letters ``${\bf Ho}$" stand for the homotopy category in the sense of Quillen \cite{Qui}.

Assume that $\mathcal{C}$ is a small dg category. Let $\mathcal{N}$ be a full dg subcategory of $\mathcal{C}$. We denote by $\mathcal{C}/\mathcal{N}$ the \emph{dg quotient category} which is constructed as follows: first we take a semi-free resolution $\pi\colon \widetilde{\mathcal{C}}\rightarrow \mathcal{C}$ such that $\widetilde{\mathcal{C}}$ and $\mathcal{C}$ have the same objects and that $\pi$ acts on objects by the identity, see \cite[Appendix B, B.5~Lemma]{Dri}; secondly, we enlarge $\widetilde{\mathcal{C}}$ by freely adding an endomorphism $\epsilon_X$ of degree $-1$ for each object $X\in \mathcal{N}$, and set $d(\epsilon_X)={\rm Id}_X$. Here, by \emph{freely} adding $\epsilon_X$'s,  we form the dg tensor category of $\widetilde{\mathcal{C}}$ with respect to the free dg $\widetilde{\mathcal{C}}$-$\widetilde{\mathcal{C}}$-bimodule generated by these morphisms $\epsilon_X$'s, and then deform the differential of the dg tensor category by setting $d(\epsilon_X)={\rm Id}_X$. Therefore, the added endomorphism $\epsilon_X$ is a contracting homotopy for $X$. The resulting dg category is denoted by $\mathcal{C}/\mathcal{N}$. The morphism in  ${\bf Hodgcat}$  represented by the following roof
$$\mathcal{C}\stackrel{\pi}\longleftarrow \widetilde{\mathcal{C}}\stackrel{\rm inc}\longrightarrow \mathcal{C}/\mathcal{N}$$
is denoted by $q\colon \mathcal{C}\rightarrow \mathcal{C}/\mathcal{N}$, which is called the \emph{quotient dg quasi-functor}. For details on dg quotient categories, we refer to \cite[Section~4]{Kel99} and \cite[Subsection~3.1]{Dri}.

The following fundamental result is due to \cite[Theorem~3.4]{Dri}.

\begin{lem}\label{lem:dgquot}
Assume that both $\mathcal{C}$ and $\mathcal{N}$ are pretriangulated. Then so is the dg quotient category $\mathcal{C}/\mathcal{N}$, and the quotient dg quasi-functor $q$ induces a triangle equivalence
$$H^0(\mathcal{C})/H^0(\mathcal{N})\stackrel{\sim}\longrightarrow H^0(\mathcal{C}/\mathcal{N}). $$
\end{lem}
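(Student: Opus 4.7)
\medskip

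The plan is to prove the two assertions in turn, following the strategy of Drinfeld, and exploit the fact that $\mathcal{C}/\mathcal{N}$ is constructed by freely adjoining contracting homotopies $\epsilon_X$ for $X\in \mathcal{N}$ to a semi-free resolution $\widetilde{\mathcal{C}}$ of $\mathcal{C}$. Since the semi-free resolution $\pi\colon \widetilde{\mathcal{C}}\rightarrow \mathcal{C}$ is a quasi-equivalence acting by the identity on objects, we may replace $\mathcal{C}$ by $\widetilde{\mathcal{C}}$ throughout the argument; this preserves the hypothesis that $\mathcal{C}$ is pretriangulated, and allows us to work with the genuine dg category $\mathcal{C}/\mathcal{N}$ rather than only its image in ${\bf Hodgcat}$.

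For pretriangulatedness, I would first note that the inclusion ${\rm inc}\colon \widetilde{\mathcal{C}}\hookrightarrow \mathcal{C}/\mathcal{N}$ is a dg functor which is the identity on objects. Given any object $X\in \mathcal{C}/\mathcal{N}$, choose in $\widetilde{\mathcal{C}}$ (which is pretriangulated) objects $X_1, X_2$ together with homotopy shift data realizing $\Sigma X$ and $\Sigma^{-1}X$; because ${\rm inc}$ preserves all the structural identities in the definition of a cone/shift up to homotopy, the same data exhibits $X_1$ and $X_2$ as shifts of $X$ in $\mathcal{C}/\mathcal{N}$. The same argument applies to cones up to homotopy of closed degree zero morphisms: any such morphism in $\mathcal{C}/\mathcal{N}$ can be lifted to a (not necessarily closed) morphism in $\widetilde{\mathcal{C}}$, and although a direct lift need not be closed, one can modify it by an element of the dg ideal generated by the $\epsilon_X$ to make it closed, then take the internal cone in $\widetilde{\mathcal{C}}$; its image in $\mathcal{C}/\mathcal{N}$ gives the desired cone up to homotopy.

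To construct the comparison functor, observe that for $X\in \mathcal{N}$ the added morphism $\epsilon_X$ satisfies $d(\epsilon_X)={\rm Id}_X$, so each such $X$ is contractible in $\mathcal{C}/\mathcal{N}$ and therefore zero in $H^0(\mathcal{C}/\mathcal{N})$. The functor $H^0(q)\colon H^0(\mathcal{C})\rightarrow H^0(\mathcal{C}/\mathcal{N})$ thus factors through the Verdier quotient, inducing a triangle functor
$$\bar q\colon H^0(\mathcal{C})/H^0(\mathcal{N})\longrightarrow H^0(\mathcal{C}/\mathcal{N}).$$
Essential surjectivity of $\bar q$ is immediate, since $q$ is bijective on objects.

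The main obstacle is full faithfulness. The key is an explicit description of the morphism complex in $\mathcal{C}/\mathcal{N}$: any morphism $X\rightarrow Y$ is a (finite) sum of \emph{words}
$$f_n\circ \epsilon_{X_n}\circ f_{n-1}\circ \epsilon_{X_{n-1}}\circ \cdots\circ \epsilon_{X_1}\circ f_0,$$
with $X_i\in \mathcal{N}$ and $f_i\in \widetilde{\mathcal{C}}(X_i, X_{i+1})$ (with the convention $X_0=X$, $X_{n+1}=Y$). Filtering by the number of $\epsilon$'s, the Leibniz rule for the differential shows that a closed word of length $n\geq 1$ either reduces, up to a boundary, to a word of length $<n$, or else each $f_i$ is itself a closed morphism in $\widetilde{\mathcal{C}}$ and the word factors through the objects $X_i\in \mathcal{N}$. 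Taking $H^0$ therefore identifies $H^0(\mathcal{C}/\mathcal{N})(X,Y)$ with the colimit of $H^0(\mathcal{C})(X,Y')$ over morphisms $Y\rightarrow Y'$ whose mapping cone lies in $H^0(\mathcal{N})$ (or dually, a colimit over $X'\rightarrow X$). This is exactly the calculus of fractions description of the Verdier quotient $H^0(\mathcal{C})/H^0(\mathcal{N})$, so $\bar q$ is fully faithful. The bookkeeping of signs and the careful extraction of closed representatives from each filtration layer is the technically delicate point, but once the word description is in hand the comparison with roofs in the Verdier quotient is a direct matter.
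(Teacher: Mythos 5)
The paper itself offers no proof of this lemma: it is quoted verbatim from \cite[Theorem~3.4]{Dri}, so what you are attempting is a re-proof of Drinfeld's theorem, and your sketch has genuine gaps at both of its key steps. First, the pretriangulatedness argument misreads the construction: $\mathcal{C}/\mathcal{N}$ is not a quotient of $\widetilde{\mathcal{C}}$ by a dg ideal but an \emph{enlargement} of it, obtained by freely adjoining the $\epsilon_X$'s, so $\widetilde{\mathcal{C}}(X,Y)$ sits inside the bigger complex $(\mathcal{C}/\mathcal{N})(X,Y)$. A closed degree-zero morphism of $\mathcal{C}/\mathcal{N}$ genuinely involves the $\epsilon$'s; it is not a ``lift modified by an element of the dg ideal generated by the $\epsilon_X$'', and there is no meaning to ``making it closed in $\widetilde{\mathcal{C}}$''. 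Moreover $\widetilde{\mathcal{C}}$ is only pretriangulated, not strongly pretriangulated, so ``take the internal cone in $\widetilde{\mathcal{C}}$'' is not available either. The workable order of argument is the reverse of yours: one first computes the morphism complexes of the quotient (or embeds everything into dg modules via the Yoneda dg functor) and then deduces the existence of cones up to homotopy, using that such cones depend only on the homotopy class of the closed morphism.

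Second, and more seriously, the full faithfulness of $\bar q$ --- which is the actual content of the theorem --- is asserted rather than proved. Closed degree-zero elements of $(\mathcal{C}/\mathcal{N})(X,Y)$ are finite sums of words of varying lengths, and the differential mixes lengths (it both differentiates the $f_i$'s and deletes an $\epsilon$ via $d(\epsilon_{X_i})={\rm Id}_{X_i}$), so the case analysis ``a closed word either reduces up to a boundary to shorter words, or all $f_i$ are closed'' is not a valid dichotomy as stated. What is needed is the filtration-by-word-length (bar-type) computation of the cohomology of this complex --- Drinfeld's key homological lemma, and exactly the place where the semi-freeness (homotopical flatness) of $\widetilde{\mathcal{C}}$ is used --- together with a construction converting a closed sum of words factoring through finitely many objects $X_1,\dots,X_n\in\mathcal{N}$ into an honest roof: one must assemble an iterated cone on these objects, using the pretriangulated structures of $\mathcal{C}$ and $\mathcal{N}$, and show that the resulting morphism $Y\rightarrow Y'$ has cone in (the thick closure of) $H^0(\mathcal{N})$, so that the calculus-of-fractions description of the Verdier quotient applies. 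The sentence ``this is exactly the calculus of fractions description'' skips precisely this comparison; as written, the proposal restates the theorem at its crucial step rather than proving it.
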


\begin{rem}
Since the induced triangle equivalence above acts on objects by the identity, it is really an isomorphism of triangulated categories.
\end{rem}

In view of the lemma above, the following definition is natural; see \cite{Kel18, BRTV, BrDy}.

\begin{defn}
Let $R$ be a ring. The \emph{dg singularity category} of $R$ is the dg quotient category
$$\mathbf{S}_{\rm dg}(R)=C_{\rm dg}^{-, b}(R\mbox{-proj})/{C_{\rm dg}^{b}(R\mbox{-proj})}.$$
\end{defn}

By Lemma~\ref{lem:dgquot}, $\mathbf{S}_{\rm dg}(R)$ is pretriangulated such that $H^0(\mathbf{S}_{\rm dg}(R))$ is isomorphic to the singularity category $\mathbf{D}_{\rm sg}(R)$ as a triangulated category.

\subsection{The Vogel dg category}

The following notion is taken from \cite[Subsection~1.3]{AV}. Let $\mathfrak{a}$ be an additive category.

\begin{defn}
A morphism $f=(f^p)_{p\in \mathbb{Z}}\colon X\rightarrow Y$ in  $C_{\rm dg}(\mathfrak{a})$  is called \emph{bounded} if $f^p=0$ whenever the absolute value of $p$ is sufficiently large, or equivalently, if there are only finitely many $p$'s with nonzero $f^p$.
\end{defn}

We observe that all bounded morphisms from $X$ to $Y$ form a sub-complex $\overline{\rm Hom}_\mathfrak{a}(X, Y)$ of ${\rm Hom}_\mathfrak{a}(X, Y)$. The corresponding quotient complex is denoted by $\widehat{\rm Hom}_\mathfrak{a}(X, Y)$.

To describe $H^0\widehat{\rm Hom}_\mathfrak{a}(X, Y)$, we recall the following terminologies in \cite[Section~2]{BC}. A morphism $f=(f^p)_{p\in \mathbb{Z}}\colon X\rightarrow Y$ of degree zero is called an \emph{almost-cochain map} if the morphism $d^0f$ is bounded. In other words, $d_Y^p\circ f^p=f^p\circ d_X^p$ holds for almost all $p$. A morphism $f$ is called \emph{almost null-homotopic} if there is a morphism $h=(h^p)_{p\in \mathbb{Z}}\colon X\rightarrow Y$ of degree $-1$ such that $f-d^{-1}h$ is bounded. This is equivalent to the condition that $f^p=d_Y^{p-1}\circ h^p+h^{p+1}\circ d_X^p$ holds for almost all $p$. It is clear that an almost null-homotopic morphism is an almost-cochain map. We observe that
$$H^0\widehat{\rm Hom}_\mathfrak{a}(X, Y)=\frac{\{\text{almost-cochain maps}\}}{\{ \text{almost null-homotopic morphisms}\}}.$$

The idea of the Vogel dg category goes back to Vogel's work, which appears in \cite{Goi}. For a historical account, we refer to \cite[1.4.2]{AV}.

\begin{defn}\label{defn:V}
The \emph{Vogel dg category} $\mathcal{V}(\mathfrak{a})$ is defined as follows. Its objects are complexes in $\mathfrak{a}$, and the morphism complex from $X$ to $Y$ is given by  $\widehat{\rm Hom}_\mathfrak{a}(X, Y)$. The composition is induced by the one in $C_{\rm dg}(\mathfrak{a})$.
\end{defn}

We mention that the composition in $\mathcal{V}(\mathfrak{a})$ is well defined, because ${\overline {\rm Hom}}_\mathfrak{a}(-, -)$ is a dg ideal of $C_{\rm dg}(\mathfrak{a})$. In other words, the Vogel dg category is the corresponding dg factor category  of $C_{\rm dg}(\mathfrak{a})$. We denote by $\mathcal{V}^-(\mathfrak{a})$ and $\mathcal{V}^+(\mathfrak{a})$ the full dg subcategory of $\mathcal{V}(\mathfrak{a})$ formed by bounded-above and bounded-below complexes, respectively.

\begin{prop}\label{prop:spretri}
The dg categories $\mathcal{V}(\mathfrak{a})$, $\mathcal{V}^{-}(\mathfrak{a})$ and $\mathcal{V}^{+}(\mathfrak{a})$ are all strongly pretriangulated.
\end{prop}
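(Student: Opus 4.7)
The plan is to apply Lemma~\ref{lem:factor} to $\mathcal{C}=C_{\rm dg}(\mathfrak{a})$ (which is strongly pretriangulated) together with the dg ideal $\mathcal{I}=\overline{\rm Hom}_\mathfrak{a}(-,-)$, since by definition $\mathcal{V}(\mathfrak{a})=C_{\rm dg}(\mathfrak{a})/\mathcal{I}$. Internal shifts in $\mathcal{V}(\mathfrak{a})$ come for free: the standard shift dg-isomorphisms of complexes in $C_{\rm dg}(\mathfrak{a})$ remain closed isomorphisms in the quotient. So the content reduces to verifying the cone hypothesis of Lemma~\ref{lem:factor}.

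Let $f\colon X\to Y$ be a degree zero morphism with $d_{\mathcal{C}}(f)\in\overline{\rm Hom}_\mathfrak{a}(X,Y)$, i.e.\ an almost-cochain map, and pick $N$ such that $(d_Y^pf^p-f^{p+1}d_X^p)=0$ for all $|p|>N$. I would produce $\iota,\kappa$ via the ``split truncation''
$$X'=\sigma_{\geq N+2}(X)\oplus\sigma_{<-N-1}(X),\qquad Y'=\sigma_{\geq N+2}(Y)\oplus\sigma_{<-N-1}(Y),$$
with $\iota\colon X'\to X$ the canonical inclusion and $\kappa\colon Y\to Y'$ the canonical projection; both are genuine cochain maps. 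In every degree $p$ either $|p|\leq N+1$, in which case $X'^p=0$ and hence $\iota^p=0$, or $|p|\geq N+2$, in which case $(d_\mathcal{C}(f))^p=0$; thus $\kappa\circ d_\mathcal{C}(f)\circ\iota=0$, and as $\iota,\kappa$ are closed this gives $d_\mathcal{C}(\kappa\circ f\circ\iota)=0$.

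To see that $\underline{\iota}$ (symmetrically $\underline{\kappa}$) is a dg-isomorphism in $\mathcal{V}(\mathfrak{a})$, exhibit a ``piecewise identity'' $h\colon X\to X'$ that is the identity into the appropriate summand for $|p|\geq N+2$ and zero for $|p|\leq N+1$. Then $d_\mathcal{C}(h)$ is supported in the finite window $|p|\leq N+1$, hence bounded, so $\underline{h}$ is closed in $\mathcal{V}(\mathfrak{a})$; moreover $h\iota={\rm Id}_{X'}$ on the nose and $\iota h-{\rm Id}_X$ is supported in the same window, hence bounded, so $\underline{h}$ is a two-sided inverse of $\underline{\iota}$ in $\mathcal{V}(\mathfrak{a})$. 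The bounded and bounded-above/bounded-below cases for $\mathcal{V}^-(\mathfrak{a})$ and $\mathcal{V}^+(\mathfrak{a})$ follow from the same recipe: for $X,Y$ bounded-above (resp.\ bounded-below), enlarging $N$ makes the $\sigma_{\geq N+2}$ (resp.\ $\sigma_{<-N-1}$) summand vanish, keeping $X',Y'$ in the corresponding subcategory.

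The main obstacle is engineering $X',Y'$ to simultaneously achieve (i) $\kappa\circ f\circ\iota$ is a genuine closed cochain map, not merely almost, and (ii) $\underline{\iota},\underline{\kappa}$ are dg-isomorphisms in the Vogel quotient. The split truncation meets both conditions because it surgically removes the finite range of degrees where $f$ fails to commute with the differentials, and this surgery is a bounded perturbation in $C_{\rm dg}(\mathfrak{a})$, hence invisible after passing to $\mathcal{V}(\mathfrak{a})$; everything else is a routine application of Lemma~\ref{lem:factor}.
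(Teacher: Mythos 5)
Your overall strategy is exactly the paper's: reduce to Lemma~\ref{lem:factor} for the dg ideal $\overline{\rm Hom}_\mathfrak{a}(-,-)$ in $C_{\rm dg}(\mathfrak{a})$, and verify its hypothesis by splicing $X$ and $Y$ into the two-sided brutal truncations $X'=\sigma_{\geq N+2}(X)\oplus\sigma_{<-N-1}(X)$, $Y'=\sigma_{\geq N+2}(Y)\oplus\sigma_{<-N-1}(Y)$ (the paper uses $\sigma_{\geq n_0}\oplus\sigma_{\leq -n_0}$, an immaterial difference), with $\iota$ the graded inclusion and $\kappa$ the graded projection. However, there is one concretely false claim in your verification: $\iota$ and $\kappa$ are \emph{not} genuine cochain maps. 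The graded inclusion of the low-degree truncation $\sigma_{<-N-1}(X)$ into $X$ fails to commute with the differential in degree $-N-2$ (the differential $d_X^{-N-2}$ is killed in the truncation but not in $X$), and dually the graded projection of $Y$ onto $\sigma_{\geq N+2}(Y)$ fails in degree $N+1$. Only their classes in $\mathcal{V}(\mathfrak{a})$ are closed, because $d(\iota)$ and $d(\kappa)$ are concentrated in a single degree and hence bounded. Consequently your inference ``$\kappa\circ d_\mathcal{C}(f)\circ\iota=0$ and $\iota,\kappa$ are closed, hence $d_\mathcal{C}(\kappa\circ f\circ\iota)=0$'' is not justified as written: the Leibniz rule produces the extra terms $d_\mathcal{C}(\kappa)\circ f\circ\iota$ and $\kappa\circ f\circ d_\mathcal{C}(\iota)$, which you have not addressed.

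The gap is easily repaired, since the conclusion of that step is true. Either check directly, degree by degree, that $\kappa\circ f\circ\iota$ is a cochain map (this is what the paper does, using that the cut-off degrees were chosen outside the finite window where $f$ fails to commute with the differentials and that $X'$, $Y'$ vanish in the middle range), or note that the two extra Leibniz terms vanish for your choice of truncations: $d_\mathcal{C}(\iota)$ is supported in degree $-N-2$ and its image under $\kappa\circ f$ lands in $Y'^{-N-1}=0$, while $d_\mathcal{C}(\kappa)$ is supported in degree $N+1$ and $X'^{N+1}=0$. With this correction your argument matches the paper's; note also that the paper, like you, exhibits the inverse of $\underline{\iota}$ (resp.\ $\underline{\kappa}$) as the corresponding projection (resp.\ inclusion), your map $h$ being precisely that projection, and your observation that for bounded-above (resp.\ bounded-below) complexes the truncations stay in $C^{-}(\mathfrak{a})$ (resp.\ $C^{+}(\mathfrak{a})$) is the paper's remark that the cases $\mathcal{V}^{-}(\mathfrak{a})$ and $\mathcal{V}^{+}(\mathfrak{a})$ follow from the same proof.
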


\begin{proof}
We only prove that $\mathcal{V}(\mathfrak{a})$ is strongly pretriangulated, since the other cases are actually consequences of the proof presented below.

Since $C_{\rm dg}(\mathfrak{a})$ is strongly pretriangulated, it suffices to prove that the dg ideal ${\overline {\rm Hom}}_\mathfrak{a}(-, -)$ satisfies the condition in Lemma~\ref{lem:factor}. Let $f\colon X\rightarrow Y$ be a morphism of degree zero such that $d^0f$ is bounded. Then there exists a sufficiently large natural number $n_0$ such that $d_Y^p\circ f^p=f^{p+1}\circ d_X^p$ whenever $p\geq n_0$ or $p\leq -n_0$.

Consider the direct sum $X'=\sigma_{\geq n_0}(X)\oplus \sigma_{\leq -n_0}(X)$ and $Y'=\sigma_{\geq n_0}(Y)\oplus \sigma_{\leq -n_0}(Y)$ of truncated complexes. We observe that both the obvious inclusion $\iota \colon X'\rightarrow X$ and projection $\kappa\colon Y\rightarrow Y'$ represent dg-isomorphisms in $\mathcal{V}$, whose inverses are given by the corresponding projection and inclusion, respectively. The composition $\kappa\circ f\circ\iota $ is a cochain map, that is, a closed morphism of degree zero in $C_{\rm dg}(\mathfrak{a})$. This verifies the condition in Lemma~\ref{lem:factor}.
\end{proof}

We have the following immediate consequence of the proof above.

\begin{cor}\label{cor:V}
For any complex $X$, the inclusion $\sigma_{\geq 1}(X)\oplus \sigma_{\leq -1}(X)\rightarrow X$ is a dg-isomorphism in $\mathcal{V}(\mathfrak{a})$. \hfill $\square$
\end{cor}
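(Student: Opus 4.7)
The plan is to specialize the construction appearing in the proof of Proposition~\ref{prop:spretri} to $n_0=1$: I exhibit the obvious degree zero projection $\kappa \colon X \to \sigma_{\geq 1}(X) \oplus \sigma_{\leq -1}(X)$ as a two-sided $\mathcal{V}(\mathfrak{a})$-inverse of the inclusion $\iota$ in the statement, and check that both morphisms become closed of degree zero in the quotient $\mathcal{V}(\mathfrak{a}) = C_{\rm dg}(\mathfrak{a})/\overline{\rm Hom}_\mathfrak{a}(-,-)$.

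First I would verify that, although neither $\iota$ nor $\kappa$ is literally a cochain map, each becomes closed after passing to $\mathcal{V}(\mathfrak{a})$. A direct component-wise calculation shows that $d^0(\iota)$ has a single nonzero component, namely $(d^0\iota)^{-1} = d_X^{-1} \colon X^{-1} \to X^0$, while $d^0(\kappa)$ has a single nonzero component $(d^0\kappa)^{0} = -d_X^{0} \colon X^0 \to X^1$; all other components vanish because the relevant expressions $d_X^p - d_X^p$ cancel. Both defect morphisms are therefore bounded, so the images of $\iota$ and $\kappa$ in $\mathcal{V}(\mathfrak{a})$ are closed morphisms of degree zero.

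Second I would verify the two compositions in $C_{\rm dg}(\mathfrak{a})$. The composition $\kappa \circ \iota$ equals ${\rm Id}_{\sigma_{\geq 1}(X) \oplus \sigma_{\leq -1}(X)}$ on the nose. The composition $\iota \circ \kappa \colon X \to X$ is the identity in every degree $p \neq 0$ and is zero in degree $0$, so ${\rm Id}_X - \iota \circ \kappa$ is an endomorphism of $X$ supported only in degree $0$ and is therefore bounded. Passing to $\mathcal{V}(\mathfrak{a})$ we obtain $\iota \circ \kappa = {\rm Id}_X$, and together with $\kappa \circ \iota = {\rm Id}$ this exhibits the class of $\kappa$ as a two-sided inverse of the class of $\iota$. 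A closed degree zero morphism with a two-sided inverse is by definition a dg-isomorphism, finishing the proof.

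There is no real obstacle; the entire argument is accounting for the finitely many defect components of $\iota$, $\kappa$, and ${\rm Id}_X - \iota \circ \kappa$, all of which live in a narrow range of degrees around $0$ and are hence bounded. The mild subtlety worth flagging is that $\iota$ and $\kappa$ are not themselves cochain maps, so one cannot hope to construct mutually inverse morphisms in $C_{\rm dg}(\mathfrak{a})$: the inverse relationship is genuinely a phenomenon of the quotient dg category $\mathcal{V}(\mathfrak{a})$, visible only after one factors out the ideal of bounded morphisms.
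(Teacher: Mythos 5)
Your proof is correct and follows essentially the same route as the paper: Corollary~\ref{cor:V} is obtained there as the special case $n_0=1$ of the inclusion/projection pair in the proof of Proposition~\ref{prop:spretri}, which represent mutually inverse closed degree-zero morphisms in $\mathcal{V}(\mathfrak{a})$ because all the defect terms (your $d^0\iota$, $d^0\kappa$, and ${\rm Id}_X-\iota\circ\kappa$) are bounded. Your explicit component-wise verification matches the paper's argument.
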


\begin{lem}\label{lem:Ext}
Let $X$ and $Y$ be two complexes. Then the following two statements hold.
\begin{enumerate}
\item Assume that $Y$ is bounded-above. Then we have an isomorphism of complexes
$$\widehat{\rm Hom}_\mathfrak{a}(X, Y)\simeq \colim\limits_{n\rightarrow +\infty}  {\rm Hom}_\mathfrak{a}(X, \sigma_{\leq -n}(Y)),$$
where the structure maps in the colimit are induced by the projections $\sigma_{\leq -n}(Y)\rightarrow \sigma_{\leq -(n+1)}(Y)$.
\item Assume that $X$ is bounded-below. Then we have an isomorphism of complexes
$$\widehat{\rm Hom}_\mathfrak{a}(X, Y)\simeq \colim\limits_{n\rightarrow +\infty} {\rm Hom}_\mathfrak{a}(\sigma_{\geq n}(X), Y),$$
where the structure maps are induced by the inclusions $\sigma_{\geq n+1}(X)\rightarrow \sigma_{\geq n}(X)$.
\end{enumerate}
\end{lem}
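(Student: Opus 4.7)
My plan is to prove (1) by constructing explicit cochain maps $\varphi_n \colon {\rm Hom}_\mathfrak{a}(X, \sigma_{\leq -n}(Y)) \to \widehat{\rm Hom}_\mathfrak{a}(X, Y)$ compatible with the colimit system, and then verifying that the induced map $\Phi$ is a bijection in every degree. Part (2) will follow by a formally dual argument.

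For each $n$, I would define $\varphi_n$ as follows. A degree-$k$ element $g = (g^p)_p \in {\rm Hom}_\mathfrak{a}(X, \sigma_{\leq -n}(Y))^k$ extends by zero to a graded morphism $\widetilde{g} \in {\rm Hom}_\mathfrak{a}(X, Y)^k$, with $\widetilde{g}^p = g^p$ for $p+k \leq -n$ and $\widetilde{g}^p = 0$ otherwise. The assignment $g \mapsto \widetilde{g}$ is \emph{not} a cochain map in general, because $d_Y^{-n}$ is typically nonzero while $d_{\sigma_{\leq -n}(Y)}^{-n} = 0$. However, a direct calculation shows that the discrepancy $d\widetilde{g} - \widetilde{d g}$ has a single nonzero component $d_Y^{-n} \circ g^{-n-k}$ at position $p = -n-k$, and therefore lies in $\overline{\rm Hom}_\mathfrak{a}(X, Y)^{k+1}$. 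Thus $\varphi_n$, defined as extension by zero followed by projection to $\widehat{\rm Hom}_\mathfrak{a}(X, Y)$, is a cochain map. An analogous single-component calculation shows that $\varphi_{n+1}$, post-composed with the structure map from ${\rm Hom}_\mathfrak{a}(X, \sigma_{\leq -n}(Y))$, agrees with $\varphi_n$, so the $\varphi_n$ assemble into an induced cochain map $\Phi$ from the colimit.

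To show that $\Phi$ is an isomorphism in each degree, I would exploit the bounded-above hypothesis on $Y$. For surjectivity, given $f \in {\rm Hom}_\mathfrak{a}(X, Y)^k$, bounded-aboveness of $Y$ forces $f^p = 0$ for $p$ sufficiently large; truncating $f$ to the components with $p \leq -n-k$ yields $g_n \in {\rm Hom}_\mathfrak{a}(X, \sigma_{\leq -n}(Y))^k$ such that $f - \widetilde{g_n}$ is supported on the finite range $-n-k < p$, so $f \equiv \widetilde{g_n}$ modulo $\overline{\rm Hom}_\mathfrak{a}$. For injectivity, if $\varphi_n(g) = 0$ then $\widetilde{g}$ is bounded, so $g$ itself has only finitely many nonzero components, and choosing $n'$ sufficiently large the structure map to ${\rm Hom}_\mathfrak{a}(X, \sigma_{\leq -n'}(Y))$ kills all of them, whence $g$ vanishes in the colimit.

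Part (2) is handled dually: extending a morphism $g \colon \sigma_{\geq n}(X) \to Y$ by zero in degrees $p < n$ produces a discrepancy in a single degree $p = n-1$, equal to $-(-1)^k g^n \circ d_X^{n-1}$, which is again automatically bounded. The bounded-below hypothesis on $X$ is used at the surjectivity step to guarantee that the analogous truncation agrees with the original morphism up to a bounded summand. The main technical point common to both halves is that brutal truncation introduces only a one-component boundary correction; once this observation is in place, the rest is routine bookkeeping with filtered colimits of $\prod/\bigoplus$-type constructions.
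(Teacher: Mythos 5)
Your proof is correct, and it takes a more computational route than the paper's. The paper argues in the opposite direction: for each $n$, the brutal truncation gives a degreewise-split short exact sequence of complexes
$0\to {\rm Hom}_\mathfrak{a}(X, \sigma_{> -n}(Y))\to {\rm Hom}_\mathfrak{a}(X, Y) \to {\rm Hom}_\mathfrak{a}(X, \sigma_{\leq -n}(Y))\to 0$;
since $Y$ is bounded-above, the subcomplexes ${\rm Hom}_\mathfrak{a}(X, \sigma_{> -n}(Y))$ form a direct union exhausting exactly $\overline{\rm Hom}_\mathfrak{a}(X, Y)$, and passing to the filtered colimit over $n$ (which is exact) identifies the quotient $\widehat{\rm Hom}_\mathfrak{a}(X, Y)$ with $\colim_{n} {\rm Hom}_\mathfrak{a}(X, \sigma_{\leq -n}(Y))$ without ever touching individual components or differentials. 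You instead build the inverse map explicitly, via extension by zero, which forces you to observe (correctly) that this is a cochain map only up to a one-component, hence bounded, discrepancy, and then to verify bijectivity in each degree by hand; your computation of the corrections ($d_Y^{-n}\circ g^{-n-k}$ in (1), $-(-1)^k g^{n}\circ d_X^{n-1}$ in (2)) and your use of the boundedness hypotheses (needed for surjectivity, not for injectivity) are all accurate. What the paper's argument buys is brevity and the invisibility of sign/differential bookkeeping; what yours buys is an explicit description of the isomorphism in the direction one actually wants when it is later invoked (e.g.\ in the proof of Theorem~\ref{thm:Vogel}, where one checks the composite isomorphism is induced by the natural functor), plus a transparent record of where bounded-aboveness of $Y$ (resp.\ bounded-belowness of $X$) enters. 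One small wording slip: the compatibility you need is that $\varphi_{n+1}$ \emph{pre}-composed with the structure map equals $\varphi_n$, which is what your one-component calculation in fact establishes.
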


\begin{proof}
We only prove (1) since the proof of (2) is analogous. For each $n$, the inclusion  $\sigma_{> -n}(Y)\rightarrow Y$ and the projection $Y\rightarrow \sigma_{\leq -n}(Y)$ induce a short exact sequence of complexes.
\begin{align}\label{equ:Hom-ses}
0\longrightarrow {\rm Hom}_\mathfrak{a}(X, \sigma_{> -n}(Y))\longrightarrow {\rm Hom}_\mathfrak{a}(X, Y) \longrightarrow {\rm Hom}_\mathfrak{a}(X, \sigma_{\leq -n}(Y))\longrightarrow 0
\end{align}
By the assumption, $\sigma_{> -n}(Y)$ is bounded. Then we have
$${\rm Hom}_\mathfrak{a}(X, \sigma_{> -n}(Y))\subseteq \overline{\rm Hom}_\mathfrak{a}(X, Y).$$
Moreover, $\overline{\rm Hom}_\mathfrak{a}(X, Y)$ is identified with $\colim\limits_{n\rightarrow +\infty} {\rm Hom}_\mathfrak{a}(X, \sigma_{>-n}(Y))$, which is given by a direct union.

Letting $n$ vary in (\ref{equ:Hom-ses}) and taking colimits, we obtain  the following exact sequence.
\begin{align*}
0\longrightarrow \overline{\rm Hom}_\mathfrak{a}(X, Y)\longrightarrow {\rm Hom}_\mathfrak{a}(X, Y) \longrightarrow \colim\limits_{n\rightarrow +\infty} {\rm Hom}_\mathfrak{a}(X, \sigma_{\leq -n}(Y)) \longrightarrow 0
\end{align*}
By the very definition of $\widehat{\rm Hom}_\mathfrak{a}(X, Y)$, we infer the required isomorphism in (1).
\end{proof}

\begin{thm}\label{thm:Vogel}
Keep the notation as above. Then the following statements hold.
\begin{enumerate}
\item We have an orthogonal decomposition $H^0(\mathcal{V}(\mathfrak{a}))=H^0(\mathcal{V}^{-}(\mathfrak{a}))\times H^0(\mathcal{V}^{+}(\mathfrak{a}))$.
\item There are isomorphisms of triangulated categories:
$$\mathbf{K}^{-}(\mathfrak{a})/{\mathbf{K}^{b}(\mathfrak{a})}\stackrel{\sim}\longrightarrow H^0(\mathcal{V}^{-}(\mathfrak{a})) \quad \text{and} \quad \mathbf{K}^{+}(\mathfrak{a})/{\mathbf{K}^{b}(\mathfrak{a})}\stackrel{\sim}\longrightarrow H^0(\mathcal{V}^{+}(\mathfrak{a})).$$
\end{enumerate}
\end{thm}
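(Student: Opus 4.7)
The plan is to deduce part~(1) from Corollary~\ref{cor:V} together with an elementary support argument for the two Hom-vanishings, and to deduce part~(2) by descending the dg projection $C_{\rm dg}^{\pm}(\mathfrak{a})\to\mathcal{V}^{\pm}(\mathfrak{a})$ to the Verdier quotient and matching two filtered-colimit presentations of the Hom-groups.

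For part~(1), Corollary~\ref{cor:V} gives, for every complex $X$, a dg-isomorphism $\sigma_{\geq 1}(X)\oplus\sigma_{\leq -1}(X)\simeq X$ in $\mathcal{V}(\mathfrak{a})$, which displays $X$ in $H^0(\mathcal{V}(\mathfrak{a}))$ as the direct sum of an object of $H^0(\mathcal{V}^{+}(\mathfrak{a}))$ and one of $H^0(\mathcal{V}^{-}(\mathfrak{a}))$. By Remark~\ref{rem:ortho}, it then suffices to show that Hom vanishes in both directions between these two subcategories. This is an immediate support argument: if $X$ is bounded-above and $Y$ is bounded-below, then for any integer $k$ a degree-$k$ graded morphism $X\to Y$ has only finitely many potentially nonzero components $X^p\to Y^{p+k}$ (the upper bound on $p$ coming from the boundedness-above of $X$, the lower bound from the boundedness-below of $Y$), hence is bounded. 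Therefore $\widehat{\rm Hom}_\mathfrak{a}(X,Y)=0$, and symmetrically $\widehat{\rm Hom}_\mathfrak{a}(Y,X)=0$.

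For part~(2), I would take the dg projection $\pi\colon C_{\rm dg}^{-}(\mathfrak{a})\to\mathcal{V}^{-}(\mathfrak{a})$, which is a dg functor between strongly pretriangulated dg categories (Proposition~\ref{prop:spretri}); passing to $H^0$ yields a triangle functor $\mathbf{K}^{-}(\mathfrak{a})\to H^0(\mathcal{V}^{-}(\mathfrak{a}))$. Any bounded complex $B$ has bounded identity, so $B$ becomes a zero object in $H^0(\mathcal{V}^{-}(\mathfrak{a}))$; the universal property of the Verdier quotient then produces a triangle functor
$$\phi\colon \mathbf{K}^{-}(\mathfrak{a})/\mathbf{K}^{b}(\mathfrak{a})\longrightarrow H^0(\mathcal{V}^{-}(\mathfrak{a})),$$
which is the identity on objects, hence essentially surjective. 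To show it is fully faithful, I compute both sides as the same filtered colimit $\colim\limits_{n\to+\infty}{\rm Hom}_{\mathbf{K}(\mathfrak{a})}(X,\sigma_{\leq -n}(Y))$ for bounded-above $X,Y$: Lemma~\ref{lem:quot}(1) gives this presentation on the source, while Lemma~\ref{lem:Ext}(1) combined with the fact that $H^0$ commutes with filtered colimits of complexes of abelian groups gives the same presentation on the target. The second equivalence in~(2) follows by the formally dual argument using part~(2) of each of Lemmas~\ref{lem:quot} and~\ref{lem:Ext}.

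The main point requiring care is the naturality check at the very end: one must verify that $\phi$ actually matches these two colimit presentations, not merely that they produce abstractly isomorphic groups. Concretely, a cochain map $f\colon X\to\sigma_{\leq -n}(Y)$ representing a class on the source side corresponds on the target side to the graded map $\tilde{f}\colon X\to Y$ obtained by extending $f$ by zero in degrees $>-n$; one then checks that $d^0\tilde{f}$ is bounded (its only possibly nonzero component being at degree $-n$), so that $\underline{\tilde f}$ is closed in $\mathcal{V}^{-}(\mathfrak{a})$, and that the ``almost null-homotopy'' equivalence relation on the Vogel side lines up with the null-homotopy relation used in the Verdier side colimit. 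This bookkeeping is routine but is the substantive step in identifying the two Hom-groups.
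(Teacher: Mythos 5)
Your proposal is correct and follows essentially the same route as the paper: Corollary~\ref{cor:V} together with Remark~\ref{rem:ortho} for the orthogonal decomposition, and descending the dg projection to the Verdier quotient and matching the colimit presentations of Lemma~\ref{lem:quot}(1) and Lemma~\ref{lem:Ext}(1) for the equivalences in (2), including the final routine compatibility check. The only (harmless) difference is in (1): you get the Hom-vanishing by a direct support count showing that $\widehat{\rm Hom}_\mathfrak{a}$ vanishes as a complex in both directions between bounded-above and bounded-below objects, whereas the paper deduces the one direction needed for semi-orthogonality from the colimit formula of Lemma~\ref{lem:Ext}(1) and then upgrades to orthogonality via the split decomposition.
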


\begin{proof} We write $\mathcal{V}$ for $\mathcal{V}(\mathfrak{a})$ in this proof.

(1) We take any bounded-below complex $X$ and any bounded-above complex $Y$. Applying $H^0$ to the isomorphism in Lemma~\ref{lem:Ext}(1), we obtain
\begin{align*}
{\rm Hom}_{H^0(\mathcal{V})}(X, Y) &\simeq H^0(\colim\limits_{n\rightarrow +\infty}  {\rm Hom}_\mathfrak{a}(X, \sigma_{\leq -n}(Y)))\\
& \simeq \colim \limits_{n\rightarrow +\infty}  H^0({\rm Hom}_\mathfrak{a}(X, \sigma_{\leq -n}(Y)))\\
&=  \colim\limits_{n\rightarrow +\infty} {\rm Hom}_{\mathbf{K}(\mathfrak{a})}(X, \sigma_{\leq -n}(Y)).
\end{align*}
However, when $n$ is sufficiently large, the complexes $X$ and $\sigma_{\leq -n}(Y)$ have disjoint supports and thus any cochain map between them is zero. Therefore, we have
$${\rm Hom}_{H^0(\mathcal{V})}(X, Y)=0.$$
Since the internal cones of $\mathcal{V}$ are inherited from the ones in $C_{\rm dg}(\mathfrak{a})$, (\ref{tri:can}) is still an exact  triangle in $H^0(\mathcal{V})$. Then we obtain a semi-orthogonal decomposition $H^0(\mathcal{V})=\langle H^0(\mathcal{V}^{-}),  H^0(\mathcal{V}^{+})\rangle$.

By Corollary~\ref{cor:V},  we have an isomorphism
$$X\simeq \sigma_{\geq 1}(X)\oplus \sigma_{\leq -1}(X)$$
in $H^0(\mathcal{V})$. In view of Remark~\ref{rem:ortho}, we infer that the semi-orthogonal decomposition $H^0(\mathcal{V})=\langle H^0(\mathcal{V}^{-}),  H^0(\mathcal{V}^{+})\rangle$ is actually orthogonal.

(2) We only prove the first of the two equivalences. The projection dg functor $C^{-}_{\rm dg}(\mathfrak{a})\rightarrow \mathcal{V}^-$ sends bounded complexes to contractible objects. Therefore, the induced triangle functor $\mathbf{K}^{-}(\mathfrak{a})\rightarrow H^0(\mathcal{V}^-)$ vanishes on $\mathbf{K}^b(\mathfrak{a})$. The latter further induces a triangle functor $\Phi\colon \mathbf{K}^{-}(\mathfrak{a})/{\mathbf{K}^b(\mathfrak{a})}\rightarrow H^0(\mathcal{V}^-)$. It is clear that $\Phi$ acts on objects by the identity. It suffices to prove that $\Phi$ is fully faithful.

Take two bounded-above complexes $U$ and $V$. By Lemma~\ref{lem:quot}(1), we have the first isomorphism in the following isomorphisms.
\begin{align*}{\rm Hom}_{\mathbf{K}(\mathfrak{a})/\mathbf{K}^b(\mathfrak{a})}(U, V) &\simeq \colim\limits_{n\rightarrow +\infty}  {\rm Hom}_{\mathbf{K}(\mathfrak{a})}(U, \sigma_{\leq -n}(V))\\
&= \colim\limits_{n\rightarrow +\infty} H^0({\rm Hom}_\mathfrak{a}(U, \sigma_{\leq -n}(V)))\\
&\simeq H^0(\colim\limits_{n\rightarrow +\infty} {\rm Hom}_\mathfrak{a}(U, \sigma_{\leq -n}(V)))\\
&\simeq H^0 \widehat{\rm Hom}_\mathfrak{a}(U, V)={\rm Hom}_{H^0(\mathcal{V}^{-})}(U, V).
\end{align*}
Here, the second isomorphism uses (\ref{equ:Hom-complex}) and the fourth one uses Lemma~\ref{lem:Ext}(1). It is routine to verify that this composite isomorphism is induced by the functor $\Phi$. This completes the proof.
\end{proof}

When the additive category $\mathfrak{a}$ is small, we easily enhance the results above to isomorphisms in $\mathbf{Hodgcat}$.

\begin{prop}\label{prop:V}
Assume that $\mathfrak{a}$ is small. Then there are isomorphisms in $\mathbf{Hodgcat}$:
$$C_{\rm dg}^{-}(\mathfrak{a})/{C_{\rm dg}^{b}(\mathfrak{a})} \times C_{\rm dg}^{+}(\mathfrak{a})/{C_{\rm dg}^{b}(\mathfrak{a})} \simeq \mathcal{V}^-(\mathfrak{a}) \times \mathcal{V}^+(\mathfrak{a})\simeq \mathcal{V}(\mathfrak{a}).$$
\end{prop}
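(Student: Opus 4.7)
The plan is to handle the two claimed isomorphisms separately and concatenate them. Both amount to promoting the corresponding $H^0$-level statements in Theorem~\ref{thm:Vogel} to the dg level; the smallness assumption on $\mathfrak{a}$ is used only to guarantee that Drinfeld's construction of the dg quotient in Lemma~\ref{lem:dgquot} applies.

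For the second isomorphism $\mathcal{V}^-(\mathfrak{a})\times\mathcal{V}^+(\mathfrak{a})\simeq\mathcal{V}(\mathfrak{a})$, I would write down an explicit strict dg functor $\Psi$ sending $(A,B)$ to the direct sum $A\oplus B$ and acting diagonally on morphism complexes. The key point is that for $A,A'$ bounded-above and $B,B'$ bounded-below, the two cross hom-complexes $\widehat{\rm Hom}_{\mathfrak{a}}(A,B')$ and $\widehat{\rm Hom}_{\mathfrak{a}}(B,A')$ vanish not merely in cohomology but as complexes: a short support-counting check shows that every graded morphism of any fixed degree between such complexes has only finitely many nonzero components and hence is already bounded. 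This makes $\Psi$ a bijection on morphism complexes, and Corollary~\ref{cor:V} supplies essential surjectivity up to dg-isomorphism, so $\Psi$ is a strict dg-equivalence and hence an isomorphism in $\mathbf{Hodgcat}$.

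For the first isomorphism $C_{\rm dg}^{\pm}(\mathfrak{a})/C_{\rm dg}^{b}(\mathfrak{a})\simeq\mathcal{V}^{\pm}(\mathfrak{a})$, I would treat the $-$ case and argue symmetrically for $+$. The projection dg functor $\pi\colon C_{\rm dg}^{-}(\mathfrak{a})\to\mathcal{V}^{-}(\mathfrak{a})$ annihilates bounded complexes: a bounded complex $Y$ satisfies $\widehat{\rm Hom}_{\mathfrak{a}}(Y,Y)=0$ by the same support-counting reasoning, so $\pi(Y)$ is already the zero object. The universal property of the dg quotient therefore produces a canonical dg quasi-functor $\Phi\colon C_{\rm dg}^{-}(\mathfrak{a})/C_{\rm dg}^{b}(\mathfrak{a})\to\mathcal{V}^{-}(\mathfrak{a})$ in $\mathbf{Hodgcat}$. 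Both sides are pretriangulated — the source by Lemma~\ref{lem:dgquot} and the target by Proposition~\ref{prop:spretri} — and, unwinding the construction, $H^0(\Phi)$ coincides with the triangle equivalence $\mathbf{K}^{-}(\mathfrak{a})/\mathbf{K}^{b}(\mathfrak{a})\xrightarrow{\sim}H^0(\mathcal{V}^{-}(\mathfrak{a}))$ of Theorem~\ref{thm:Vogel}(2). Lemma~\ref{lem:CC} then upgrades $\Phi$ to a quasi-equivalence, and taking the dg product of the $\pm$ cases yields the first claimed isomorphism.

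The main technical hurdle is the verification that $H^0(\Phi)$ really matches the functor produced in Theorem~\ref{thm:Vogel}(2): the source of $\Phi$ is Drinfeld's dg quotient, whose $H^0$ is identified with the Verdier quotient only via the comparison of Lemma~\ref{lem:dgquot}, so one must unwind that identification to check that passage to $H^0$ is compatible with the universal property applied to the projection $\pi$. Once this bookkeeping is done, the remainder is formal.
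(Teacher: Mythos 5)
Your proposal is correct and takes essentially the same route as the paper: both isomorphisms are reduced to Theorem~\ref{thm:Vogel} by combining Lemma~\ref{lem:dgquot} (to identify $H^0$ of the dg quotients with the Verdier quotients) with Lemma~\ref{lem:CC} (to upgrade the $H^0$-equivalences to quasi-equivalences), exactly as in the paper's proof. The only cosmetic difference is that for $\mathcal{V}^-(\mathfrak{a})\times\mathcal{V}^+(\mathfrak{a})\simeq\mathcal{V}(\mathfrak{a})$ you verify strict vanishing of the cross Hom-complexes directly (the same support argument underlying Corollary~\ref{cor:V} and Theorem~\ref{thm:Vogel}(1)) rather than quoting the orthogonal decomposition at the $H^0$ level, which is fine.
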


\begin{proof}
We observe that the orthogonal decomposition in Theorem~\ref{thm:Vogel}(1) implies that the natural dg functor
$$\mathcal{V}^-(\mathfrak{a}) \times \mathcal{V}^+(\mathfrak{a})\longrightarrow \mathcal{V}(\mathfrak{a}),\; (X, Y)\mapsto X\oplus Y$$
induces a triangle equivalence $H^0(\mathcal{V}^-(\mathfrak{a})\times \mathcal{V}^+(\mathfrak{a}))\simeq H^0(\mathcal{V}(\mathfrak{a}))$. By Lemma~\ref{lem:dgquot}, we identify $H^0(C_{\rm dg}^{-}(\mathfrak{a})/{C_{\rm dg}^{b}(\mathfrak{a})})$ with $\mathbf{K}^-(\mathfrak{a})/{\mathbf{K}^b(\mathfrak{a})}$, and $H^0(C_{\rm dg}^{+}(\mathfrak{a})/{C_{\rm dg}^{b}(\mathfrak{a})})$ with $\mathbf{K}^+(\mathfrak{a})/{\mathbf{K}^b(\mathfrak{a})}$. Now for the required isomorphisms, we  just combine Lemma~\ref{lem:CC} with the results in Theorem~\ref{thm:Vogel}.
\end{proof}

\section{The singularity category}

In this section, we will study the Hom groups in the singularity category of a ring. We will recall Buchweitz's theorem \cite{Buc} on the singularity category of a Gorenstein ring, and report some progress on the Singular Presilting Conjecture \cite{CHQW}.

\subsection{Two long exact sequences}

Let $R$ be a ring. Recall that $\mathcal{V}(R\mbox{-proj})$ is the Vogel dg category of $R\mbox{-proj}$ as in Definition \ref{defn:V}. We consider its full dg subcategory $\mathcal{V}^{-, b}(R\mbox{-proj})$ consisting of bounded-above complexes with bounded cohomologies.

We relate the Vogel dg category to the dg singularity category.

\begin{prop}\label{prop:sing-V}
There is an isomorphism
$$\mathbf{S}_{\rm dg}(R)\simeq \mathcal{V}^{-, b}(R\mbox{-{\rm proj}})$$
 in $\mathbf{Hodgcat}$, which induces an isomorphism $\mathbf{D}_{\rm sg}(R)\simeq H^0(\mathcal{V}^{-, b}(R\mbox{-{\rm proj}}))$ of triangulated categories.
\end{prop}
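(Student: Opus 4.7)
The plan is to induce the claimed isomorphism from the projection dg functor
\[
\pi\colon C^{-,b}_{\rm dg}(R\mbox{-proj}) \longrightarrow \mathcal{V}^{-,b}(R\mbox{-proj})
\]
via the universal property of the dg quotient, and then to invoke Lemma~\ref{lem:CC} after checking that the induced $H^0$-level functor is a triangle equivalence.

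First, I observe that any bounded complex $B$ is a zero object of $\mathcal{V}(R\mbox{-proj})$: its identity morphism is bounded and hence vanishes in $\widehat{\rm Hom}_{R\mbox{-proj}}(B, B)$, so $B$ is in particular contractible. Consequently $\pi$ sends every object of $C^b_{\rm dg}(R\mbox{-proj})$ to a contractible object, and the universal property of the dg quotient (see \cite[Section~4]{Kel99}, \cite[Subsection~3.1]{Dri}) yields a factorization through the quotient dg quasi-functor $q\colon C^{-,b}_{\rm dg}(R\mbox{-proj}) \to \mathbf{S}_{\rm dg}(R)$, producing a dg quasi-functor $\Xi\colon \mathbf{S}_{\rm dg}(R) \to \mathcal{V}^{-,b}(R\mbox{-proj})$ in $\mathbf{Hodgcat}$ which acts on objects by the identity.

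Next, $\mathbf{S}_{\rm dg}(R)$ is pretriangulated by Lemma~\ref{lem:dgquot}, while $\mathcal{V}^{-,b}(R\mbox{-proj})$ is strongly pretriangulated: Proposition~\ref{prop:spretri} supplies the internal shifts and cones in $\mathcal{V}^-(R\mbox{-proj})$, and the full subcategory cut out by boundedness of cohomology is closed under both, since cohomology is additive on mapping cones. Hence by Lemma~\ref{lem:CC} it suffices to show that $H^0(\Xi)\colon \mathbf{D}_{\rm sg}(R) \to H^0(\mathcal{V}^{-,b}(R\mbox{-proj}))$ is a triangle equivalence; essential surjectivity is immediate since $\Xi$ is the identity on objects. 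For full faithfulness, I would recycle the computation at the end of the proof of Theorem~\ref{thm:Vogel}(2): for $U, V \in \mathbf{K}^{-,b}(R\mbox{-proj})$, the projections $V \to \sigma_{\leq -n}(V)$ become isomorphisms in $\mathbf{D}_{\rm sg}(R)$ for $n \gg 0$ (cf.\ (\ref{iso:proj})), and the argument of Lemma~\ref{lem:quot}(1) adapts to show this family is cofinal in the relevant coslice category. Combined with Lemma~\ref{lem:Ext}(1) and the identities (\ref{equ:Hom-complex}), this yields the chain
\begin{align*}
{\rm Hom}_{\mathbf{D}_{\rm sg}(R)}(U, V)
&\simeq \colim\limits_{n\rightarrow +\infty}\; {\rm Hom}_{\mathbf{K}(R\mbox{-proj})}(U, \sigma_{\leq -n}(V)) \\
&\simeq H^0\,\widehat{\rm Hom}_{R\mbox{-proj}}(U, V) = {\rm Hom}_{H^0(\mathcal{V}^{-,b}(R\mbox{-proj}))}(U, V),
\end{align*}
and a routine inspection confirms that the resulting composite is induced by $\Xi$.

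The main obstacle will be verifying that the proof of Lemma~\ref{lem:quot}(1) really transfers to the smaller Verdier quotient $\mathbf{K}^{-,b}(R\mbox{-proj})/\mathbf{K}^b(R\mbox{-proj})$: the projections $V \to \sigma_{\leq -n}(V)$ must remain cofinal in the coslice category of morphisms out of $V$ whose cone lies in $\mathbf{K}^b(R\mbox{-proj})$. This reduces to checking that each $\sigma_{\leq -n}(V)$ still lies in $\mathbf{K}^{-,b}(R\mbox{-proj})$ and that its cone lies in $\mathbf{K}^b(R\mbox{-proj})$ for $n \gg 0$; both hold because $V$ is bounded-above with bounded cohomology and each component of $V$ is finitely generated projective.
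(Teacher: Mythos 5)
Your proposal is correct and follows essentially the same route as the paper: the projection dg functor kills bounded complexes, factors through the dg quotient, and one concludes via Lemma~\ref{lem:dgquot}, Lemma~\ref{lem:CC} and the Hom-colimit computation of Theorem~\ref{thm:Vogel}(2). The only difference is one of bookkeeping: the paper simply restricts the already-established object-preserving isomorphism $C^{-}_{\rm dg}(R\mbox{-proj})/C^{b}_{\rm dg}(R\mbox{-proj})\simeq \mathcal{V}^-(R\mbox{-proj})$ from Proposition~\ref{prop:V} to the $(-,b)$ objects, whereas you re-run the cofinality and colimit argument of Lemmas~\ref{lem:quot} and~\ref{lem:Ext} inside $\mathbf{K}^{-,b}(R\mbox{-proj})/\mathbf{K}^{b}(R\mbox{-proj})$, which works and whose extra checks you correctly identify.
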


\begin{proof}
We apply the proof of Proposition~\ref{prop:V} to $\mathfrak{a}=R\mbox{-proj}$, and obtain an isomorphism
$$C_{\rm dg}^{-}(R\mbox{-proj})/{C_{\rm dg}^{b}(R\mbox{-proj})}\simeq \mathcal{V}^-(R\mbox{-proj})$$
in ${\bf Hodgcat}$. Since this isomorphism acts on objects on the identity, it restricts to the required isomorphism.
$$\mathbf{S}_{\rm dg}(R)=C_{\rm dg}^{-,b}(R\mbox{-proj})/{C_{\rm dg}^{b}(R\mbox{-proj})}\simeq \mathcal{V}^{-,b}(R\mbox{-proj})$$
The induced isomorphism of triangulated categories follows from Lemma~\ref{lem:dgquot}.
\end{proof}

The following consideration is similar to the one in Proposition~\ref{prop:semi-ortho2}.

\begin{rem}
Denote by $C_{\rm dg}^{+, B}(R\mbox{-proj})$ the full dg subcategory of $C_{\rm dg}^{+}(R\mbox{-proj})$ formed by complexes $P$ satisfying $H^{-n}(P^*)=0$ for $n\gg 0$. Here, $P^*={\rm Hom}_R(P, R)$ is the dual complex of $P$. Similarly, we have the full dg subcategory $\mathcal{V}^{+, B}(R\mbox{-proj})$ of $\mathcal{V}^{+}(R\mbox{-proj})$.

The duality ${\rm Hom}_{R^{\rm op}}(-, R)$ identifies $\mathbf{S}_{\rm dg}(R^{\rm op})^{\rm op}$ with $C_{\rm dg}^{+, B}(R\mbox{-proj})/{C_{\rm dg}^{b}(R\mbox{-proj})}$. The proof of Proposition~\ref{prop:V} yields an isomorphism
$$C_{\rm dg}^{+, B}(R\mbox{-proj})/{C_{\rm dg}^{b}(R\mbox{-proj})}\simeq \mathcal{V}^{+, B}(R\mbox{-proj}).$$
In summary, we have an isomorphism
$$\mathbf{S}_{\rm dg}(R^{\rm op})^{\rm op} \simeq \mathcal{V}^{+, B}(R\mbox{-proj})$$
in ${\bf Hodgcat}$. We mention that  the homotopy category
$$H^0(C_{\rm dg}^{+, B}(R\mbox{-proj}))=\mathbf{K}^+(R\mbox{-proj})\cap \mathbf{K}^B(R\mbox{-proj})$$
appears in \cite[Proposition~7.12]{Nee08}.
\end{rem}

For any two complexes $P$ and $Q$ in $\mathbf{K}^{-, b}(R\mbox{-proj})$, we consider the dual complex $P^*={\rm Hom}_R(P, R)$, which is a bounded-below complex of finitely generated projective right $R$-modules. For each integer $n$, $H^{-n}(P^*\otimes_R Q)$ is the $n$-th hyper-Tor group and is denoted by ${\rm Tor}^R_{n}(P^*, Q)$.

The following long exact sequence extends the one in \cite[Theorem~6.2.5 (3)]{Buc}.

\begin{thm}\label{thm:les-1}
For any two complexes $P$ and $Q$ in $\mathbf{K}^{-, b}(R\mbox{-}{\rm proj})$, there is a long exact sequence of abelian groups,
\[\xymatrix{
\cdots \ar[r] & {\rm Tor}^R_{-n}(P^*, Q) \ar[r] & {\rm Hom}_{\mathbf{K}^{-, b}(R\mbox{-}{\rm proj})}(P, \Sigma^n(Q))\ar[dll]\\
 {\rm Hom}_{\mathbf{D}_{\rm sg}(R)}(P, \Sigma^n(Q))\ar[r] &
{\rm Tor}^R_{-n-1}(P^*, Q) \ar[r] & \cdots
}\]
where the slanted arrow is induced by the quotient functor $\mathbf{K}^{-, b}(R\mbox{-{\rm proj}})\rightarrow \mathbf{D}_{\rm sg}(R)$.
\end{thm}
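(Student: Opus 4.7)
The plan is to extract the sequence from the short exact sequence of Hom-complexes
\[
0\longrightarrow \overline{\rm Hom}_R(P, Q)\longrightarrow {\rm Hom}_R(P, Q)\longrightarrow \widehat{\rm Hom}_R(P, Q)\longrightarrow 0
\]
by passing to cohomology, and then to identify each term of the resulting long exact sequence with the three desired groups.

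First I would identify the bounded Hom-complex $\overline{\rm Hom}_R(P, Q)$ with the tensor product complex $P^*\otimes_R Q$. By definition, a degree-$n$ element of $\overline{\rm Hom}_R(P, Q)$ is a family $(f^p)_{p\in\mathbb{Z}}$ with $f^p\in {\rm Hom}_R(P^p, Q^{n+p})$ and only finitely many nonzero components, so
\[
\overline{\rm Hom}_R(P, Q)^n=\bigoplus_{p\in\mathbb{Z}} {\rm Hom}_R(P^p, Q^{n+p}).
\]
Since each $P^p$ is finitely generated projective, the canonical map $(P^p)^*\otimes_R Q^{n+p}\to {\rm Hom}_R(P^p, Q^{n+p})$ is an isomorphism, and reindexing by $i=-p$ gives exactly $(P^*\otimes_R Q)^n$. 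A routine sign check shows the differential on $\overline{\rm Hom}_R(P, Q)$ coincides with the tensor product differential, so $\overline{\rm Hom}_R(P, Q)\simeq P^*\otimes_R Q$ as complexes. In particular $H^n\overline{\rm Hom}_R(P, Q)\simeq {\rm Tor}^R_{-n}(P^*, Q)$.

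Next I would identify the cohomology of the other two terms. By (\ref{equ:Hom-complex}),
\[
H^n{\rm Hom}_R(P, Q)={\rm Hom}_{\mathbf{K}(R\mbox{-}{\rm Mod})}(P, \Sigma^n(Q))={\rm Hom}_{\mathbf{K}^{-, b}(R\mbox{-}{\rm proj})}(P, \Sigma^n(Q)),
\]
the last equality holding because $\mathbf{K}^{-, b}(R\mbox{-}{\rm proj})$ is a full subcategory of $\mathbf{K}(R\mbox{-}{\rm Mod})$. For the quotient complex, the definition of the Vogel dg category gives
\[
H^n\widehat{\rm Hom}_R(P, Q)={\rm Hom}_{H^0(\mathcal{V}(R\mbox{-}{\rm proj}))}(P, \Sigma^n(Q)),
\]
and since $P$ and $\Sigma^n(Q)$ both lie in $\mathcal{V}^{-, b}(R\mbox{-}{\rm proj})$, this coincides with ${\rm Hom}_{H^0(\mathcal{V}^{-, b}(R\mbox{-}{\rm proj}))}(P, \Sigma^n(Q))$, which by Proposition~\ref{prop:sing-V} is naturally isomorphic to ${\rm Hom}_{\mathbf{D}_{\rm sg}(R)}(P, \Sigma^n(Q))$.

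Finally I would observe that the map in the long exact sequence from ${\rm Hom}_{\mathbf{K}^{-, b}(R\mbox{-}{\rm proj})}(P, \Sigma^n(Q))$ to ${\rm Hom}_{\mathbf{D}_{\rm sg}(R)}(P, \Sigma^n(Q))$ is induced by the projection ${\rm Hom}_R(P, Q)\to \widehat{\rm Hom}_R(P, Q)$, which is exactly the dg projection that, combined with the identification of Proposition~\ref{prop:sing-V}, realizes the Verdier quotient functor on morphism groups; this makes the slanted arrow the quotient functor as claimed. The main technical point is the bookkeeping in the first step: checking that the direct-sum description of $\overline{\rm Hom}$ really gives the tensor product complex with the correct sign conventions, and then verifying that the cohomological identification of $\widehat{\rm Hom}$ via Proposition~\ref{prop:sing-V} is natural in $P$ and $Q$ so that the resulting long exact sequence is genuinely the one stated.
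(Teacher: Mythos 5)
Your proposal is correct and follows essentially the same route as the paper: the paper also identifies $\overline{\rm Hom}_R(P,Q)$ with $P^*\otimes_R Q$ (writing the canonical isomorphism explicitly), applies the identification (\ref{equ:Hom-complex}) for the middle term and Proposition~\ref{prop:sing-V} for the quotient term, and takes the long exact cohomology sequence of the short exact sequence of complexes. The only difference is cosmetic: you describe the first identification via the direct-sum decomposition of bounded morphisms, while the paper writes down the map $f\otimes x\mapsto (y\mapsto (-1)^{|x|\cdot|y|}f(y)x)$ directly.
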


\begin{proof}
We observe that the following canonical map
$$P^*\otimes_R Q \longrightarrow \overline{\rm Hom}_R(P, Q), \; f\otimes x\mapsto (y\mapsto (-1)^{|x|\cdot |y|}f(y)x),$$
is an isomorphism of complexes. Therefore,  we have a short exact sequence of complexes.
$$0\longrightarrow P^*\otimes_R Q \longrightarrow {\rm Hom}_R(P, Q)\longrightarrow \widehat{\rm Hom}_R(P, Q)\longrightarrow 0$$
By (\ref{equ:Hom-complex}), we identify ${\rm Hom}_{\mathbf{K}^{-, b}(R\mbox{-}{\rm proj})}(P, \Sigma^n(Q))$ with $H^n({\rm Hom}_R(P, Q))$; by  Proposition~\ref{prop:sing-V}, we identify ${\rm Hom}_{\mathbf{D}_{\rm sg}(R)}(P, \Sigma^n(Q))$ with $H^n(\widehat{\rm Hom}_R(P, Q))$. Then the associated long exact sequence of the exact sequence above yields the required one.
\end{proof}

We assume that the ring $R$ is left coherent. We identity $\mathbf{D}^b(R\mbox{-mod})$ with $\mathbf{K}^{-, b}(R\mbox{-{\rm proj}})$. Therefore, we have the following identification.
$$\mathbf{D}_{\rm sg}(R)=\mathbf{D}^b(R\mbox{-mod})/{\mathbf{K}^{b}(R\mbox{-{\rm proj}})}$$
In particular, we will identify an $R$-module $M$ with its projective resolution, and then view $M$ as an object in $\mathbf{D}_{\rm sg}(R)$. For each $R$-module $M$, we write $M^\vee=\mathbb{R}{\rm Hom}_R(M, R)$, which is an object in $\mathbf{D}(R^{\rm op}\mbox{-Mod})$.

\begin{cor}\label{cor:les-2}
Let $R$ be a left coherent ring and $M, N\in R\mbox{-{\rm mod}}$. Then there is a long exact sequence
\[\xymatrix{
\cdots \ar[r] & {\rm Tor}^R_{-n}(M^\vee, N) \ar[r] & {\rm Ext}_R^n(M, N)\ar[dll]\\
 {\rm Hom}_{\mathbf{D}_{\rm sg}(R)}(M, \Sigma^n(N))\ar[r] &
{\rm Tor}^R_{-n-1}(M^\vee, N) \ar[r] & \cdots
}\]
where the slanted arrow is induced by the quotient functor $\mathbf{D}^b(R\mbox{-{\rm mod}})\rightarrow \mathbf{D}_{\rm sg}(R)$. Consequently, for each $n\leq -2$, we have an isomorphism
$${\rm Hom}_{\mathbf{D}_{\rm sg}(R)}(M, \Sigma^n(N)) \stackrel{\sim}\longrightarrow  {\rm Tor}^R_{-(n+1)}(M^\vee, N).$$
\end{cor}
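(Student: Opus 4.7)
The natural plan is to specialize Theorem~\ref{thm:les-1} by choosing well-behaved resolutions. Since $R$ is left coherent, the finitely presented modules $M$ and $N$ admit projective resolutions by finitely generated projective modules; I would fix such resolutions $P\to M$ and $Q\to N$, so that $P, Q\in \mathbf{K}^{-, b}(R\mbox{-proj})$ and $P\simeq M$, $Q\simeq N$ under the identification $\mathbf{D}^b(R\mbox{-mod})\simeq \mathbf{K}^{-, b}(R\mbox{-proj})$. Applying Theorem~\ref{thm:les-1} to this pair then yields a long exact sequence; what remains is to match its three families of terms with those in the corollary.

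First, ${\rm Hom}_{\mathbf{K}^{-, b}(R\mbox{-proj})}(P, \Sigma^n(Q))={\rm Hom}_{\mathbf{D}^b(R\mbox{-mod})}(M, \Sigma^n(N))={\rm Ext}^n_R(M, N)$, which is the middle term. For the Tor terms, observe that $P^*={\rm Hom}_R(P, R)$ is a bounded-above complex of finitely generated projective right $R$-modules which represents $M^\vee=\mathbb{R}{\rm Hom}_R(M, R)$ in $\mathbf{D}(R^{\rm op}\mbox{-Mod})$. Since $P^*$ and $Q$ are each bounded-above complexes of projective (hence K-flat) modules, the ordinary tensor product $P^*\otimes_R Q$ already computes the derived tensor product $M^\vee\otimes^{\mathbb{L}}_R N$. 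Consequently the hyper-Tor group $H^{-n}(P^*\otimes_R Q)$ appearing in Theorem~\ref{thm:les-1} agrees with ${\rm Tor}^R_n(M^\vee, N)$ as written in the corollary, and this substitution produces the stated long exact sequence.

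For the consequence, I would extract from the long exact sequence the segment
\[
{\rm Ext}^n_R(M, N)\to {\rm Hom}_{\mathbf{D}_{\rm sg}(R)}(M, \Sigma^n(N))\to {\rm Tor}^R_{-n-1}(M^\vee, N)\to {\rm Ext}^{n+1}_R(M, N).
\]
Since $M$ and $N$ are modules concentrated in degree zero, ${\rm Ext}^k_R(M, N)$ vanishes for every $k<0$; for $n\leq -2$ both $n$ and $n+1$ are negative, so the flanking Ext-terms vanish and the middle arrow becomes an isomorphism. The only mild subtlety in the whole argument is the identification of $H^{-n}(P^*\otimes_R Q)$ with the derived Tor, which is routine once one records the K-flatness of $P^*$ as a complex of projective right modules rather than treating it as a resolution in the traditional sense.
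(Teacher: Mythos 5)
Your proof is correct and follows essentially the same route as the paper: replace $M$ and $N$ by resolutions $P, Q$ of finitely generated projectives, apply Theorem~\ref{thm:les-1}, identify $M^\vee$ with $P^*$ and ${\rm Ext}^n_R(M,N)$ with ${\rm Hom}_{\mathbf{K}^{-,b}(R\mbox{-}{\rm proj})}(P,\Sigma^n(Q))$, and obtain the final isomorphism from the vanishing of ${\rm Ext}^n_R(M,N)$ for $n\leq -1$. One small correction: $P^*$ is a bounded-\emph{below} (not bounded-above) complex of projectives, but this is harmless, since the K-flatness of $Q$ alone (a bounded-above complex of projectives) already ensures that $P^*\otimes_R Q$ computes the derived tensor product $M^\vee\otimes_R^{\mathbb{L}}N$.
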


Here, we recall that ${\rm Ext}_R^n(M, N)=0$ for $n<0$ and ${\rm Ext}_R^0(M, N)={\rm Hom}_R(M, N)$.

\begin{proof}
We replace $M$ and $N$ by their projective resolutions $P$ and $Q$, respectively. Then $M^\vee$ is identified by $P^*$, and for each integer $n$, ${\rm Ext}_R^n(M, N)$ is isomorphic to ${\rm Hom}_{\mathbf{K}^{-, b}(R\mbox{-}{\rm proj})}(P, \Sigma^n(Q))$. Then the result follows immediately from Theorem~\ref{thm:les-1}. For the consequence, we just use the fact that ${\rm Ext}_R^n(M, N)=0$ for $n\leq -1$.
\end{proof}

\subsection{Buchweitz's theorem}
In this subsection, we assume that $R$ is a left coherent ring. For $M, N\in R\mbox{-mod}$, we have the following canonical exact sequence,
$$M^*\otimes_R N\xrightarrow{{\phi}_{M, N}}{\rm Hom}_R(M, N)\stackrel{\rm can}\longrightarrow \underline{\rm Hom}_R(M, N)\longrightarrow 0$$
where $\phi_{M, N}$ sends $f\otimes y$ to the morphism $(x\mapsto f(x)y)$ and the map ``can" is the canonical projection. Here, $M^*={\rm Hom}_R(M, R)$ is the dual module of $M$,  and  $\underline{\rm Hom}_R(M, N)$ denotes the Hom group in the stable category $R\mbox{-\underline{mod}}$.

We consider the following full subcategory of $R\mbox{-mod}$.
$${^\perp R}=\{M\in R\mbox{-mod}\; |\; {\rm Ext}_R^n(M, R)=0 \mbox{ for any }n\geq 1\}$$

The following result is partly due to \cite[Proposition~1.21]{Orl}; see also \cite[Lemma~3.4]{IY}.

\begin{prop}\label{prop:les-3}
Let $M, N\in R\mbox{-{\rm mod}}$ with $M\in {^\perp R}$. Then we have isomorphisms
$${\rm Hom}_{\mathbf{D}_{\rm sg}(R)}(M, \Sigma^n(N))\simeq \left\{ \begin{aligned}
                                                                    &{\rm Tor}^R_{-(n+1)}(M^*, N), &n\leq -2;\\
                                                                    & {\rm Ker}(\phi_{M, N}),& n=-1;\\
                                                                     & \underline{\rm Hom}_R(M, N), &n=0;\\
                                                                     & {\rm Ext}^n_R(M, N), &n\geq 1. \end{aligned}\right.$$

\end{prop}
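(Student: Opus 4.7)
The plan is to deduce this directly from Corollary~\ref{cor:les-2} by exploiting the hypothesis $M \in {}^\perp R$ to simplify $M^\vee$. Since $\mathrm{Ext}_R^n(M,R)=0$ for $n\geq 1$, a projective resolution $P \to M$ satisfies $H^n(P^*)=0$ for $n\geq 1$ and $H^0(P^*)=M^*$. Hence $M^\vee \simeq M^*$ in $\mathbf{D}(R^{\mathrm{op}}\text{-Mod})$, and consequently $\mathrm{Tor}_k^R(M^\vee,N) \simeq \mathrm{Tor}_k^R(M^*,N)$ for every $k$. In particular this group vanishes for $k<0$, since $M^*$ is an honest module.

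With this in hand, the cases $n\geq 1$ and $n\leq -2$ are immediate from the long exact sequence of Corollary~\ref{cor:les-2}. For $n\geq 1$ the flanking Tor groups $\mathrm{Tor}^R_{-n}(M^*,N)$ and $\mathrm{Tor}^R_{-n-1}(M^*,N)$ both vanish, so the connecting map $\mathrm{Ext}^n_R(M,N)\to \mathrm{Hom}_{\mathbf{D}_{\mathrm{sg}}(R)}(M,\Sigma^n N)$ is an isomorphism. For $n\leq -2$, the quoted consequence of Corollary~\ref{cor:les-2} gives the desired isomorphism with $\mathrm{Tor}^R_{-(n+1)}(M^*,N)$ directly.

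The remaining cases $n=0$ and $n=-1$ are handled by inspecting the portion of the long exact sequence around degree zero, namely
\[
0 = \mathrm{Ext}^{-1}_R(M,N) \to \mathrm{Hom}_{\mathbf{D}_{\mathrm{sg}}(R)}(M,\Sigma^{-1}N) \to M^*\otimes_R N \xrightarrow{\alpha} \mathrm{Hom}_R(M,N) \to \mathrm{Hom}_{\mathbf{D}_{\mathrm{sg}}(R)}(M,N) \to 0,
\]
where the trailing zero uses $\mathrm{Tor}_{-1}^R(M^*,N)=0$. The key step is to identify the map $\alpha$ with $\phi_{M,N}$. I would do this by unpacking the proof of Theorem~\ref{thm:les-1}: $\alpha$ is induced on $H^0$ by the canonical inclusion $P^*\otimes_R Q \hookrightarrow \mathrm{Hom}_R(P,Q)$, $f\otimes x \mapsto \bigl(y\mapsto (-1)^{|x||y|}f(y)x\bigr)$. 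In cohomological degree zero only the components with $|x|=|y|=0$ contribute, yielding exactly $\phi_{M,N}$ after the usual identifications $H^0(P^*\otimes_R Q)=M^*\otimes_R N$ and $H^0(\mathrm{Hom}_R(P,Q))=\mathrm{Hom}_R(M,N)$. The kernel then gives the $n=-1$ formula, and the cokernel, which is the definition of $\underline{\mathrm{Hom}}_R(M,N)$, gives the $n=0$ formula.

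The main (minor) obstacle is precisely this degree-zero identification: tracing through the signs and the resolution-level formula for $\alpha$ to recognize it as $\phi_{M,N}$ rather than some twist. Once that is secured, everything else is extracted mechanically from the long exact sequence together with the vanishing $\mathrm{Tor}^R_k(M^*,N)=0$ for $k<0$.
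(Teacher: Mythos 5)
Your proposal is correct and follows essentially the same route as the paper: identify $M^\vee$ with $M^*$ using $M\in{}^\perp R$, read off the cases $n\geq 1$ and $n\leq -2$ from the long exact sequence of Corollary~\ref{cor:les-2}, and extract the four-term exact sequence around degree zero, identifying the middle map with $\phi_{M,N}$ (the paper simply asserts this identification, which you verify by unwinding the inclusion $P^*\otimes_R Q\hookrightarrow{\rm Hom}_R(P,Q)$ from the proof of Theorem~\ref{thm:les-1}). No gaps.
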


\begin{proof}
By the assumption on $M$, we identify $M^\vee$ with the dual module $M^*$. Then the isomorphisms for $n\leq -2$ are contained already in Corollary~\ref{cor:les-2}.  The cases where $n\geq 1$ follow from the long exact sequence in Corollary~\ref{cor:les-2}, since ${\rm Tor}^R_{-i}(M^*, N)=0$ for $i\geq 1$. We observe from the same long exact sequence the following one
$$0\rightarrow {\rm Hom}_{\mathbf{D}_{\rm sg}(R)}(M, \Sigma^{-1}(N))\rightarrow M^*\otimes_R N \stackrel{\phi}\rightarrow {\rm Hom}_R(M, N)\rightarrow {\rm Hom}_{\mathbf{D}_{\rm sg}(R)}(M, N)\rightarrow 0.$$
We observe that $\phi$ coincides with $\phi_{M, N}$. This yields the isomorphisms in the remaining cases.
\end{proof}

An unbounded complex $P$ of finitely generated projective $R$-modules is totally acyclic if it is acyclic and its dual $P^*={\rm Hom}_R(P, R)$ is also acyclic. A finitely presented $R$-modules $G$ is called \emph{Gorenstein projective} \cite{EJ1} if there exists a totally acyclic complex $P$ such that its first cocycle $Z^1(P)$ is isomorphic to $G$. We denote by $R\mbox{-Gproj}$ the full subcategory formed by Gorenstein projective modules. We mention that the study of Gorenstein projective modules goes back to \cite{ABr}.

We observe that $R\mbox{-proj}\subseteq R\mbox{-Gproj}\subseteq {^\perp R}$. The full subcategory $R\mbox{-Gproj}$ of $R\mbox{-mod}$ is closed under extensions. Therefore, it  naturally becomes an exact category. Moreover, it is a Frobenius exact category such that its projective-injective objects are precisely projective $R$-modules. Therefore, by \cite[Theorem~I.2.8]{Hap}, its stable category $R\mbox{-\underline{Gproj}}$ is a triangulated category.

The \emph{Gorenstein projective dimension} \cite{EJ1, Hol} of a module $M$ is denoted by ${\rm Gpd}_R(M)$. Recall that ${\rm Gpd}_R(M)\leq n$ if and only if there is an exact sequence
$$0\rightarrow G_n\rightarrow \cdots \rightarrow G_1\rightarrow G_0\rightarrow M\rightarrow 0$$
 in $R\mbox{-mod}$ with each $G_i$ Gorenstein projective. We mention that a module $M$ is Gorenstein projective if and only if ${\rm Gpd}_R(M)=0$.

 \begin{defn}
 Let $R$ be a left coherent ring. We say that $R$ is \emph{left G-regular} if each module in $R\mbox{-mod}$ has finite Gorenstein projective dimension. The ring $R$ is \emph{right G-regular}, if $R^{\rm op}$ is left G-regular.
 \end{defn}

The following well-known result might be deduced from \cite[Corollary~2.21]{Hol}.

 \begin{lem}
 Assume that $R$ is left G-regular. Then we have $R\mbox{-{\rm Gproj}}={^\perp R}$.
 \end{lem}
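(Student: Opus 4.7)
The plan is as follows. The forward inclusion $R\mbox{-Gproj} \subseteq {^\perp R}$ is already implicit in the text and is immediate: if $G \in R\mbox{-Gproj}$, then $G \simeq Z^1(P)$ for a totally acyclic complex $P$ of finitely generated projectives, so $\sigma_{\geq 1}(P)$ is a projective resolution of $G$, and acyclicity of $P^\ast = {\rm Hom}_R(P, R)$ gives ${\rm Ext}^i_R(G, R) = 0$ for every $i \geq 1$.

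For the reverse inclusion ${^\perp R} \subseteq R\mbox{-Gproj}$, I would fix $M \in {^\perp R}$ and set $n := {\rm Gpd}_R(M)$, which is finite by left G-regularity. The goal becomes proving $n = 0$, and I would reach this by invoking Holm's criterion \cite[Corollary~2.21]{Hol}: for a module of finite Gorenstein projective dimension, ${\rm Gpd}_R(M) \leq k$ if and only if ${\rm Ext}^i_R(M, L) = 0$ for all $i > k$ and all $L$ of finite projective dimension. Applying this with $k = 0$ reduces the lemma to checking that ${\rm Ext}^i_R(M, L) = 0$ for every $i \geq 1$ and every such $L$.

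The hypothesis $M \in {^\perp R}$ supplies this vanishing only in the single case $L = R$. To upgrade it, I would use that $R$ is left coherent and $M$ is finitely presented, so $M$ admits a projective resolution by finitely generated projectives; consequently ${\rm Ext}^i_R(M, -)$ commutes with arbitrary direct sums. The vanishing then spreads from $R$ to any free module $R^{(X)}$, to any projective $Q$ (as a direct summand of a free one), and finally to any $L$ with ${\rm pd}_R(L) < \infty$ by induction on ${\rm pd}_R(L)$, using a short exact sequence $0 \to K \to Q \to L \to 0$ with $Q$ projective together with the long exact ${\rm Ext}$-sequence.

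The main obstacle I anticipate is pinpointing and correctly applying Holm's criterion; once that input is in place, the rest is routine multi-step bookkeeping. A self-contained fallback would be to take, via G-regularity, an exact sequence $0 \to G_n \to \cdots \to G_0 \to M \to 0$ with each $G_i$ Gorenstein projective, break it into short exact pieces, and dimension-shift against $R$ to show that each syzygy stays inside $R\mbox{-Gproj}$, eventually forcing $n = 0$.
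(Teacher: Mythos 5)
Your argument follows the same route as the paper, whose entire proof of this lemma is the citation of \cite[Corollary~2.21]{Hol}: your reduction of the reverse inclusion to ${\rm Ext}$-vanishing against all modules of finite projective dimension (spreading the vanishing from $R$ to free modules via commutation of ${\rm Ext}^i_R(M,-)$ with direct sums, then to projectives and by induction on projective dimension) is exactly the standard way to apply that criterion, and the forward inclusion is immediate as you say. The only caveat is that Holm's statement concerns Gorenstein projectives defined by totally acyclic complexes of arbitrary projective modules, whereas $R\mbox{-Gproj}$ here uses finitely generated ones, so strictly one should invoke the finite analogue of Holm's criterion over the left coherent ring $R$ (as the paper itself does for the subsequent lemma); this caveat is equally implicit in the paper's own one-line citation.
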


 Let $d\geq 0$. A two-sided noetherian ring $S$ is said to be \emph{$d$-Gorenstein} if the injective dimension of $S$ on each side is at most $d$. By \cite[Lemma~A]{Zaks}, the two injective dimensions coincide.  As indicated in the following result due to \cite[Theorem~1.4]{HH}, $d$-Gorenstein rings are both left and right $G$-regular.

 \begin{prop}\label{prop:Goren}
 Let $S$ be a two-sided noetherian ring and $d\geq 0$. Then the following conditions are equivalent.
 \begin{enumerate}
 \item The ring $R$ is $d$-Gorenstein.
 \item ${\rm Gpd}_R(M)\leq d$ for any $M\in R\mbox{-{\rm mod}}$.
 \item ${\rm Gpd}_{R^{\rm op}}(N)\leq d$ for any $N\in R^{\rm op}\mbox{-{\rm mod}}$.
 \end{enumerate}
 \end{prop}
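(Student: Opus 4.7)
The plan is to establish the implications (1) $\Rightarrow$ (2) $\Rightarrow$ (1) and (1) $\Rightarrow$ (3) $\Rightarrow$ (1); by symmetry between $R$ and $R^{\rm op}$, it suffices to sketch the first pair.

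For (1) $\Rightarrow$ (2), the goal is to show that the $d$-th syzygy of any $M\in R\mbox{-mod}$ is Gorenstein projective. Take a projective resolution of $M$ by finitely generated projectives (possible since $R$ is left noetherian) and set $G=\Omega^d(M)$. Dimension shift yields ${\rm Ext}^i_R(G,R) = {\rm Ext}^{i+d}_R(M,R) = 0$ for $i\geq 1$, because the injective dimension of $R$ as a left $R$-module is at most $d$; hence $G\in {^\perp R}$. To promote $G$ to a Gorenstein projective module, we construct a totally acyclic complex by splicing a projective resolution $P_\bullet \to G$ with the $R$-dual of a projective resolution $Q_\bullet \to G^*$ taken in $R^{\rm op}\mbox{-mod}$, where $G^*={\rm Hom}_R(G,R)$. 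Since ${\rm Ext}^i_R(G,R)=0$ for $i\geq 1$, dualizing $Q_\bullet$ yields an acyclic sequence $0\to G^{**}\to Q_0^*\to Q_1^*\to\cdots$ of finitely generated projective left $R$-modules; Auslander-Bridger biduality gives $G\simeq G^{**}$ because $G\in{^\perp R}$, so the splicing is well-defined. Applying $(-)^*$ to the resulting acyclic complex $P^\bullet$ recovers a projective resolution of $G^*$ extended by the $R^{\rm op}$-dual of $P_\bullet$; the exactness of this extension uses the symmetric hypothesis that the injective dimension of $R$ as a right $R$-module is also at most $d$. Hence $(P^\bullet)^*$ is acyclic, $G$ is Gorenstein projective, and the truncated resolution $0\to G\to P_{d-1}\to\cdots\to P_0\to M\to 0$ exhibits ${\rm Gpd}_R(M)\leq d$.

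For (2) $\Rightarrow$ (1), take any $M\in R\mbox{-mod}$ and a Gorenstein projective resolution $0\to G_d\to\cdots\to G_0\to M\to 0$. Since each $G_i\in{^\perp R}$, dimension shift gives ${\rm Ext}^{d+i}_R(M,R) = {\rm Ext}^i_R(G_d,R) = 0$ for $i\geq 1$. Because $R$ is left noetherian, injective dimension is detected on cyclic modules $R/I$, so the injective dimension of $R$ as a left $R$-module is at most $d$; by Zaks' lemma quoted above, the right injective dimension coincides with the left, so $R$ is $d$-Gorenstein.

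The main obstacle will be the careful construction of the totally acyclic complex in (1) $\Rightarrow$ (2), which uses the two-sided finite injective dimension of $R$ in an essential way to guarantee that both the complex and its $R$-dual are acyclic; in particular, the Auslander-Bridger biduality step and the dualization of the $R^{\rm op}$-resolution require simultaneous control on both sides. All remaining steps are routine dimension-shifting arguments.
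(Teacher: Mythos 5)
Your reduction to two implications is reasonable, but both of the arguments you sketch break precisely at the points that constitute the actual content of the theorem. In (1) $\Rightarrow$ (2), the splicing construction needs three facts about $G=\Omega^d(M)$: that ${\rm Ext}^i_R(G,R)=0$ for $i\geq 1$ (which you do get by dimension shift), that $G$ is reflexive, and that ${\rm Ext}^i_{R^{\rm op}}(G^*,R)=0$ for \emph{all} $i\geq 1$. Your justifications for the last two are incorrect. The exactness of $0\to G^{**}\to Q_0^*\to Q_1^*\to\cdots$ is governed by ${\rm Ext}^i_{R^{\rm op}}(G^*,R)$, not by ${\rm Ext}^i_R(G,R)$ as you claim, and the hypothesis that the injective dimension of $R_R$ is at most $d$ only kills these groups in degrees $>d$, not in degrees $1,\dots,d$. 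Likewise, reflexivity of $G$ does not follow from $G\in{^\perp R}$: the Auslander--Bridger biduality sequence has kernel and cokernel ${\rm Ext}^1_{R^{\rm op}}({\rm Tr}\,G,R)$ and ${\rm Ext}^2_{R^{\rm op}}({\rm Tr}\,G,R)$, which are not controlled by ${\rm Ext}^{\geq 1}_R(G,R)$; indeed over general rings there exist modules in ${^\perp R}$ that are not totally reflexive, so the implication you invoke is not formal and must use the two-sided Gorenstein hypothesis in an essential way. What you are implicitly assuming is exactly the statement that ${^\perp R}=R\mbox{-Gproj}$ over a Gorenstein ring (in the paper's references this is \cite[Lemma~4.2.2(iv)]{Buc}, or \cite[Theorem~3.2]{AM}), and that is the nontrivial core of this direction rather than a routine step.

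The gap in (2) $\Rightarrow$ (1) is of the same nature but more serious. Zaks' lemma \cite[Lemma~A]{Zaks} asserts that the left and right self-injective dimensions agree \emph{provided both are finite}; it does not let you transfer finiteness from one side to the other. From (2) your dimension shift plus Baer's criterion yields only ${\rm id}({_R R})\leq d$; concluding that ${\rm id}(R_R)$ is finite (hence $\leq d$) from purely left-module data is precisely the delicate point -- it is closely related to the open Gorenstein symmetry question -- and it is the reason the paper does not prove this proposition at all but cites \cite[Theorem~1.4]{HH}, where the transfer to the right side is achieved via Auslander--Bridger transpose and torsionfree-dimension arguments. Your sketch supplies no substitute for that step, so as written the proof of (2) $\Rightarrow$ (1) (and symmetrically (3) $\Rightarrow$ (1)) is incomplete.
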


The coherent analogue of $d$-Gorenstein rings is as follows. A ring $R$ is \emph{$d$-FC} \cite{Dam, DC} if it is two-sided coherent and ${\rm Ext}_R^{d+1}(X, R)=0={\rm Ext}_{R^{\rm op}}^{d+1}(Y, R)$ for any $X\in R\mbox{-mod}$ and $Y\in R^{\rm op}\mbox{-mod}$. Therefore, a $d$-FC ring is $d$-Gorenstein if and only if it is two-sided noetherian.

The following result is a coherent analogue of \cite[Theorem~3.2]{AM}, and  implies that a $d$-FC ring is both left G-regular and right G-regular.

\begin{lem}
Let $R$ be a two-sided coherent ring and $d\geq 0$. Then $R$ is $d$-FC if and only if ${\rm Gpd}_R(M)\leq d$ and ${\rm Gpd}_{R^{\rm op}}(N)\leq d$  for any $M\in R\mbox{-{\rm mod}}$ and any $N\in R^{\rm op}\mbox{-{\rm mod}}$.
\end{lem}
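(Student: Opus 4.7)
The plan is as follows. The ``if'' direction follows easily from Holm's theory. If ${\rm Gpd}_R(M) \leq d$ for every $M \in R\mbox{-mod}$, then by Holm's characterization the $d$-th syzygy $\Omega^d M$ in any projective resolution of $M$ is Gorenstein projective, so it is a cocycle of some totally acyclic complex of finitely generated projectives and therefore satisfies ${\rm Ext}^n_R(\Omega^d M, R) = 0$ for $n \geq 1$. Dimension shifting gives ${\rm Ext}^{d+1}_R(M, R) \cong {\rm Ext}^1_R(\Omega^d M, R) = 0$, and the symmetric argument on $R^{\rm op}\mbox{-mod}$ establishes the right-sided vanishing, so $R$ is $d$-FC.

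For the ``only if'' direction, assume $R$ is $d$-FC. By iterating the hypothesis along syzygies of finitely presented modules (each of which remains finitely presented by two-sided coherence), I would first strengthen the hypothesis to ${\rm Ext}^n_R(X, R) = 0$ for all $X \in R\mbox{-mod}$ and $n \geq d+1$, and analogously on the right. Fix $M \in R\mbox{-mod}$, choose a projective resolution by finitely generated projectives, and set $K := \Omega^d M$. Then ${\rm Ext}^n_R(K, R) = 0$ for all $n \geq 1$ by dimension shifting, so Holm's characterization reduces the goal ${\rm Gpd}_R(M) \leq d$ to showing that $K$ is Gorenstein projective.

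To exhibit $K$ as a cocycle of a totally acyclic complex of finitely generated projective left $R$-modules, I would splice the projective resolution $\cdots \to P_{d+1} \to P_d \to K \to 0$ with a ``dual'' complex on the right. By two-sided coherence, $K^* := {\rm Hom}_R(K, R)$ is finitely presented as a right $R$-module, so it admits a finitely generated projective resolution $\cdots \to Q_1 \to Q_0 \to K^* \to 0$ over $R^{\rm op}$; dualizing yields a complex $0 \to K^{**} \to Q_0^* \to Q_1^* \to \cdots$ of finitely generated projective left $R$-modules, which one splices to the resolution of $K$ via the canonical evaluation $K \to K^{**}$.

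The main obstacle is verifying that $K \to K^{**}$ is an isomorphism and that ${\rm Ext}^n_{R^{\rm op}}(K^*, R) = 0$ for all $n \geq 1$, which together ensure that the spliced complex is exact and that its dual is exact, i.e., that it is totally acyclic. The initial Ext-vanishing only handles $n \geq d+1$, and I would close the low-degree gap via the Auslander transpose $\mathrm{Tr}(K) \in R^{\rm op}\mbox{-mod}$: the classical four-term exact sequence
\[
0 \to {\rm Ext}^1_{R^{\rm op}}(\mathrm{Tr}(K), R) \to K \to K^{**} \to {\rm Ext}^2_{R^{\rm op}}(\mathrm{Tr}(K), R) \to 0
\]
together with the shifted identification ${\rm Ext}^{n+2}_{R^{\rm op}}(\mathrm{Tr}(K), R) \cong {\rm Ext}^n_{R^{\rm op}}(K^*, R)$ for $n \geq 1$ converts the problem into controlling ${\rm Ext}^{1,2}_{R^{\rm op}}(\mathrm{Tr}(K), R)$. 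Taking $K$ as a sufficiently high syzygy of $M$ and invoking the $d$-FC condition on the right should suffice, but executing this bookkeeping cleanly — and verifying that the construction really produces a complex whose dual is also acyclic — is the technical crux of the argument.
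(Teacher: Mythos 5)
Your overall strategy for the ``only if'' direction --- reduce to showing that $K=\Omega^d M$ is Gorenstein projective, splice a finitely generated projective resolution of $K$ with the dual of one of $K^*$, and control reflexivity and the Ext-vanishing of $K^*$ via the Auslander transpose --- is exactly the Auslander--Bridger mechanism behind \cite[Theorem~3.2]{AM}, which is all the paper itself invokes (it gives no independent argument). Your ``if'' direction is correct, as are the reductions: strengthening the $d$-FC vanishing to ${\rm Ext}^n_R(X,R)=0$ for all $n\geq d+1$, the identification ${\rm Ext}^{n+2}_{R^{\rm op}}({\rm Tr}(K),R)\cong{\rm Ext}^n_{R^{\rm op}}(K^*,R)$ for $n\geq 1$, and the four-term sequence relating $K\to K^{**}$ to ${\rm Ext}^{1,2}_{R^{\rm op}}({\rm Tr}(K),R)$.

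However, the step you defer is the actual content, and the hint you give for closing it does not work as stated: passing to a higher syzygy of $M$ only improves the vanishing of ${\rm Ext}^{*}_R(K,R)$ on the left, while the right-hand $d$-FC condition applied to the right module ${\rm Tr}(K)$ still yields ${\rm Ext}^i_{R^{\rm op}}({\rm Tr}(K),R)=0$ only for $i\geq d+1$; no choice of left syzygy index by itself produces the needed vanishing in degrees $1,\dots,d$. The missing idea is that ${\rm Tr}(K)$ is itself a $d$-th syzygy over $R^{\rm op}$. Indeed, if $\cdots\to P_2\to P_1\to P_0\to K\to 0$ is a resolution by finitely generated projectives, the dual complex gives, for each $j\geq 0$, an exact sequence
\[
0\to {\rm Ext}^1_R(\Omega^jK,R)\to {\rm Tr}(\Omega^jK)\to P_{j+2}^*\to{\rm Tr}(\Omega^{j+1}K)\to 0,
\]
and since ${\rm Ext}^1_R(\Omega^jK,R)\cong{\rm Ext}^{d+j+1}_R(M,R)=0$ these are short exact, so ${\rm Tr}(K)$ is, up to projective direct summands, a $d$-th syzygy of the finitely presented right module ${\rm Tr}(\Omega^dK)$. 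Hence ${\rm Ext}^i_{R^{\rm op}}({\rm Tr}(K),R)\cong{\rm Ext}^{i+d}_{R^{\rm op}}({\rm Tr}(\Omega^dK),R)=0$ for all $i\geq 1$ by the (strengthened) right-hand hypothesis; this simultaneously gives the reflexivity of $K$ (degrees $1,2$) and ${\rm Ext}^{\geq 1}_{R^{\rm op}}(K^*,R)=0$ (degrees $\geq 3$), after which your splicing construction is routinely checked to be totally acyclic with cocycle $K$. Without this transpose--syzygy shift, your argument has a genuine gap precisely at the point you yourself flag as the crux.
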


\begin{proof}
The same proof of  \cite[Theorem~3.2]{AM} works here. We mention that the ``if" part also follows from  a finite analogue of \cite[Theorem~2.20]{Hol}, and the ``only if" part might be viewed as  a coherent analogue of \cite[Lemm~4.2.2 (iv)]{Buc}.
\end{proof}

\begin{rem}
Let $R$ be a two-sided coherent ring and $d\geq 0$. In view of Proposition~\ref{prop:Goren}, we expect that the following result might be true: if ${\rm Gpd}_R(M)\leq d$ for any $M\in R\mbox{-mod}$, then $R$ is $d$-FC.
\end{rem}

We consider the following composite functor
 \begin{align}\label{equ:Phi}
 R\mbox{-Gproj} \hookrightarrow R\mbox{-mod} \stackrel{\rm can}\longrightarrow \mathbf{D}^b(R\mbox{-mod})\stackrel{q}\longrightarrow \mathbf{D}_{\rm sg}(R).
 \end{align}
Here, ``can" identifies a module with the corresponding stalk complex concentrated in degree zero, and $q$ denotes the quotient functor. Since the composite functor vanishes on $R\mbox{-proj}$, it induces a well-defined functor
$$\Phi_R\colon R\mbox{-\underline{Gproj}}\longrightarrow \mathbf{D}_{\rm sg}(R).$$

The following result is due to \cite[Theorem~4.4]{Buc} in a slightly generalized form; compare \cite[Theorem~2.1]{Ric} and \cite[Theorem~4.6]{Hap2}.

\begin{thm}[Buchweitz]\label{thm:Buc}
Let $R$ be a left coherent ring. Then $\Phi_R$ is a fully faithful triangle functor. Moreover, $\Phi_R$ is dense, if and only if it is dense  up to direct summands,  if and only if $R$ is left G-regular.
\end{thm}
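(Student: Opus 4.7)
The plan is to prove the theorem in three stages: first that $\Phi_R$ is a triangle functor, then that it is fully faithful (where Proposition~\ref{prop:les-3} does essentially all the work), and finally the three-way equivalence of density conditions with left G-regularity. For the triangle functor structure, the suspension on $R\mbox{-\underline{Gproj}}$ coming from the Frobenius exact structure is $\Omega^{-1}$, defined by choosing, for each $M\in R\mbox{-Gproj}$, a conflation $0\to M\to P_M\to \Omega^{-1}M\to 0$ with $P_M$ projective. This same short exact sequence gives a distinguished triangle $M\to P_M\to \Omega^{-1}M\to \Sigma M$ in $\mathbf{D}^b(R\mbox{-mod})$, and since $P_M$ becomes zero in $\mathbf{D}_{\rm sg}(R)$, the connecting morphism becomes a natural isomorphism $\Phi_R(\Omega^{-1}M)\simeq \Sigma\Phi_R(M)$. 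Every distinguished triangle in $R\mbox{-\underline{Gproj}}$ is (up to isomorphism) induced by a conflation of Gorenstein projectives, and any such conflation is a short exact sequence in $R\mbox{-mod}$, hence is sent by $\Phi_R$ to a distinguished triangle in $\mathbf{D}_{\rm sg}(R)$.

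Fully faithfulness then reduces to the degree-zero case on Gorenstein projective objects. For $M, N\in R\mbox{-Gproj}\subseteq {^\perp R}$, Proposition~\ref{prop:les-3} with $n=0$ provides a canonical isomorphism $\underline{\rm Hom}_R(M, N)\stackrel{\sim}\longrightarrow {\rm Hom}_{\mathbf{D}_{\rm sg}(R)}(M, N)$, and unwinding the definition of $\Phi_R$ via the composite (\ref{equ:Phi}) identifies it with the map induced by $\Phi_R$; bijectivity at all other degrees follows from Stage~1 together with the observation that each $\Sigma^n N$ in $R\mbox{-\underline{Gproj}}$ is again a Gorenstein projective module, so the degree-zero statement can be reapplied to the pair $(M,\Omega^{-n}N)$.

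For the density equivalences, $(\mathrm{i})\Rightarrow(\mathrm{ii})$ is trivial. For $(\mathrm{iii})\Rightarrow(\mathrm{i})$, I would induct on $d={\rm Gpd}_R(M)$. The case $d=0$ is immediate; for $d\geq 1$, a truncation of a length-$d$ Gorenstein projective resolution yields a short exact sequence $0\to K\to G\to M\to 0$ with $G\in R\mbox{-Gproj}$ and ${\rm Gpd}_R(K)=d-1$. By induction $K\simeq \Phi_R(H)$ in $\mathbf{D}_{\rm sg}(R)$ for some $H\in R\mbox{-Gproj}$; composing with $K\to G$ gives a morphism $\Phi_R(H)\to \Phi_R(G)$ in $\mathbf{D}_{\rm sg}(R)$ which, by full faithfulness, lifts to $H\to G$ in $R\mbox{-\underline{Gproj}}$, and its cone $C$ satisfies $\Phi_R(C)\simeq M$ by Stage~1. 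For $(\mathrm{ii})\Rightarrow(\mathrm{iii})$, I would first upgrade $(\mathrm{ii})$ to $(\mathrm{i})$: the category $R\mbox{-\underline{Gproj}}$ is idempotent-complete (the Frobenius category $R\mbox{-Gproj}$ is closed under direct summands in $R\mbox{-mod}$, and idempotents in such a stable category lift and split), so the essential image of the fully faithful $\Phi_R$ is closed under summands in $\mathbf{D}_{\rm sg}(R)$, forcing density-up-to-summands to be actual density. Given any $M\in R\mbox{-mod}$, pick $G\in R\mbox{-Gproj}$ with $\Phi_R(G)\simeq M$ in $\mathbf{D}_{\rm sg}(R)$; Lemma~\ref{lem:iso-in-sg} then produces an isomorphism $Z^{-n}(P_M)\simeq Z^{-n}(P_G)$ in $R\mbox{-\underline{mod}}$ for projective resolutions $P_M, P_G$ and $n$ large. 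Since syzygies of $G$ are Gorenstein projective, and the class of Gorenstein projectives is invariant under adding or removing projective summands, the $n$-th syzygy of $M$ is Gorenstein projective, yielding ${\rm Gpd}_R(M)<\infty$.

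The main obstacle will be Stage~1: carefully producing the natural isomorphism between the Frobenius suspension $\Omega^{-1}$ and the ambient triangulated suspension $\Sigma$ on $\mathbf{D}_{\rm sg}(R)$, and verifying that the two classes of distinguished triangles agree under this identification (including the compatibility of the connecting morphisms). A secondary delicate point is the idempotent-lifting step invoked to promote $(\mathrm{ii})$ to $(\mathrm{i})$; this is standard for stable categories of Frobenius exact categories whose underlying additive category is idempotent-complete, but merits an explicit check.
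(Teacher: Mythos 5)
Your skeleton agrees with the paper on full faithfulness (Proposition~\ref{prop:les-3} with $n=0$ is exactly the paper's step; the remarks about other degrees are superfluous, since the Hom-sets of $R\mbox{-\underline{Gproj}}$ involve no shifts), but the density part contains a genuine gap. Your passage from ``dense up to direct summands'' to ``dense'' rests on the claim that $R\mbox{-\underline{Gproj}}$ is idempotent complete for an arbitrary left coherent ring, justified only by the assertion that idempotents in the stable category of an idempotent-complete Frobenius category ``lift and split''. That general principle is false: stable categories of Frobenius categories, like Verdier quotients, need not inherit idempotent completeness, and singularity categories are the standard illustration --- by the very theorem under discussion, for a Gorenstein ring $R$ one has $R\mbox{-\underline{Gproj}}\simeq \mathbf{D}_{\rm sg}(R)$, which in general fails to be idempotent complete (this is precisely why idempotent completions, or henselian/complete local hypotheses, appear throughout that literature). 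So this step either fails or needs a serious argument you have not supplied. It is also unnecessary: arguing as the paper does, if a projective resolution $Q$ of $M$ satisfies $Q\oplus Q''\simeq Q'$ in $\mathbf{D}_{\rm sg}(R)$ with $Q'$ a resolution of a Gorenstein projective module, then Lemma~\ref{lem:iso-in-sg} shows that high syzygies of $M$ are, up to projective summands, direct summands of the Gorenstein projective modules $Z^{-n}(Q')$ inside $R\mbox{-mod}$; closure of Gorenstein projectives under honest direct summands then gives ${\rm Gpd}_R(M)<\infty$ directly from (ii), with no idempotent splitting in any stable or triangulated category.

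Two further points. First, your induction on ${\rm Gpd}_R(M)$ for (iii)$\Rightarrow$(i) only places modules in the essential image of $\Phi_R$; density concerns all objects of $\mathbf{D}_{\rm sg}(R)$, so you must add that every object is a shift of a module via (\ref{iso:proj}) and that the essential image of the fully faithful triangle functor $\Phi_R$ is a triangulated subcategory, hence closed under shifts. Once you say this, the induction becomes redundant: the paper's one-line argument is that for $R$ left G-regular and $n\gg 0$ one has $P\simeq \Sigma^n\Phi_R(Z^{-n}(P))$ with $Z^{-n}(P)$ Gorenstein projective. Second, your Stage 1 (comparing the Frobenius suspension $\Omega^{-1}$ with $\Sigma$ and matching the triangles), which you flag as the main obstacle, is exactly the verification packaged by the paper's citation: the composite (\ref{equ:Phi}) is a $\partial$-functor vanishing on projectives, so $\Phi_R$ is a triangle functor by \cite[Lemma~2.5]{Chen11}; your hands-on construction can be completed, but you should either carry out those compatibility checks or invoke such a reference.
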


\begin{proof}
We observe that the composite functor (\ref{equ:Phi}) is a $\partial$-functor, which vanishes on projective modules. By \cite[Lemma~2.5]{Chen11}, the induced functor $\Phi_R$ is a triangle functor. Its fully-faithfulness follows from the case $n=0$ of Proposition~\ref{prop:les-3}.  In particular, the essential image of  $\Phi_R$ is a triangulated subcategory of $\mathbf{D}_{\rm sg}(R)$.

For the second statement, it suffices to prove the following two claims: (1) if $R$ is left G-regular, then $\Phi_R$ is dense; (2) if $\Phi_R$ is dense up to direct summands, then $R$ is left G-regular.

 To prove (1), we assume that $R$ is left G-regular. For an object $P\in \mathbf{K}^{-, b}(R\mbox{-proj})$, we take $n$ sufficiently large and apply the isomorphism (\ref{iso:proj}). Since $G=Z^{-n}(P)$ is  Gorenstein projective, it follows that $P$ is isomorphic to $\Sigma^n\Phi_R(G)$. Since the essential image of  $\Phi_R$ is a triangulated subcategory of $\mathbf{D}_{\rm sg}(R)$, we infer that $P$ lies in the essential image and that $\Phi_R$ is dense.

To prove (2), we take an arbitrary finitely presented $R$-module $M$ and take its projective resolution $Q$. Since $M$ lies in the essential image of $\Phi_R$ up to direct summands, it follows that $Q$ is isomorphic to a direct summand of the projective resolution $Q'$ of a Gorenstein projective module in $\mathbf{K}^{-, b}(R\mbox{-proj})/\mathbf{K}^b(R\mbox{-proj})$. We assume an isomorphism $Q\oplus Q''\simeq Q'$. By Lemma~\ref{lem:iso-in-sg}, we infer that higher syzygies of $M$ are isomorphic to direct summands of  $Z^{-n}(Q')$. The latter modules are Gorenstein projective. Recall that Gorenstein projective modules are closed under direct summands and isomorphisms in $R\mbox{-\underline{mod}}$.  It follows that  higher syzygies of $M$ are Gorenstein projective. Therefore, $M$ has finite Gorenstein projective dimension. Consequently,  the ring $R$  is left G-regular.
\end{proof}

\begin{rem}
    The original result \cite[Theorem~4.4]{Buc} only deals with Gorenstein rings, whose proof relies on complete resolutions  and  is completely different from the above one. The treatment here is similar to the one in \cite{Orl, Chen11} and relies on Proposition~\ref{prop:les-3}. For another proof, we refer to \cite[Section~2]{IY2}.  The ``only if" part of the second statement above is implicit in \cite[Theorem~6.9(8)]{Bel}.
\end{rem}

\begin{rem}
(1) Assume that $R$ is a commutative local Gorenstein ring. Then a module is Gorenstein projective if and only if it is maximal Cohen-Macaulay; see \cite[Proposition~4.12]{ABr}. In  view of this fact, Gorenstein projective modules over a noncomutative Gorenstein ring are also called maximal Cohen-Macaulay in \cite{Buc}.

(2) Assume that $R$ is a hypersurface singularity, that is, there is a regular local ring $S$ and a nonzero element $f$ in the maximal ideal of $S$ such that $R\simeq S/(f)$. Then the stable category of maximal Cohen-Macaulay $R$-modules is triangle equivalent to the homotopy category $\mathbf{HMF}(S; f)$ of \emph{matrix factorizations} of $f$;  see \cite[Theorem~6.1]{Eis}. We mention that matrix factorizations appear in the study of D-branes \cite{Orl}.

(3) Following \cite{BJO},  the \emph{Gorenstein defect category} of $R$ is defined to be the Verdier quotient category
$$\mathbf{D}_{\rm def}(R)=\mathbf{D}_{\rm sg}(R)/{{\rm Im}\; \Phi_R}.$$
The second statement above is reformulated as follows: $\mathbf{D}_{\rm def}(R)=0$ if and only if $R$ is left G-regular; compare \cite[Theorem~4.2]{BJO}.
\end{rem}

\subsection{The singular presilting conjecture}

Throughout this subsection, we assume that $\Lambda$ is an artin algebra \cite{ARS}, that is, $\Lambda$ is a finitely generated module over its center $Z(\Lambda)$ which is a commutative artinian ring.

Recall that the finitistic dimension of $\Lambda$ is defined as
$${\rm fin.dim}(\Lambda)={\rm sup}\{{\rm pd}_\Lambda (M)\; |\; M \in \Lambda\mbox{-mod} \mbox{ with } {\rm pd}_\Lambda(M)<\infty\}.$$
The following conjecture  is very well known \cite{Bass}.

\vskip 3pt

\emph{Finitistic Dimension Conjecture}. Let $\Lambda$ be an artin algebra. Then  ${\rm fin.dim}(\Lambda)<\infty$.

\vskip 3pt
The following two conjectures are proposed in \cite{AR75}.

\vskip 3pt

 \emph{Generalized Nakayama Conjecture.} Let $S$ be a simple module over an artin algebra $\Lambda$. Then ${\rm Ext}_\Lambda^n(S, \Lambda)\neq 0$ for some $n\geq 0$.

\vskip 3pt

 \emph{Auslander-Reiten Conjecture. } For a non-projective module $M$ over any artin algebra $\Lambda$, we have ${\rm Ext}^n_\Lambda(M, M\oplus \Lambda)\neq 0$ for some $n\geq 1$.

\vskip 3pt
We refer to \cite[ Conjectures, p.409-410]{ARS} for more conjectures in the representation theory of artin algebras.

By \cite[(8.6)~Theorem]{Tach}, the Generalized Nakayama Conjecture holds for group algebras of finite $p$-groups. By \cite[Theorem~3.4.3]{Yam}, if the Finististic Dimension Conjecture holds for $\Lambda^{\rm op}$, then the Generalized Nakayama Conjecture holds for $\Lambda$. Moreover, we have the following result due to \cite[Theorem~1.1]{AR75}.

 \begin{prop}
 The Auslander-Reiten Conjecture holds for all artin algebras if and only if the Generalized Nakayama Conjecture holds for all artin algebras.
 \end{prop}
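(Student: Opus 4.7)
The plan is to reduce the equivalence to a per-algebra bridge via endomorphism algebras. First I would reformulate ARC. For an artin algebra $\Lambda$, ARC holds for $\Lambda$ if and only if every generator $N$ of $\Lambda\mbox{-mod}$ (that is, with $\Lambda\in{\rm add}(N)$) satisfying ${\rm Ext}^i_\Lambda(N,N)=0$ for all $i\geq 1$ must lie in ${\rm add}(\Lambda)$. This is routine: writing $N=\Lambda^{\oplus a}\oplus M$, the projectivity of $\Lambda$ kills ${\rm Ext}^i(\Lambda,-)$ and ${\rm Ext}^i(-,\Lambda)$ for $i\geq 1$, so the Ext-rigidity of $N$ is equivalent to ${\rm Ext}^i_\Lambda(M, M\oplus\Lambda)=0$ for $i\geq 1$, and $N\in{\rm add}(\Lambda)$ if and only if $M$ is projective. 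Hence the proposition is reduced to: (GNC for all artin algebras) is equivalent to (no artin algebra admits a non-projective Ext-rigid generator).

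The bridge is the endomorphism algebra construction. Given a generator $N$ of $\Lambda\mbox{-mod}$, set $\Gamma={\rm End}_\Lambda(N)^{\rm op}$, again an artin algebra. The functor $F={\rm Hom}_\Lambda(N,-)$ restricts to an equivalence ${\rm add}_\Lambda(N)\simeq {\rm proj}(\Gamma)$ sending $N$ to $\Gamma$, and induces a bijection between indecomposable summands $X$ of $N$ and simple $\Gamma$-modules $T_X$. The key computation to carry out is that when $N$ is Ext-rigid, a minimal ${\rm add}(N)$-resolution $\cdots\to Y_1\to Y_0\to X\to 0$ of $X$ (obtained by iterated minimal right ${\rm add}(N)$-approximations of successive radicals) lifts under $F$ to the minimal projective resolution of $T_X$ in $\Gamma\mbox{-mod}$, giving an identification
$${\rm Ext}^n_\Gamma(T_X,\Gamma)\simeq H^n\bigl({\rm Hom}_\Lambda(Y_\bullet,N)\bigr), \quad n\geq 0,$$
which for summands $X\notin{\rm add}(\Lambda)$ vanishes in every degree, precisely because of the Ext-rigidity of $N$.

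Both directions of the proposition then follow. For GNC $\Rightarrow$ ARC: a non-projective Ext-rigid generator $N$ over some $\Lambda$ produces an indecomposable summand $X\notin{\rm add}(\Lambda)$, and the corresponding simple $T_X$ over $\Gamma={\rm End}_\Lambda(N)^{\rm op}$ has vanishing ${\rm Ext}^n_\Gamma(T_X,\Gamma)$ for all $n\geq 0$, contradicting GNC for $\Gamma$. For ARC $\Rightarrow$ GNC, I would argue contrapositively: given an artin algebra $\Gamma$ and a simple $T_X$ violating GNC, let $e\in\Gamma$ be the primitive idempotent corresponding to $T_X$, set $\Lambda=(1-e)\Gamma(1-e)$ and view $\Gamma(1-e)$ together with a suitable $\Gamma e$-piece as a $\Lambda$-module $N$. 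The bridge, run backwards, shows that $N$ is an Ext-rigid generator over $\Lambda$ whose summand corresponding to $T_X$ does not lie in ${\rm add}(\Lambda)$, contradicting ARC for $\Lambda$.

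The main obstacle is the displayed identification of Ext groups, which is the technical backbone of the argument: verifying that ${\rm add}(N)$-approximations produce a resolution that remains exact after $F$ is applied uses Ext-rigidity in an essential way, and the converse direction requires additionally that the Morita-type reconstruction $(\Gamma,T_X)\rightsquigarrow(\Lambda,N)$ honestly recovers a generator with the correct indecomposable summand structure. Once the bridge is in place, both implications are formal.
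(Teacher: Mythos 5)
The paper itself gives no proof of this proposition; it is quoted from Auslander--Reiten's 1975 paper (Theorem 1.1 there), and your endomorphism-algebra strategy is indeed the architecture of that original argument, with a correct first reduction: ARC for $\Lambda$ is equivalent to the statement that every generator $N$ with $\mathrm{Ext}^i_\Lambda(N,N)=0$ for all $i\geq 1$ lies in $\mathrm{add}(\Lambda)$. However, the technical core of your bridge is both misdescribed and unproven. The minimal projective resolution of $T_X$ over $\Gamma$ is not obtained by applying $F$ to iterated minimal right $\mathrm{add}(N)$-approximations of successive radicals of $X$: the first syzygy of $T_X$ is $\mathrm{rad}_\Gamma\,\mathrm{Hom}_\Lambda(N,X)$, which is the radical of the category $\mathrm{add}(N)$ evaluated at $(N,X)$, and this is strictly larger than $\mathrm{Hom}_\Lambda(N,\mathrm{rad}\,X)$ in general; for instance the projective cover $p\colon P\twoheadrightarrow X$ (note $P\in\mathrm{add}(N)$ because $N$ is a generator, and $p$ is a radical morphism because $X$ is non-projective and indecomposable) is surjective and so does not factor through $\mathrm{rad}\,X$. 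The correct complex $Y_\bullet$ is built from minimal right almost split (sink) maps in $\mathrm{add}(N)$, and it is in general not an exact complex of $\Lambda$-modules, so the mechanism you indicate, namely that ``the resolution remains exact after $F$ is applied,'' is not the statement one needs. With the correct $Y_\bullet$ your displayed identification $\mathrm{Ext}^n_\Gamma(T_X,\Gamma)\simeq H^n(\mathrm{Hom}_\Lambda(Y_\bullet,N))$ is valid, but the decisive claim that this cohomology vanishes in all degrees when $N$ is Ext-rigid and $X$ is a non-projective summand is asserted rather than derived, and it is precisely the content of the cited theorem; only the degree-zero vanishing is cheap (it follows from non-projectivity of $X$ alone, via the surjective radical morphism $p$). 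As it stands, the proposal assumes at this point what it must prove.

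The converse direction has the same status. Taking $e$ the primitive idempotent of the offending simple $S$, $\Lambda=(1-e)\Gamma(1-e)$ and (up to consistent choices of sides) $N=(1-e)\Gamma$, which is automatically a generator since it contains $(1-e)\Gamma(1-e)$ as a direct summand, is the right construction; but ``running the bridge backwards'' is not free. You must deduce directly from $\mathrm{Ext}^i_\Gamma(S,\Gamma)=0$ for all $i\geq 0$ that $N$ is Ext-rigid and non-projective over $\Lambda$, or else justify that $\Gamma$ is recovered as $\mathrm{End}_\Lambda(N)$, which is not automatic for a corner module. So the plan is the classical one and can be completed, but the two verifications you yourself single out as the backbone are missing, and one of them is set up incorrectly.
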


Let $\mathcal{T}$ be a triangulated category. Recall from \cite[Definition~2.1]{AI} that an object $M$ is called \emph{presilting} if ${\rm Hom}_\mathcal{T}(M, \Sigma^n(M))=0$ for any $n>0$. If in addition the smallest thick triangulated subcategory containing $M$ is $\mathcal{T}$ itself, then $M$ is called a \emph{silting object}. We mention that the study of silting objects goes back to \cite{KV}.

The following conjecture is  proposed in \cite[Section~1]{CHQW}, which is inspired by \cite[Section~6]{IY}.

\vskip 3pt

 \emph{Singular Presilting Conjecture.} For any artin algebra $\Lambda$, there is no nonzero presilting object in $\mathbf{D}_{\rm sg}(\Lambda)$.

\vskip 3pt

The following result justifies the conjecture to some extent, and  is mentioned in \cite[Section~1]{CHQW} without a detailed proof.

\begin{prop}\label{prop:SPC-ARC}
Let $\Lambda$ be an artin algebra. If the Singular Presilting Conjecture holds for $\Lambda$, then so does the Auslander-Reiten Conjecture. The converse holds if $\Lambda$ is Gorenstein.
\end{prop}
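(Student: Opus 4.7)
The plan is to use Proposition~\ref{prop:les-3} as a dictionary between ${\rm Ext}$-vanishing over $\Lambda$ and ${\rm Hom}$-vanishing in $\mathbf{D}_{\rm sg}(\Lambda)$: whenever $M \in {}^\perp \Lambda$, one has ${\rm Hom}_{\mathbf{D}_{\rm sg}(\Lambda)}(M, \Sigma^n(M)) \simeq {\rm Ext}^n_\Lambda(M, M)$ for every $n \geq 1$, so the presilting condition on $M$ in the singularity category is equivalent to the self-orthogonality condition in $\Lambda\mbox{-mod}$.

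For the first implication, I would suppose the Auslander-Reiten Conjecture fails for $\Lambda$ and pick a non-projective $M$ in $\Lambda\mbox{-mod}$ with ${\rm Ext}^n_\Lambda(M, M \oplus \Lambda) = 0$ for all $n \geq 1$. In particular $M \in {}^\perp \Lambda$, so the dictionary above makes $M$ presilting when viewed in $\mathbf{D}_{\rm sg}(\Lambda)$. It remains to verify that $M$ is nonzero in $\mathbf{D}_{\rm sg}(\Lambda)$, i.e.\ has infinite projective dimension. This is the classical observation that any module in ${}^\perp \Lambda$ of finite projective dimension must already be projective: if $\mathrm{pd}_\Lambda(M) = d \geq 1$ with a minimal projective resolution, the syzygy $\Omega^{d-1}(M)$ has projective dimension exactly $1$ and sits in a non-split short exact sequence $0 \to Q \to P \to \Omega^{d-1}(M) \to 0$ with $Q, P$ projective; yet dimension shift yields ${\rm Ext}^1_\Lambda(\Omega^{d-1}(M), Q) \simeq {\rm Ext}^d_\Lambda(M, Q) = 0$ since $M \in {}^\perp \Lambda$ and $Q$ is a summand of a free module, forcing the sequence to split, a contradiction. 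Hence $M$ produces a nonzero presilting object in $\mathbf{D}_{\rm sg}(\Lambda)$, contradicting the Singular Presilting Conjecture.

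For the converse, assume $\Lambda$ is Gorenstein. By Proposition~\ref{prop:Goren}, $\Lambda$ is left $G$-regular, so Buchweitz's theorem (Theorem~\ref{thm:Buc}) provides a triangle equivalence $\Phi_\Lambda \colon \Lambda\mbox{-\underline{Gproj}} \xrightarrow{\sim} \mathbf{D}_{\rm sg}(\Lambda)$. A hypothetical nonzero presilting object of $\mathbf{D}_{\rm sg}(\Lambda)$ is then of the form $\Phi_\Lambda(M)$ for a Gorenstein projective module $M$ which, after removing projective direct summands, is non-projective. Since $\Lambda\mbox{-Gproj} \subseteq {}^\perp \Lambda$, Proposition~\ref{prop:les-3} again converts the presilting condition into ${\rm Ext}^n_\Lambda(M, M) = 0$ for all $n \geq 1$; combined with ${\rm Ext}^n_\Lambda(M, \Lambda) = 0$ coming from Gorenstein projectivity, this makes $M$ a counterexample to the Auslander-Reiten Conjecture. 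The only step needing real argument beyond invoking the earlier results is the classical syzygy lemma used in the first implication, which constitutes the main technical obstacle.
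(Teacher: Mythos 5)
Your proposal is correct and follows essentially the same route as the paper: both directions hinge on Proposition~\ref{prop:les-3} to translate the presilting condition into ${\rm Ext}$-vanishing for modules in ${}^\perp\Lambda$, and the converse uses Buchweitz's theorem (Theorem~\ref{thm:Buc}) to replace a presilting object by a Gorenstein projective module, exactly as in the paper. The only difference is cosmetic: you argue the first implication contrapositively and spell out, via minimal resolutions, the standard fact that a module of finite projective dimension $d\geq 1$ has ${\rm Ext}^d_\Lambda(M,\Lambda)\neq 0$, which the paper simply quotes.
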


\begin{proof}
We assume that the Singular Presilting Conjecture holds for $\Lambda$. Assume that $M$ is a $\Lambda$-module satisfying ${\rm Ext}^n_\Lambda(M, M\oplus \Lambda)=0$ for $n\geq 1$. Then $M$ belongs to ${^\perp \Lambda}$. By Proposition~\ref{prop:les-3}, we infer that ${\rm Hom}_{\mathbf{D}_{\rm sg}(\Lambda)}(M, \Sigma^n(M))=0$ for $n\geq 1$. Therefore, $M$ is a presilting object in $\mathbf{D}_{\rm sg}(\Lambda)$. By the assumption, we infer that $M$ is zero in $\mathbf{D}_{\rm sg}(\Lambda)$. In other words, $M$ has finite projective dimension. The following fact is standard: if ${\rm pd}_\Lambda (M)=d\geq 1$, we have ${\rm Ext}_\Lambda^d(M, \Lambda)\neq 0$. It follows that $M$ is projective. This implies that the Auslander-Reiten Conjecture holds for $\Lambda$.

For the converse, we assume that $\Lambda$ is Gorenstein and that the Auslander-Reiten Conjecture holds for $\Lambda$. Take a presilting object $X$ in $\mathbf{D}_{\rm sg}(\Lambda)$. By Theorem~\ref{thm:Buc}, we may assume that $X$ is given by a Gorenstein projective $\Lambda$-module, still denoted by $X$. By the case $n>0$ of Proposition~\ref{prop:les-3}, we infer that ${\rm Ext}_\Lambda^n(X, X)=0$ for $n\geq 1$. Since $X$ is Gorenstein projective, we have $X\in {^{\perp} \Lambda}$, that is, ${\rm Ext}_\Lambda^n(X, \Lambda)=0$ for $n\geq 1$. By the Ausalnder-Reiten Conjecture for $\Lambda$, we infer that $X$ is projective. Therefore, it is zero in $\mathbf{D}_{\rm sg}(\Lambda)$, as required.
\end{proof}

The following non-existence result generalizes \cite[Theorem~1]{AHMW} and partially supports the Singular Presilting Conjecture.

\begin{prop}{\rm (\cite[Corollary~3.3]{CLZZ})}
For any artin algebra $\Lambda$, there is no nonzero silting object in $\mathbf{D}_{\rm sg}(\Lambda)$.
\end{prop}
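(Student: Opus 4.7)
My plan is to assume a nonzero silting object $M$ exists in $\mathbf{D}_{\rm sg}(\Lambda)$, pass to the dg enhancement so as to apply Keller's silting theorem, and then derive a contradiction from the bounded co-$t$-structure this induces together with the infinite syzygy tower of a module of infinite projective dimension.

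Since $\Lambda$ is artin, $\mathbf{D}_{\rm sg}(\Lambda)$ is Hom-finite over $Z(\Lambda)$, Krull--Schmidt and idempotent-complete, and by Proposition~\ref{prop:sing-V} it is algebraic with canonical dg enhancement $\mathbf{S}_{\rm dg}(\Lambda)$. A silting object $M$ in an algebraic triangulated category yields, via Keller's theorem, a triangle equivalence
$$\Phi\colon \mathbf{D}_{\rm sg}(\Lambda) \xrightarrow{\sim} \mathrm{per}(B), \qquad B := \mathbf{S}_{\rm dg}(\Lambda)(M,M),$$
where the presilting condition forces $H^n(B)=0$ for $n>0$ and Hom-finiteness forces each $H^n(B)$ to be finite-dimensional over $Z(\Lambda)$. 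By the Aihara--Iyama theory, $M$ simultaneously defines a bounded co-$t$-structure on $\mathbf{D}_{\rm sg}(\Lambda)$ with coheart $\mathrm{add}(M)$; in particular every object $X$ has finite $M$-amplitude, so $\mathrm{Hom}_{\mathbf{D}_{\rm sg}(\Lambda)}(M,\Sigma^n X)=0$ for $n$ sufficiently large (depending on $X$).

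To contradict this, one exploits the syzygy structure of $\mathbf{D}_{\rm sg}(\Lambda)$. If $\Lambda$ has finite global dimension then the singularity category vanishes and $M=0$ for free; otherwise there is a simple $\Lambda$-module $S$ with $\mathrm{pd}_\Lambda S=\infty$, and its syzygies $\Omega^n S$ realize $\Sigma^{-n}S$ as nonzero objects of $\mathbf{D}_{\rm sg}(\Lambda)$ for every $n\geq 0$. Combining this infinite $\Sigma^{-1}$-orbit with the Hom-finiteness of $\mathrm{End}_{\mathbf{D}_{\rm sg}(\Lambda)}(M)$ through a pigeonhole argument should yield an object $X$ for which $\mathrm{Hom}_{\mathbf{D}_{\rm sg}(\Lambda)}(M,\Sigma^n X)\neq 0$ for arbitrarily large $n$, contradicting the bounded $M$-amplitude and forcing $M=0$.

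The main obstacle is precisely this last step: manufacturing an object of unbounded amplitude purely from the infinite syzygy sequence, without invoking Gorensteinness of $\Lambda$ (which would allow Theorem~\ref{thm:Buc} to represent everything by a Gorenstein projective module and make the contradiction transparent via Proposition~\ref{prop:SPC-ARC}). The intrinsic ingredient that closes this gap, carried out in \cite{CLZZ}, should be a Krull--Schmidt/Auslander--Reiten-style argument that converts the infinite family $\{\Omega^n S\}_{n\geq 0}$ of (eventually periodically recurring, but nonzero) syzygies into a genuine unbounded Hom-pattern, avoiding any reliance on a Gorenstein projective resolution of $S$.
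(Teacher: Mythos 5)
There is a genuine gap: what you have written is a reduction plus an acknowledgement that the decisive step is missing. After the (correct and elementary) observation that a silting object $M$ forces ${\rm Hom}_{\mathbf{D}_{\rm sg}(\Lambda)}(M,\Sigma^nX)=0$ for $n\gg 0$ for each fixed $X\in{\rm thick}(M)=\mathbf{D}_{\rm sg}(\Lambda)$, the entire content of the statement is the complementary \emph{non-vanishing} assertion: from a module of infinite projective dimension one must manufacture a test object $X$ (in \cite{CLZZ} essentially $\Lambda/{\rm rad}\,\Lambda$) with ${\rm Hom}_{\mathbf{D}_{\rm sg}(\Lambda)}(M,\Sigma^nX)\neq 0$ in arbitrarily large degrees for \emph{every} nonzero $M$. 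You explicitly defer this step to \cite{CLZZ}, so nothing is proved; note also that any successful argument must exploit the generation half of ``silting'' in an essential way, since the presilting case is precisely the open Singular Presilting Conjecture. (The paper itself offers no proof to compare with: it quotes \cite[Corollary~3.3]{CLZZ}, whose main theorem --- the non-vanishing result of its title --- is exactly the ingredient you are missing.)

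Moreover, several of your preparatory claims are false or unjustified for a general artin algebra, so the frame you build around the gap would not survive even if the gap were filled. For a non-Gorenstein $\Lambda$ the category $\mathbf{D}_{\rm sg}(\Lambda)$ need not be Hom-finite over $Z(\Lambda)$ (the stable/Tate--Vogel cohomology groups appearing via the Tor-terms in Corollary~\ref{cor:les-2} are cohomologies of an unbounded tensor product and are typically infinite-dimensional), hence need not be Krull--Schmidt, and idempotent completeness is not automatic either; consequently the passage to ${\rm per}(B)$ via Keller's recognition theorem, the finiteness of $H^n(B)$, and the proposed ``pigeonhole'' argument on the syzygy orbit $\{\Omega^nS\}$ (whose members need not recur) all lack footing. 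The Keller/co-$t$-structure detour is in any case unnecessary: the vanishing ${\rm Hom}(M,\Sigma^nX)=0$ for $n\gg 0$ follows directly from the presilting condition together with the fact that $X$ lies in the thick subcategory generated by $M$, and the real work is entirely concentrated in the non-vanishing theorem of \cite{CLZZ}, which your proposal does not supply.
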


Recall that a $\Lambda$-module $M$ is ultimately-closed \cite[Section~3]{Jans} if there exist $d\geq 1$ such that $\Omega^d(M)$ belongs to ${\rm add}(\Lambda\oplus M\oplus \Omega(M)\oplus \cdots \oplus \Omega^{d-1}(M))$. Here, $\Omega(M)$ denotes the first syzygy of $M$,  and ${\rm add}(X)$ means the full subcategory formed by direct summands of finite direct sums of a module $X$.  An artin algebra $\Lambda$ is called \emph{ultimately-closed} if each $\Lambda$-module is ultimately-closed. For example, any syzygy-finite algebra is ultimately-closed.

By \cite[Proposition~1.3]{AR75}, the Auslander-Reiten  Conjecture holds for any ultimately-closed algebra. In view of Proposition~\ref{prop:SPC-ARC}, the following result is expected.

\begin{prop}{\rm (\cite[Proposition~3.5]{CLZZ})}
The Singular Presilting Conjecture holds for any ultimately-closed algebra.
\end{prop}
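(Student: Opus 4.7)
The plan is to reduce the statement to a direct calculation inside $\mathbf{D}_{\rm sg}(\Lambda)$, exploiting that the inverse shift $\Sigma^{-1}$ implements the syzygy operation $\Omega$ on modules. First, I would use the isomorphism (\ref{iso:proj}) to see that every object of $\mathbf{D}_{\rm sg}(\Lambda)$ is isomorphic, after a suitable shift, to a finitely generated $\Lambda$-module. Since the presilting property ${\rm Hom}_{\mathbf{D}_{\rm sg}(\Lambda)}(X,\Sigma^m X)=0$ for $m>0$ is invariant under the shift functor, one may assume from the outset that a given presilting object has the form $X=M$ for some $M\in \Lambda\mbox{-mod}$.

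Next, I would unfold the ultimately-closed hypothesis. By definition, there exist $d\geq 1$, non-negative integers $n_{-1},n_0,\ldots,n_{d-1}$, and a split monomorphism
\[
\Omega^d(M)\hookrightarrow \Lambda^{n_{-1}}\oplus M^{n_0}\oplus \Omega(M)^{n_1}\oplus \cdots \oplus \Omega^{d-1}(M)^{n_{d-1}}
\]
in $\Lambda\mbox{-mod}$. I would then push this split inclusion into $\mathbf{D}_{\rm sg}(\Lambda)$ via the canonical functor of (\ref{equ:Phi}), using that the functor is additive (and so preserves split monomorphisms and finite direct sums), that $\Lambda$ is zero in $\mathbf{D}_{\rm sg}(\Lambda)$, and that each short exact sequence $0\to \Omega^i(M)\to P^{i-1}\to \Omega^{i-1}(M)\to 0$ with $P^{i-1}$ projective yields an isomorphism $\Omega^i(M)\simeq \Sigma^{-i}(M)$ in $\mathbf{D}_{\rm sg}(\Lambda)$. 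The conclusion is that $\Sigma^{-d}(M)$ is a direct summand of $\bigoplus_{i=0}^{d-1}\Sigma^{-i}(M)^{n_i}$ in $\mathbf{D}_{\rm sg}(\Lambda)$. Applying $\Sigma^d$ and relabelling $k=d-i$, one obtains a split inclusion
\[
M\hookrightarrow \bigoplus_{k=1}^{d}\Sigma^{k}(M)^{m_k}
\]
in $\mathbf{D}_{\rm sg}(\Lambda)$, where now every shift $\Sigma^k$ has strictly positive exponent.

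To finish, I would simply note that such a split inclusion is encoded by a tuple of morphisms $M\to \Sigma^k(M)$ with $k\geq 1$, each of which vanishes by the presilting assumption on $M$. Hence the split inclusion is itself the zero morphism, so $M$ is a direct summand of the zero object and $M\simeq 0$ in $\mathbf{D}_{\rm sg}(\Lambda)$; this is exactly the Singular Presilting Conjecture for $\Lambda$. The main technical point to get right is the translation of the ${\rm add}$-containment in $\Lambda\mbox{-mod}$ into a split inclusion inside the triangulated category $\mathbf{D}_{\rm sg}(\Lambda)$, together with the identification $\Omega^i(M)\simeq \Sigma^{-i}(M)$ there; once those are in place, the ``no positive self-extensions'' condition kills the inclusion immediately.
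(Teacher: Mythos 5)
Your argument is correct, and all the key steps are sound: the reduction (via the isomorphism (\ref{iso:proj})) of an arbitrary presilting object to a finitely generated module $M$, the identification $\Omega^i(M)\simeq \Sigma^{-i}(M)$ in $\mathbf{D}_{\rm sg}(\Lambda)$ coming from the triangles induced by the syzygy exact sequences, the transport of the ${\rm add}$-containment to a split monomorphism $M\hookrightarrow \bigoplus_{k=1}^{d}\Sigma^{k}(M)^{m_k}$ by additivity of the canonical functor (with the summand $\Lambda$ dying in $\mathbf{D}_{\rm sg}(\Lambda)$), and the final observation that presilting kills every component of this split inclusion, forcing ${\rm Id}_M=0$, hence $M\simeq 0$. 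Note, however, that the paper itself gives no proof of this proposition: it is stated purely as a citation of \cite[Proposition~3.5]{CLZZ}, so there is no in-text argument to compare yours with; your proof is the natural direct one that such a citation presumably encodes. A pleasant feature of your route is that it is entirely formal inside the triangulated category $\mathbf{D}_{\rm sg}(\Lambda)$ and does not invoke the Ext/Tor comparisons of Proposition~\ref{prop:les-3}, which the paper does need for the related Proposition~\ref{prop:SPC-ARC}; in particular your argument makes transparent why ultimately-closedness suffices without any Gorenstein hypothesis. The only points worth spelling out in a final write-up are the (routine) facts that presilting is preserved under shift and isomorphism, and that the syzygies appearing in the definition of ultimately-closed may be taken with respect to any projective presentations, since different choices agree up to projective summands and hence coincide in $\mathbf{D}_{\rm sg}(\Lambda)$.
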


\section{The singular Yoneda dg category}

In this section, we recall the singular Yoneda dg category from \cite{CW}, which provides another explicit dg enhancement for the singularity category; see Proposition~\ref{prop:sing-SY}.

We fix a semisimple artinian ring $E$ and a ring homorphism $E\rightarrow \Lambda$.  We will abbreviate $\otimes_E$ as $\otimes$.

\subsection{The bar and Yoneda dg categories}

 Denote $\overline \Lambda= \Lambda/(E 1_\Lambda)$, which is an $E$-$E$-bimodule. Its $1$-shift stalk complex $s\overline \Lambda$ is concentrated in degree $-1$, whose typical element $s\overline{a}$ has degree $-1$.

The normalized $E$-relative bar resolution $\mathbb{B}$ of $\Lambda$ is a complex of $\Lambda$-$\Lambda$-bimodules given as follows. As a graded $\Lambda$-$\Lambda$-bimodule, we have
$$\mathbb{B}=\Lambda\otimes T_E  (s\overline{\Lambda})\otimes   \Lambda,$$
where ${\rm deg}(a_0\otimes   s\overline{a}_{1, n}\otimes   a_{n+1})=-n$. Here, for simplicity, we write
$$s\overline{a}_{1, n} := s\overline{a}_1\otimes   s\overline{a}_2 \otimes  \cdots \otimes   s\overline{a}_n.$$
The differential $d$ is given such that $d(a_0\otimes   a_1)=0$ and that
\begin{align*}
d(a_0\otimes   s\overline{a}_{1, n}\otimes   a_{n+1})={}  & a_0a_1\otimes   s\overline{a}_{2,n}\otimes   a_{n+1}+ (-1)^n a_0\otimes   s\overline{a}_{1,n-1}\otimes   a_{n}a_{n+1}\\
&+\sum_{i=1}^{n-1} (-1)^i a_0\otimes   s\overline{a}_{1, i-1}\otimes   s\overline{a_ia_{i+1}}\otimes   s\overline{a}_{i+2, n}\otimes   a_{n+1}.
\end{align*}
Here and later, as usual, $s\overline{a}_{1, 0}$ and $s\overline{a}_{n+1, n}$ are understood to be the empty word and should be ignored. We mention that bar resolutions are due to \cite[Chapter~II]{EM} and \cite[Chapter~IX, Section~6]{CE}.

It is well known that $\mathbb{B}$ is a coalgebra in the monoidal category of  complexes of $\Lambda$-$\Lambda$-bimodules. To be more precise, we have a cochain map between complexes of $\Lambda$-$\Lambda$-bimodules
$$\Delta\colon \mathbb{B}\longrightarrow \mathbb{B}\otimes_\Lambda\mathbb{B}$$
given by
$$\Delta(a_0\otimes   s\overline{a}_{1, n}\otimes   a_{n+1})=\sum_{i=0}^n (a_0\otimes   s\overline{a}_{1,i}\otimes   1_\Lambda)\otimes_\Lambda (1_\Lambda\otimes   s\overline{a}_{i+1, n}\otimes   a_{n+1}).$$
The natural cochain map $$\varepsilon\colon  \mathbb{B}\longrightarrow \Lambda$$ is  given by the projection $\mathbb{B}\rightarrow \Lambda \otimes \Lambda$ followed by the multiplication map of $\Lambda$. We have the following coassociative property
$$(\Delta\otimes_\Lambda {\rm Id}_\mathbb{B})\circ \Delta = ({\rm Id}_\mathbb{B} \otimes_\Lambda \Delta)\circ \Delta$$
and the counital property
$$(\varepsilon\otimes_\Lambda {\rm Id}_\mathbb{B}) \circ \Delta={\rm Id}_\mathbb{B}=({\rm Id}_\mathbb{B}\otimes_\Lambda \varepsilon) \circ \Delta.$$
Here, in the identity above, we identify both $\Lambda\otimes_\Lambda \mathbb{B}$ and $\mathbb{B}\otimes_\Lambda \Lambda$ with $\mathbb{B}$.

Following the treatment in \cite[Subsection~6.6]{Kel94}, we define the \emph{$E$-relative bar dg category} $\mathcal{B}=\mathcal{B}_{\Lambda/E}$ as follows. The objects are precisely all the complexes of $\Lambda$-modules, and the morphism complex between two objects $X$ and $Y$ is given by the Hom-complex
$$\mathcal{B}(X, Y)={\rm Hom}_\Lambda(\mathbb{B}\otimes_\Lambda X, Y).$$
Here, we refer to Example~\ref{exm:Hom-complex} for Hom-complexes. The composition $\ast$ of two morphisms $f\in \mathcal{B}(X, Y)$ and $g\in \mathcal{B}(Y, Z)$ is defined to be
$$g\ast f := (\mathbb{B}\otimes_\Lambda X\xrightarrow{\Delta\otimes_\Lambda {\rm Id}_X} \mathbb{B}\otimes_\Lambda \mathbb{B}\otimes_\Lambda X\xrightarrow{{\rm Id}_\mathbb{B}\otimes_\Lambda f} \mathbb{B}\otimes_\Lambda Y\xrightarrow{g} Z).$$
Moreover, the identity endomorphism in $\mathcal{B}(X, X)$ is given by
$$\mathbb{B}\otimes_\Lambda X \xrightarrow{\varepsilon\otimes_\Lambda {\rm Id}_X} \Lambda\otimes_\Lambda X=X.$$

In what follows, we will unpack the above definition of $\mathcal{B}$ and obtain its alternative form.

The \emph{$E$-relative Yoneda dg category} $\mathcal{Y}=\mathcal{Y}_{\Lambda/E}$ has the same objects as $\mathcal{B}$ has. For two complexes $X$ and $Y$ of $\Lambda$-modules, the underlying graded $\mathbb{K}$-module of the morphism complex $\mathcal{Y}(X, Y)$ is given by an infinite product
$$\mathcal{Y}(X, Y)=\prod_{n\geq 0}{\rm Hom}_E((s\overline{\Lambda})^{\otimes   n}\otimes  X, Y).$$
We denote
$$\mathcal{Y}_n(X, Y): ={\rm Hom}_E((s\overline{\Lambda})^{\otimes   n}\otimes   X, Y),$$
and say that elements in $\mathcal{Y}_n(X, Y)$ are of \emph{ filtration-degree} $n$. Observe that $\mathcal{Y}_0(X, Y)={\rm Hom}_E(X, Y)$.  The differential $\delta$ of $\mathcal{Y}(X, Y)$ is determined by
$$\begin{pmatrix}\delta_{\rm in}\\
\delta_{\rm ex}\end{pmatrix}\colon \mathcal{Y}_n(X, Y)\longrightarrow \mathcal{Y}_n(X, Y)\oplus \mathcal{Y}_{n+1}(X, Y),$$
where
$$\delta_{\rm in}(f)(s\overline{a}_{1,n}\otimes   x)=d_Y (f(s\overline{a}_{1,n}\otimes   x))-(-1)^{|f|+n}f(s\overline{a}_{1,n}\otimes   d_X(x))$$
and
\begin{align*}
\delta_{\rm ex}(f)(s\overline{a}_{1,n+1}\otimes   x) = & (-1)^{|f|+1}a_1 f(s\overline{a}_{2,n+1}\otimes   x)+(-1)^{|f|+n}f(s\overline{a}_{1,n}\otimes   a_{n+1}x)\\
&+\sum_{i=1}^{n}(-1)^{|f|+i+1} f(s\overline{a}_{1, i-1}\otimes   s\overline{a_ia_{i+1}}\otimes   s\overline{a}_{i+2, n+1}\otimes   x).
\end{align*}
We translate the composition $\ast$ in $\mathcal{B}$ into a cup-type product $\odot$ as follows; compare \cite[Section~7]{Gers}. The composition $\odot$ of morphisms in $\mathcal{Y}$ is defined such that, for $f\in \mathcal{Y}_n(X, Y)$ and $g\in \mathcal{Y}_m(Y, Z)$, their composition $g\odot f\in \mathcal{Y}_{n+m}(X, Z)$ is given by
\begin{equation*}
(g\odot f)(s\overline{a}_{1, m+n}\otimes   x)=(-1)^{m|f|} g(s\overline{a}_{1, m}\otimes   f(s\overline{a}_{m+1, m+n}\otimes   x)).
\end{equation*}
The identity endomorphism in $\mathcal{Y}(X, X)$ is given by the genuine identity map ${\rm Id}_X\in \mathcal{Y}_0(X, X)\subseteq \mathcal{Y}(X, X)$.

The canonical isomorphism $$\mathcal{B}(X, Y)\simeq \mathcal{Y}(X, Y)$$
of complexes sends $\tilde{f}\in {\rm Hom}_\Lambda((\Lambda\otimes (s\overline{\Lambda})^{\otimes n}\otimes \Lambda) \otimes_\Lambda X, Y)$ to $f\in \mathcal{Y}_n(X, Y)$, which is given by
$$f(s\overline{a}_{1, n}\otimes x)=\tilde{f}((1\otimes s\overline{a}_{1, n}\otimes 1)\otimes_\Lambda x).$$
The isomorphisms above for all $X$ and $Y$  yield the following result; see \cite[Lemma~7.1]{CW}.

\begin{lem}
There is an isomorphism $\mathcal{B}_{\Lambda/E}\simeq \mathcal{Y}_{\Lambda/E}$ of dg categories.
\end{lem}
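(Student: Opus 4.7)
The plan is to construct the isomorphism objectwise on morphism complexes and then verify compatibility with differential, composition, and units; the isomorphism on objects is the identity.

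First I would establish the stated bijection as an isomorphism of graded $\mathbb{K}$-modules. Since $\mathbb{B}=\Lambda\otimes T_E(s\overline{\Lambda})\otimes\Lambda$ is a direct sum over $n\geq 0$ of free $\Lambda$-$\Lambda$-bimodules $\Lambda\otimes (s\overline{\Lambda})^{\otimes n}\otimes\Lambda$ with $E$-$E$-bimodule of generators $(s\overline{\Lambda})^{\otimes n}$, the tensor-Hom adjunction gives, degreewise in $n$,
\[
{\rm Hom}_\Lambda\bigl((\Lambda\otimes (s\overline{\Lambda})^{\otimes n}\otimes\Lambda)\otimes_\Lambda X,\, Y\bigr)\;\cong\;{\rm Hom}_E\bigl((s\overline{\Lambda})^{\otimes n}\otimes X,\, Y\bigr)=\mathcal{Y}_n(X,Y),
\]
with $\tilde f\mapsto f$ exactly as in the statement. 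Taking the product over $n\geq 0$ yields the claimed isomorphism $\mathcal{B}(X,Y)\simeq \mathcal{Y}(X,Y)$ of graded modules, and naturality in $X$ and $Y$ is automatic.

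Next I would check that this bijection intertwines the differentials. The differential on $\mathcal{B}(X,Y)$ is the standard Hom-complex differential with contributions from $d_\mathbb{B}\otimes_\Lambda {\rm Id}_X+{\rm Id}_\mathbb{B}\otimes_\Lambda d_X$ on the source and from $d_Y$ on the target. The $d_X$ and $d_Y$ pieces correspond exactly to $\delta_{\rm in}$, since they act on $x$ and on the output respectively, with the Koszul sign $(-1)^{|f|+n}$ dictated by moving $d_X$ past $f$ of total degree $|f|$ and past the word $s\overline a_{1,n}$ of degree $-n$. The piece coming from $d_\mathbb{B}$ translates through the adjunction into the three sums appearing in $\delta_{\rm ex}$: the outer terms $a_1\,f(\cdots)$ and $f(\cdots)a_{n+1}$ arise from the first and last terms of the formula for $d$ on $\mathbb{B}$, and the interior terms $f(\cdots s\overline{a_i a_{i+1}}\cdots)$ arise from its middle sum. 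Tracking signs is routine given that each $s\overline a_i$ carries degree $-1$.

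For composition and units, the key is the explicit coproduct
\[
\Delta(a_0\otimes s\overline a_{1,n}\otimes a_{n+1})=\sum_{i=0}^n (a_0\otimes s\overline a_{1,i}\otimes 1)\otimes_\Lambda (1\otimes s\overline a_{i+1,n}\otimes a_{n+1}).
\]
If $f$ has filtration-degree $n$ and $g$ filtration-degree $m$, then upon writing out $g\ast f=g\circ({\rm Id}_\mathbb{B}\otimes_\Lambda f)\circ(\Delta\otimes_\Lambda {\rm Id}_X)$ only the summand with left part of length $m$ and right part of length $n$ survives, because $f$ vanishes on bar components of the wrong length; this is exactly the formula for $g\odot f$, with the Koszul sign $(-1)^{m|f|}$ produced by moving the degree-$|f|$ map $f$ past the length-$m$ word $s\overline a_{1,m}$. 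Finally, the counit $\varepsilon\colon \mathbb{B}\to \Lambda$ is nonzero only on the $n=0$ summand $\Lambda\otimes\Lambda$, where it is multiplication; under the adjunction this sends the identity of $\mathcal{B}(X,X)$ to ${\rm Id}_X\in \mathcal{Y}_0(X,X)$.

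The main obstacle is the sign bookkeeping in the compatibility with $\delta_{\rm ex}$ and with $\odot$: one must carefully account for the degree $-1$ of each $s\overline a_i$, the total degree $|f|$ of a morphism in $\mathcal{Y}_n$, and the Koszul rule whenever a morphism crosses a bar word. Everything else is a straightforward unpacking of the adjunction $\Lambda\otimes_E(-)\dashv {\rm Hom}_\Lambda(\Lambda\otimes_E-,-)$ and of the explicit formulas for $d_\mathbb{B}$, $\Delta$, and $\varepsilon$.
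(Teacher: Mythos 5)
Your proposal is correct and follows essentially the same route as the paper: the canonical isomorphism $\mathcal{B}(X,Y)\simeq\mathcal{Y}(X,Y)$ obtained by unpacking the free bimodule structure of each bar component via the adjunction, with the differential, the cup-type composition $\odot$, and the unit matching the transported structures $d_{\mathbb{B}}$, $\Delta$, and $\varepsilon$ (the paper simply states this isomorphism and defers the verification to \cite[Lemma~7.1]{CW}, which you carry out explicitly).
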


Consider the natural dg functor
$${\rm \Theta}\colon C_{\rm dg}(\Lambda\mbox{-Mod})\longrightarrow \mathcal{Y}=\mathcal{Y}_{\Lambda/E}$$
which acts on objects by the identity, and identifies $f\in {\rm Hom}_\Lambda(X, Y)$ with $f\in {\rm Hom}_E(X, Y) = \mathcal{Y}_0(X, Y)\subseteq \mathcal{Y}(X, Y)$. Indeed, $C_{\rm dg}(\Lambda\mbox{-Mod})$ might be viewed as a non-full dg subcategory of $\mathcal{Y}$.

Denote by $\mathcal{Y}_{\Lambda/E}^f$ the full dg subcategory of $\mathcal{Y}_{\Lambda/E}$ formed by the complexes in $C^{-, b}(\Lambda\mbox{-proj})$.

The following result is  a finite version of  \cite[Proposition~7.3]{CW}; compare \cite[Corollary~7.5]{CW}.

\begin{prop}\label{prop:Theta}
The above dg functor $\Theta$ induces an isomorphism in $\mathbf{Hodgcat}$
 $$\Theta\colon C_{\rm dg}^{-,b}(\Lambda\mbox{-}{\rm proj})\simeq \mathcal{Y}^f_{\Lambda/E}.$$
 Consequently, $\mathcal{Y}^f_{\Lambda/E}$ is pretriangulated and we have an isomorphism
 $\mathbf{K}^{-, b}(\Lambda\mbox{-}{\rm proj})\simeq H^0(\mathcal{Y}^f_{\Lambda/E})$
 of triangulated categories.
\end{prop}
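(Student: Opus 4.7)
The plan is to prove that the dg functor $\Theta$ is a quasi-equivalence, whence the claimed isomorphism in $\mathbf{Hodgcat}$ is immediate from the definition of that category, and the consequence will follow from the general facts on pretriangulated dg categories recalled before Lemma~\ref{lem:CC}. Since $\Theta$ acts as the identity on objects and both $C_{\rm dg}^{-, b}(\Lambda\mbox{-proj})$ and $\mathcal{Y}^f_{\Lambda/E}$ have the same underlying class of objects, the induced functor $H^0(\Theta)$ is automatically essentially surjective (indeed surjective on objects). The whole content is therefore to show that $\Theta$ is quasi-fully faithful.

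Fix $X, Y \in C^{-, b}(\Lambda\mbox{-proj})$. Using the identification $\mathcal{Y}(X, Y) \simeq \mathcal{B}(X, Y) = {\rm Hom}_\Lambda(\mathbb{B}\otimes_\Lambda X, Y)$, the cochain map induced by $\Theta$ is the pullback
$$\iota_{X, Y}\colon {\rm Hom}_\Lambda(X, Y) \longrightarrow {\rm Hom}_\Lambda(\mathbb{B}\otimes_\Lambda X, Y),\quad f\mapsto f\circ(\varepsilon\otimes_\Lambda{\rm Id}_X)$$
along the augmentation $\varepsilon$. I would show that $\iota_{X, Y}$ is a quasi-isomorphism in two steps. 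First, the standard contracting homotopy for the normalized bar resolution (inserting $1\otimes s\overline{a}_0$ on the left) is right-$\Lambda$-linear, so after tensoring with $X$ over $\Lambda$ it furnishes an explicit contracting homotopy exhibiting $\varepsilon\otimes_\Lambda{\rm Id}_X\colon \mathbb{B}\otimes_\Lambda X \to X$ as a homotopy equivalence of complexes of abelian groups; in particular it is a quasi-isomorphism of complexes of left $\Lambda$-modules. Second, I would check that $\mathbb{B}\otimes_\Lambda X$ is K-projective in $C(\Lambda\mbox{-Mod})$: each degree is a finite direct sum of summands of the form $\Lambda\otimes_E(s\overline{\Lambda})^{\otimes n}\otimes_E X^j$ (the sum is finite because $\mathbb{B}$ is concentrated in non-positive degrees and $X$ is bounded above), each such summand is projective as a left $\Lambda$-module since $E$ is semisimple artinian, and the whole complex is bounded above. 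The complex $X$ itself is K-projective for the same bounded-above-of-projectives reason. Hence the cone of $\varepsilon\otimes_\Lambda{\rm Id}_X$ is an acyclic K-projective complex of left $\Lambda$-modules; applying ${\rm Hom}_\Lambda(-, Y)$ to it produces an acyclic complex, which gives the required quasi-isomorphism $\iota_{X, Y}$.

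Granting quasi-fully faithfulness together with the trivial essential surjectivity, $\Theta$ is a quasi-equivalence, hence represents an isomorphism in $\mathbf{Hodgcat}$. Since $C_{\rm dg}^{-, b}(\Lambda\mbox{-proj})$ is strongly pretriangulated and pretriangulated-ness transfers under quasi-equivalences (as recalled in the excerpt), $\mathcal{Y}^f_{\Lambda/E}$ is pretriangulated, and Lemma~\ref{lem:CC} then delivers the induced triangle equivalence $\mathbf{K}^{-, b}(\Lambda\mbox{-proj}) \simeq H^0(\mathcal{Y}^f_{\Lambda/E})$.

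The main obstacle I anticipate is the bookkeeping behind the two assertions in step one and step two: verifying that the contracting homotopy for $\mathbb{B}$ survives the tensor product $-\otimes_\Lambda X$ (it is only right-$\Lambda$-linear, not left), and verifying K-projectivity of $\mathbb{B}\otimes_\Lambda X$ when $\mathbb{B}$ itself is unbounded below. Both ultimately reduce to the explicit combinatorics of the $E$-relative bar resolution together with the semisimplicity of $E$, and the argument otherwise parallels the infinite-module analogue \cite[Proposition~7.3]{CW}.
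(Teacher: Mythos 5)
Your proposal is correct, and it fills in what the paper itself leaves to a citation: the paper gives no argument for Proposition~\ref{prop:Theta} beyond noting that it is a finite version of \cite[Proposition~7.3]{CW}, and your reduction --- identify $\mathcal{Y}(X,Y)$ with ${\rm Hom}_\Lambda(\mathbb{B}\otimes_\Lambda X, Y)$, observe that $\Theta$ becomes pullback along $\varepsilon\otimes_\Lambda{\rm Id}_X$, and show this augmentation is a quasi-isomorphism between suitably projective complexes --- is exactly the standard route behind that reference. One small streamlining: since $X$ and $\mathbb{B}\otimes_\Lambda X$ are both bounded-above complexes of projective $\Lambda$-modules (the latter by your semisimplicity-of-$E$ and finiteness-in-each-degree observations), the quasi-isomorphism $\varepsilon\otimes_\Lambda{\rm Id}_X$ is automatically a homotopy equivalence in $\mathbf{K}(\Lambda\mbox{-Mod})$, so ${\rm Hom}_\Lambda(-,Y)$ sends it to a homotopy equivalence for every $Y$; this bypasses the K-projectivity language and the step ``acyclic K-projective implies contractible'' that your write-up implicitly uses. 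The remaining bookkeeping you flag (right-$\Lambda$-linearity of the bar contracting homotopy, so that it survives $-\otimes_\Lambda X$) is genuine but routine, and the concluding appeal to Lemma~\ref{lem:CC} and invariance of pretriangulatedness under quasi-equivalence is exactly as the paper intends.
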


We mention that \cite[Proposition~7.3]{CW} justifies our terminology for $\mathcal{Y}_{\Lambda/E}$:  for each $\Lambda$-module $M$, the cohomology ring of the dg ring $\mathcal{Y}_{\Lambda/E}(M, M)$ is isomorphic to the \emph{Yoneda ring} of $M$:
$${\rm Ext}^*_{\Lambda}(M, M)=\bigoplus_{i\geq 0} {\rm Hom}_{\mathbf{D}(\Lambda\mbox{-}{\rm Mod})} (M, \Sigma^i(M)).$$

\subsection{Noncommutative differential forms}
We will study noncommutative differential forms with values in a complex; see \cite[Section~8]{CW}.  This gives rise to a dg endofunctor $\Omega_{\rm nc}$ on the Yoneda dg category $\mathcal Y=\mathcal{Y}_{\Lambda/E}$.  We will  obtain a closed natural transformation $\theta \colon {\rm Id}_{\mathcal Y} \to \Omega_{\rm nc}$ of degree zero satisfying $\theta \Omega_{\rm nc} = \Omega_{\rm nc} \theta$.

Let $X$ be a complex of $\Lambda$-modules. The complex of \emph{$X$-valued  noncommutative differential $1$-forms} is defined by
$$\Omega_{{\rm nc}, \Lambda/E}(X)=s\overline{\Lambda}\otimes X,$$
which is graded by means of ${\rm deg}(s\overline{a}_1\otimes   x)=|x|-1$ and whose differential is given by $d(s\overline{a}_1\otimes   x)=-s\overline{a}_1\otimes   d_X(x)$. The $\Lambda$-action on $\Omega_{{\rm nc}, \Lambda/E}(X)$ is given by the following nontrivial rule:
\begin{align}\label{equ:nontri-rule}
a\blacktriangleright (s\overline{a}_1\otimes   x)=s\overline{aa_1}\otimes   x-s\overline{a}\otimes   a_1x
\end{align}
for all $a\in \Lambda$.

To justify the  terminology above, we observe that
$$\Omega_{{\rm nc}, \Lambda/E}(\Lambda)=s\overline{\Lambda}\otimes   \Lambda$$
 is a stalk complex of $\Lambda$-$\Lambda$-bimodules concentrated in degree $-1$, where the right $\Lambda$-action is given by the multiplication of $\Lambda$. This stalk complex is called the \emph{graded bimodule of right noncommutative differential $1$-forms} \cite{Wan1}. Moreover, we have a canonical isomorphism
 $$\Omega_{{\rm nc}, \Lambda/E}(\Lambda)\otimes_\Lambda X\simeq \Omega_{{\rm nc}, \Lambda/E}(X), $$
 which sends $(s\overline{a}_0\otimes   a_1)\otimes_\Lambda x$ to $s\overline{a}_0\otimes   a_1x$. We mention that the study of noncommutative differential forms goes back to \cite[Sections~1~and~2]{CQ}.

 To avoid notational overload, we write $\Omega_{\rm nc}(X)=\Omega_{{\rm nc}, \Lambda/E}(X)$. We have a dg functor $$\Omega_{\rm nc}\colon \mathcal{Y}\longrightarrow \mathcal{Y}, \quad X\mapsto \Omega_{\rm nc}(X),$$
which sends a morphism $f\in \mathcal{Y}_n(X, Y)$ to the following morphism in $\mathcal{Y}_{n}(\Omega_{\rm nc}(X), \Omega_{\rm nc}(Y))$:
 $$  \xymatrix@C=3pc{
 (s\overline{\Lambda})^{\otimes   n}\otimes   \Omega_{\rm nc}(X) \ar[r]^-{=}  &  (s\overline{\Lambda})^{\otimes   (n+1)}\otimes   X \ar[r]^-{\mathrm{Id}_{s\overline{\Lambda}}\otimes   f} & s\overline{\Lambda}\otimes   Y=\Omega_{\rm nc}(Y).
 }$$

 We have a closed natural transformation of degree zero
 $$\theta\colon {\rm Id}_\mathcal{Y}\longrightarrow \Omega_{\rm nc}$$
defined as follows. For any $X \in \mathcal{Y}$, $\theta_X$ lies in $\mathcal{Y}_1(X, \Omega_{\rm nc}(X))\subseteq \mathcal{Y}(X, \Omega_{\rm nc}(X))$ and is given by
$$\theta_X(s\overline{a}\otimes   x)=s\overline{a}\otimes   x\in \Omega_{\rm nc}(X).$$
 Observe that $\theta_X$ is of degree zero and that $\delta(\theta_X)=0$ using the nontrivial rule (\ref{equ:nontri-rule}). Therefore, $\theta_X$ is a closed morphism of degree zero in $\mathcal{Y}$.  In order to prove that $\theta$ is natural,  we observe that for each $f\in \mathcal{Y}_n(X, Y)$, we have
$$\theta_Y\odot f=\Omega_{\rm nc}(f)\odot \theta_X.$$
Indeed, both sides send $s\overline{a}_{1, n+1}\otimes   x$ to $(-1)^{|f|}s\overline{a}_1\otimes   f(s\overline{a}_{2, n}\otimes   x)$.

 We observe that
 $$\Omega_{\rm nc}(\theta_X)=\theta_{\Omega_{\rm nc}(X)},$$
 since both sides lie in $\mathcal{Y}_1(\Omega_{\rm nc}(X), \Omega_{\rm nc}^2(X)) = \mathrm{Hom}  (\Omega_{\rm nc}^2(X), \Omega_{\rm nc}^2(X))$ and correspond to the identity map of $\Omega_{\rm nc}^2(X)$.

 We will see that both $\Omega_{\rm nc}$ and $\theta$ are closely related to the truncations of $\mathbb{B}$. For each $p\geq 0$, we consider the following subcomplex of $\mathbb{B}$:
$$\mathbb{B}_{<p}=\bigoplus_{0\leq n < p} \Lambda \otimes (s\bar{\Lambda})^{\otimes n}\otimes \Lambda.$$
Here, we understand $\mathbb{B}_{<0}$ as the zero complex, and $\mathbb{B}_{<1}$ as $\Lambda \otimes \Lambda=\Lambda \otimes E \otimes \Lambda$. The corresponding  quotient complex $\mathbb{B}/\mathbb{B}_{<p}$ will be denoted by $\mathbb{B}_{\geq p}$.

Let $X$ be a complex of $\Lambda$-modules and $p\geq 0$. We have a canonical  isomorphism
\begin{align}\label{equ:B-p}
\mathbb{B}_{\geq p}\otimes_\Lambda \Omega_{\rm nc}(X)\simeq \mathbb{B}_{\geq p+1}\otimes_\Lambda X
\end{align}
of complexes of $\Lambda$-modules, sending $(a_0\otimes s\bar{a}_{1, n}\otimes 1)\otimes_\Lambda (s\bar{a}_{n+1}\otimes x)$ to $(a_0\otimes s\bar{a}_{1, n+1}\otimes 1)\otimes_\Lambda x$ for $n\geq p$.

The quasi-isomorphism
$$\varepsilon\otimes {\rm Id}_X\colon \mathbb{B}\otimes_\Lambda X \longrightarrow X$$ is viewed as an element in $\mathcal{Y}_0(\mathbb{B}\otimes_\Lambda X, X)$, which is  a closed morphism  of degree zero in $\mathcal{Y}$. There is another closed morphism  of degree zero in $\mathcal{Y}$
\begin{align*}
\iota_X\colon X\longrightarrow \mathbb{B}\otimes_\Lambda X.
\end{align*}
For each $p\geq 0$, we define the entry $(\iota_X)_p\in \mathcal{Y}_p(X, \mathbb{B}\otimes_\Lambda X)$ by the following map:
$$(s\bar{\Lambda})^{\otimes p}\otimes X \longrightarrow \mathbb{B}^{-p}\otimes_\Lambda X\subseteq \mathbb{B}\otimes_\Lambda X, \quad s\bar{a}_{1, p}\otimes x\longmapsto (1\otimes s\bar{a}_{1, p}\otimes 1)\otimes_\Lambda x.$$
Then we set $\iota_X=((\iota_X)_p)_{p\geq 0}\in \mathcal{Y}(X, \mathbb{B}\otimes_\Lambda X)$. It is direct to verify the following identity  in $\mathcal{Y}$:
\begin{align*}
(\varepsilon\otimes_\Lambda {\rm Id}_X)\odot  \iota_X={\rm Id}_X.
\end{align*}
By \cite[Section~5]{CW2},  $\iota_X$ is an inverse of $\varepsilon\otimes_\Lambda {\rm Id}_X$ in $H^0(\mathcal{Y})$.

Denote by $\pi_0\colon \mathbb{B}\rightarrow \mathbb{B}_{\geq 1}=\mathbb{B}/ \mathbb{B}_{<1}$ the natural projection. The following lemma, due to \cite[Lemma~5.3]{CW2}, relates $\theta_X$ to  $\pi_0\otimes_\Lambda {\rm Id}_X$.

\begin{lem}\label{lem:comm-squ}
The following diagram
\[\xymatrix{
X \ar[r]^-{\theta_X} \ar[d]_-{\iota_X} & \Omega_{\rm nc}(X)\ar[d]^-{\iota_{\Omega_{\rm nc}(X)}}\\
\mathbb{B}\otimes_\Lambda X \ar[r] & \mathbb{B}\otimes_\Lambda \Omega_{\rm nc}(X)
}
\]
commutes in $\mathcal{Y}$, where the lower arrow is the composition of $\pi_0\otimes_\Lambda {\rm Id}_X$ with $\mathbb{B}_{\geq 1}\otimes_\Lambda X\rightarrow \mathbb{B}\otimes_\Lambda \Omega_{\rm nc}(X)$, the inverse of  the canonical isomorphism (\ref{equ:B-p}) with $p=0$.
\end{lem}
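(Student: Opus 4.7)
The plan is to verify commutativity of the diagram by a direct calculation in the Yoneda dg category $\mathcal{Y}$, comparing the two compositions filtration-degree by filtration-degree. Morphisms in $\mathcal{Y}(X, \mathbb{B}\otimes_\Lambda \Omega_{\rm nc}(X))$ lie in the infinite product $\prod_{p\geq 0}\mathcal{Y}_p(X, \mathbb{B}\otimes_\Lambda \Omega_{\rm nc}(X))$, so it suffices to check that the $\mathcal{Y}_p$-components of the two sides agree for every $p\geq 0$.

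First I would unpack the top-right composition $\iota_{\Omega_{\rm nc}(X)}\odot \theta_X$. Since $\theta_X \in \mathcal{Y}_1$ has total degree zero, the cup-type formula for $\odot$ forces the $\mathcal{Y}_0$-component to vanish, and it identifies the $\mathcal{Y}_{p+1}$-component with $(\iota_{\Omega_{\rm nc}(X)})_p \odot \theta_X$. Plugging in the defining formulas for $\theta_X$ and $(\iota_{\Omega_{\rm nc}(X)})_p$, an element $s\overline{a}_{1, p+1}\otimes x$ is sent (with trivial sign, as $|\theta_X|=0$) to
\[(1\otimes s\overline{a}_{1, p}\otimes 1)\otimes_\Lambda (s\overline{a}_{p+1}\otimes x) \in \mathbb{B}^{-p}\otimes_\Lambda \Omega_{\rm nc}(X).\]

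Next I would compute the bottom-left composition. The lower arrow is a genuine cochain map of $\Lambda$-modules, hence lies in filtration-degree $0$ of $\mathcal{Y}$; consequently its composition with $\iota_X$ picks up only the component $(\iota_X)_p$ at filtration-degree $p$. The $\mathcal{Y}_0$-component vanishes because $(\iota_X)_0$ sends $X$ into $\mathbb{B}^0\otimes_\Lambda X$, which is killed by $\pi_0\otimes_\Lambda {\rm Id}_X$, matching the top-right. For $p+1\geq 1$, the map $(\iota_X)_{p+1}$ sends $s\overline{a}_{1, p+1}\otimes x$ to $(1\otimes s\overline{a}_{1, p+1}\otimes 1)\otimes_\Lambda x \in \mathbb{B}^{-(p+1)}\otimes_\Lambda X$; then $\pi_0\otimes_\Lambda {\rm Id}_X$ leaves this unchanged (as $p+1\geq 1$), and the inverse of the isomorphism (\ref{equ:B-p}) with $p=0$ transports it to $(1\otimes s\overline{a}_{1, p}\otimes 1)\otimes_\Lambda (s\overline{a}_{p+1}\otimes x)$, which is exactly the expression produced on the other side.

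The main obstacle is purely organizational: one must correctly track how the filtration-degree components of $\iota_{\Omega_{\rm nc}(X)}$, $\iota_X$, $\theta_X$, and the bottom cochain map interact through $\odot$, and invert (\ref{equ:B-p}) with the correct index shift. No subtle sign cancellation occurs, because both $\theta_X$ and the lower arrow are of total degree zero; once the filtration structure is laid out, the two sides coincide term by term, yielding the claimed identity of morphisms in $\mathcal{Y}$ (and in particular in $H^0(\mathcal{Y})$).
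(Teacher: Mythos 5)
Your componentwise verification is correct: for the top composite the cup-product formula with $|\theta_X|=0$ gives $(\iota_{\Omega_{\rm nc}(X)})_p\odot\theta_X\colon s\overline{a}_{1,p+1}\otimes x\mapsto (1\otimes s\overline{a}_{1,p}\otimes 1)\otimes_\Lambda(s\overline{a}_{p+1}\otimes x)$, while for the bottom composite the filtration-degree-zero lower arrow applied to $(\iota_X)_{p+1}$ yields exactly the same element after $\pi_0\otimes_\Lambda{\rm Id}_X$ and the inverse of (\ref{equ:B-p}) with $p=0$, and both $\mathcal{Y}_0$-components vanish (the top because $\theta_X$ has filtration degree $1$, the bottom because $\pi_0$ kills $\mathbb{B}_{<1}\otimes_\Lambda X$), with all signs trivial since both $\theta_X$ and the lower arrow have total degree zero. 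Note that the paper itself gives no proof of this lemma but only cites \cite[Lemma~5.3]{CW2}, so your direct calculation is a legitimate self-contained substitute and is the natural argument one would expect in that reference.
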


 \subsection{The singular Yoneda dg category and dg singularity category}

  Associated to the triple $(\mathcal{Y}, \Omega_{\rm nc}, \theta)$ and using \cite[Section~6]{CW}, we form  a strict dg localization along $\theta$:
$$\iota\colon \mathcal{Y}\longrightarrow \mathcal{SY}_{\Lambda/E}.$$
The obtained dg category $\mathcal{SY}=\mathcal{SY}_{\Lambda/E}$ is called the \emph{$E$-relative singular Yoneda dg category} of $\Lambda$.

Let us describe $\mathcal{SY}$ explicitly. Its objects are just complexes of $\Lambda$-modules. For two objects $X$ and $Y$, the morphism complex is defined to be the colimit of the following sequence of cochain complexes.
$$\mathcal{Y}(X, Y)\longrightarrow \mathcal{Y}(X, \Omega_{\rm nc}(Y))\longrightarrow \cdots \longrightarrow \mathcal{Y}(X, \Omega_{\rm nc}^p(Y))\longrightarrow \mathcal{Y}(X, \Omega_{\rm nc}^{p+1}(Y))\longrightarrow \cdots$$
The structure map sends $f$ to $\theta_{\Omega_{\rm nc}^p(Y)}\odot f$. More precisely, for any $f \in  \mathcal{Y}_n(X, \Omega_{\rm nc}^p(Y))$, the map $ \theta_{\Omega_{\rm nc}^p(Y)}\odot f \in  \mathcal{Y}_{n+1}(X, \Omega_{\rm nc}^{p+1}(Y))$ is given by
\begin{align*}
s \overline{a}_{1, n+1} \otimes   x \longmapsto (-1)^{|f|} s\overline{a}_1 \otimes   f(s\overline{a}_{2, n+1}\otimes   x).
\end{align*}

The image of $f\in \mathcal{Y}(X, \Omega_{\rm nc}^p(Y))$ in $\mathcal{SY}(X, Y)$ is denoted by $[f;p]$. The composition $\odot_{\rm sg} $ of $[f;p]$ with $[g;q]\in \mathcal{SY}(Y, Z)$ is defined by
$$[g;q]\odot_{\rm sg} [f;p]=[\Omega_{\rm nc}^p(g)\odot f;p+q].$$
Recall from Proposition~\ref{prop:Theta}  the Yoneda dg category $\mathcal{Y}$ is pretriangulated. By a general result \cite[Lemma~6.3]{CW},  the dg category $\mathcal{SY}$ is also pretriangulated.

We denote by $\mathcal{SY}^f_{\Lambda/E}$ the full dg subcategory of $\mathcal{SY}_{\Lambda/E}$ formed by the complexes in $C^{-, b}(\Lambda\mbox{-proj})$.

The following result is a finite version of \cite[Proposition~9.1]{CW}; compare \cite[Corollary~9.3]{CW}.

\begin{prop}\label{prop:sing-SY}
Keep the notation as above. Then the composite dg functor $C^{-,b}_{\rm dg}(\Lambda\mbox{-}{\rm proj}) \stackrel{\Theta} \rightarrow \mathcal{Y}_{\Lambda/E}\stackrel{\iota}\rightarrow \mathcal{SY}_{\Lambda/E}$ induces an isomorphism in $\mathbf{Hodgcat}$:
$$\mathbf{S}_{\rm dg}(\Lambda)\simeq \mathcal{SY}_{\Lambda/E}^f.$$
Consequently, we have an isomorphism  $\mathbf{D}_{\rm sg}(\Lambda)\simeq H^0(\mathcal{SY}_{\Lambda/E}^f)$ of triangulated categories.
\end{prop}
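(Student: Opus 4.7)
The plan is to reduce the statement to a comparison between a dg quotient and a strict dg localization, using the already-established quasi-equivalence $\Theta$ from Proposition~\ref{prop:Theta}. Under $\Theta$, the full dg subcategory $C^b_{\rm dg}(\Lambda\mbox{-proj})$ is sent to the full dg subcategory $\mathcal{Y}^b_{\Lambda/E}$ of $\mathcal{Y}^f_{\Lambda/E}$ spanned by bounded complexes. Since $\Theta$ is invertible in $\mathbf{Hodgcat}$ and dg quotients respect quasi-equivalences, Lemma~\ref{lem:dgquot} (with care for the semi-free resolution step) gives an isomorphism
\[
\mathbf{S}_{\rm dg}(\Lambda)\;=\;C_{\rm dg}^{-,b}(\Lambda\mbox{-proj})/C_{\rm dg}^{b}(\Lambda\mbox{-proj})\;\simeq\;\mathcal{Y}^f_{\Lambda/E}/\mathcal{Y}^b_{\Lambda/E}
\]
in $\mathbf{Hodgcat}$. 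It then suffices to identify the right-hand side with $\mathcal{SY}^f_{\Lambda/E}$.

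The second step is to show that $\iota\colon \mathcal{Y}^f_{\Lambda/E}\to \mathcal{SY}^f_{\Lambda/E}$ contracts every bounded complex. For $Y\in C^b(\Lambda\mbox{-proj})$, I would argue that $[\mathrm{Id}_Y;0]=0$ in $H^0(\mathcal{SY}(Y,Y))$ once $p$ is large. The decisive tool is Lemma~\ref{lem:comm-squ}, which, after composing with the quasi-inverse $\varepsilon\otimes_\Lambda\mathrm{Id}$ of $\iota_X$, identifies $\theta_X$ in $H^0(\mathcal{Y})$ with the truncation $\mathbb{B}\otimes_\Lambda X\to \mathbb{B}_{\geq 1}\otimes_\Lambda X$. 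Iterating via the canonical isomorphism (\ref{equ:B-p}), the $p$-fold composite $\theta_Y^p$ corresponds to $\mathbb{B}\otimes_\Lambda Y\to \mathbb{B}_{\geq p}\otimes_\Lambda Y$; for $p$ exceeding the amplitude of $Y$ the target is acyclic, which forces $\mathrm{Id}_Y$ to vanish in the colimit defining $\mathcal{SY}(Y,Y)$.

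By the universal property of dg quotient categories, the composite $\iota\circ\Theta$ factors through $\mathcal{Y}^f_{\Lambda/E}/\mathcal{Y}^b_{\Lambda/E}$, producing a dg quasi-functor $\bar{\Phi}$ acting on objects as the identity. By Lemma~\ref{lem:CC}, it is enough to show that $H^0(\bar{\Phi})$ is a triangle equivalence. Essential surjectivity is automatic; for fully-faithfulness I would compare Hom-groups on both sides. Via Proposition~\ref{prop:Theta} and Lemma~\ref{lem:quot}(1), the source Hom becomes $\operatorname*{colim}_{n}\mathrm{Hom}_{\mathbf{K}(\Lambda\mbox{-proj})}(P,\sigma_{\leq -n}(Q))$, while by construction the target is $\operatorname*{colim}_{p} H^0\mathcal{Y}(P,\Omega_{\rm nc}^p(Q))$. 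The bridge between the two is again supplied by (\ref{equ:B-p}) and the quasi-isomorphism $\mathbb{B}\otimes_\Lambda Q\simeq Q$, which together identify $\Omega_{\rm nc}^p(Q)$, in $H^0(\mathcal{Y})$, with a model for the brutal truncation $\sigma_{\leq -p}(Q)$.

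The main obstacle is this last matching of colimits: one must check compatibly, in $p$, that the structure maps of the colimit defining $\mathcal{SY}(P,Q)$ (given by pre-composition with $\theta$) correspond under the identifications to the projections $\sigma_{\leq -p}(Q)\to \sigma_{\leq -(p+1)}(Q)$ of Lemma~\ref{lem:quot}(1). This requires unwinding the non-trivial $\Lambda$-action (\ref{equ:nontri-rule}) on noncommutative differential forms and relating it term-by-term to the canonical triangles~(\ref{tri:can}); once this compatibility is in place, the induced map on colimits is an isomorphism and the proof is complete.
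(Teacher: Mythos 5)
There is a genuine gap, and it sits exactly where the real work is. First, a point of reference: the paper offers no argument for this proposition at all — it is imported wholesale as ``a finite version of \cite[Proposition~9.1]{CW}'' (compare \cite[Corollary~9.3]{CW}), so the content you are trying to reprove is precisely the Hom-complex comparison carried out in \cite{CW, CW2}. Your architecture (transport the dg quotient along $\Theta$, show bounded complexes become contractible under $\iota$, factor through the dg quotient by its universal property, then check that $H^0$ of the induced quasi-functor is an equivalence) is reasonable, but the two computations on which it rests are wrong as stated, and the decisive one is left open. (a) The complex $\mathbb{B}_{\geq p}\otimes_\Lambda Y$ is \emph{not} acyclic for $Y$ bounded: $\mathbb{B}_{\geq p}$ is quasi-isomorphic to the shifted $E$-relative $p$-th syzygy bimodule of $\Lambda$, so $\mathbb{B}_{\geq p}\otimes_\Lambda Y$ has nonzero cohomology around degree $-p$ (already for $Y=\Lambda$). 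The correct reason ${\rm Id}_Y$ dies is a degree argument: $\mathbb{B}\otimes_\Lambda Y\rightarrow Y$ is a quasi-isomorphism between bounded-above complexes of projectives, hence a homotopy equivalence, so $H^n\mathcal{Y}(Y,\Omega_{\rm nc}^p(Y))\simeq {\rm Hom}_{\mathbf{K}(\Lambda\text{-}{\rm Mod})}(Y,\Sigma^n\Omega_{\rm nc}^p(Y))$, and this vanishes once $p$ exceeds the amplitude of $Y$ plus $|n|$ because $\Omega_{\rm nc}^p(Y)$ is concentrated in degrees $\leq b-p$; so the conclusion of your second step is true but for a different reason than the one you give.

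(b) More seriously, your bridge for fully-faithfulness fails at the level at which you invoke it: $\Omega_{\rm nc}^p(Q)$ is \emph{not} isomorphic in $H^0(\mathcal{Y})$ to a model of $\sigma_{\leq -p}(Q)$. Via (\ref{equ:B-p}) it is identified with $\mathbb{B}_{\geq p}\otimes_\Lambda Q$, i.e.\ with a shifted \emph{relative syzygy} of $Q$; for $Q=\Lambda$ this is a nonzero projective module placed in degree $-p$, whereas $\sigma_{\leq -p}(\Lambda)=0$. The two only agree after passing to the singularity quotient — which is the statement being proved, so the identification cannot be used as an input. Consequently the term-by-term replacement of $\Omega_{\rm nc}^p(Q)$ by $\sigma_{\leq -p}(Q)$ inside $H^*\mathcal{Y}(P,-)$ is illegitimate, and the comparison of the two colimits (with structure maps $\theta\odot-$ on one side and the truncation projections of Lemma~\ref{lem:quot}(1) on the other), which you yourself flag as the ``main obstacle,'' is exactly the unproved core. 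Without it, your argument only produces a dg quasi-functor $\mathbf{S}_{\rm dg}(\Lambda)\rightarrow \mathcal{SY}^f_{\Lambda/E}$ acting as the identity on objects; it does not show it is an isomorphism in $\mathbf{Hodgcat}$. That missing computation — identifying $H^*\mathcal{SY}(P,Q)$ with Hom groups in the singularity category, compatibly with the stabilization maps — is the substance of \cite[Proposition~9.1]{CW} (resting on the technical input from \cite{CW2}), and any self-contained proof must supply it, for instance by showing that the cone of $\mathbb{B}\otimes_\Lambda Q\rightarrow \mathbb{B}_{\geq p}\otimes_\Lambda Q$ is, up to homotopy, a bounded complex of projectives (using semisimplicity of $E$ to contract $Q$ as an $E$-complex onto its cohomology) and then comparing the resulting towers in the Verdier quotient.
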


\begin{rem}
(1) We mention that \cite[Proposition~9.1]{CW} justifies our terminology for $\mathcal{SY}_{\Lambda/E}$:  for each $\Lambda$-module $M$, the cohomology ring of the dg endomorphism ring $\mathcal{SY}_{\Lambda/E}(M, M)$ is isomorphic to the \emph{singular Yoneda ring} of $M$:
$$\widehat{\rm Ext}^*_{\Lambda}(M, M)=\bigoplus_{i\in \mathbb{Z}} {\rm Hom}_{\mathbf{D}'_{\rm sg}(\Lambda)} (M, \Sigma^i(M)).$$
 Here, $\mathbf{D}'_{\rm sg}(\Lambda)$ is the big singularity category of $\Lambda$ defined in Remark~\ref{rem:bigsingularitycategory}.

(2) Assume that $\Lambda$ is an artin algebra with a decomposition $\Lambda=E\oplus J$, where $J$ is the Jacobson radical of $\Lambda$. Using Proposition~\ref{prop:sing-SY} and \cite[Theorem~9.5]{CW}, one relates $\mathbf{S}_{\rm dg}(\Lambda)$ to the dg perfect derived category of the corresponding \emph{dg Leavitt algebra}. We mention that Leavitt algebras appears already  in \cite{Leav}.
\end{rem}

Combining Propositions~\ref{prop:sing-V} and \ref{prop:sing-SY}, we obtain the following result.

\begin{cor}
There is an isomorphism $\mathcal{V}^{-, b}(\Lambda\mbox{-}{\rm proj})\simeq \mathcal{SY}^f_{\Lambda/E}$ in ${\bf Hodgcat}$.
\end{cor}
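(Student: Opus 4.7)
The proof is essentially a two-line composition of isomorphisms in $\mathbf{Hodgcat}$, so the plan is straightforward and involves no genuine obstacle.

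The plan is to take $R=\Lambda$ in Proposition~\ref{prop:sing-V}, which supplies an isomorphism
\[
\mathbf{S}_{\rm dg}(\Lambda)\simeq \mathcal{V}^{-, b}(\Lambda\mbox{-}{\rm proj})
\]
in $\mathbf{Hodgcat}$, and then to invoke Proposition~\ref{prop:sing-SY}, which supplies an isomorphism
\[
\mathbf{S}_{\rm dg}(\Lambda)\simeq \mathcal{SY}^f_{\Lambda/E}
\]
in the same homotopy category of small dg categories. Composing the first isomorphism with the inverse of the second in $\mathbf{Hodgcat}$ yields the desired isomorphism $\mathcal{V}^{-, b}(\Lambda\mbox{-}{\rm proj})\simeq \mathcal{SY}^f_{\Lambda/E}$.

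The only subtle point I would double-check is that both dg categories are indeed small, so that the composition takes place inside $\mathbf{Hodgcat}$ as defined (which has small Hom sets by the Dwyer-Kan model structure \cite{Tab}). Since $\Lambda$ is an artin algebra, one may choose a small skeleton of $\Lambda\mbox{-}{\rm proj}$, and both $\mathcal{V}^{-, b}(\Lambda\mbox{-}{\rm proj})$ and $\mathcal{SY}^f_{\Lambda/E}$ are then small dg categories by construction. No further hard step is needed, and there is no main obstacle.
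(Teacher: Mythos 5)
Your proof matches the paper's: the corollary is stated there precisely as the combination of Propositions~\ref{prop:sing-V} and \ref{prop:sing-SY}, i.e.\ composing the two isomorphisms with $\mathbf{S}_{\rm dg}(\Lambda)$ in ${\bf Hodgcat}$, exactly as you do. Your remark about smallness (passing to a skeleton of $\Lambda$-proj) is a reasonable precaution and does not change the argument.
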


\begin{rem}
It seems  possible to construct a genuine dg functor $\mathcal{SY}^f_{\Lambda/E}\rightarrow\mathcal{V}^{-, b}(\Lambda\mbox{-}{\rm proj})$ inducing the isomorphism above. Roughly speaking, it sends a complex $X$ to its dg-projective resolution $\mathbb{B}\otimes_\Lambda X$. The details rely on a technical result \cite[Proposition~5.5]{CW2},  and  will appear elsewhere.
\end{rem}

\noindent{\bf Acknowledgements}.\quad We are very grateful to the referee for many helpful comments, and to Hongxing Chen, Bernhard Keller and Henning Krause for useful suggestions. X.W. thanks Luchezar Avramov, who encouraged him to write up the material concerning the Vogel dg category in a conference held in Utah, 2018. X.W. thanks the organizes of the twentieth ICRA for the opportunity to present these results.  The work is supported by  the National Natural Science Foundation of China (No.s 12325101, 12131015, and 12161141001) and the DFG grant (WA 5157/1-1).

\bibliography{}

\vskip 10pt

 {\footnotesize \noindent  Xiao-Wu Chen\\
 Key Laboratory of Wu Wen-Tsun Mathematics, Chinese Academy of Sciences,\\
 School of Mathematical Sciences, University of Science and Technology of China, Hefei 230026, Anhui, PR China\\

 \noindent Zhengfang Wang\\
 Department of Mathematics, Nanjing University, Nanjing 210093, Jiangsu, PR China\\
\vskip -5pt
 \noindent Institute of Algebra and Number Theory, University of Stuttgart\\
Pfaffenwaldring 57, 70569 Stuttgart, Germany \\
 }

\end{document}